\newcommand{\leqdr}{\mathbin{\rotatebox[origin=c]{-45}{$\leq$}}}
\newcommand{\lequr}{\mathbin{\rotatebox[origin=c]{45}{$\leq$}}}
\theoremstyle{plain}
\theoremstyle
{plain}
\newtheorem{theorem}{Theorem}[section]
\newtheorem{proposition}[theorem]{Proposition}
\newtheorem{lemma}[theorem]{Lemma}
\newtheorem{corollary}[theorem]{Corollary}
\newtheorem{question}[theorem]{Question}
\newtheorem{claim}[theorem]{Claim}
\theoremstyle{definition}
\newtheorem{definition}[theorem]{Definition}
\newtheorem{remark}[theorem]{Remark}
\newcommand{\N}{\mathbb{N}}
\newcommand{\Z}{\mathbb{Z}}
\newcommand{\Q}{\mathbb{Q}}
\newcommand{\R}{\mathbb{R}}
\newcommand{\QQ}{\mathcal Q}
\DeclareMathOperator{\diam}{diam}
\DeclareMathOperator{\PPS}{PPS}
\DeclareMathOperator{\Iso}{Isom}
\DeclareMathOperator{\den}{den}
\DeclareMathOperator{\Vol}{Vol}
\DeclareMathOperator{\Latt}{Latt}
\DeclareMathOperator{\dist}{dist}
\newcommand{\EH}{d_{EH}}
\newcommand{\GH}{d_{GH}}
\newcommand{\GB}{d_{GB}}
\newcommand{\EB}{d_{EB}}
\newcommand{\Addresses}{{
		\bigskip
		\footnotesize
		
		\noindent A.~Garber \\ \textsc{University of Texas Rio Grande Valley, Brownsville, TX, USA}\\
		\textit{E-mail address}: \texttt{alexey.garber@utrgv.edu}\\

		\medskip
  
        \noindent \v{Z}.~Virk, \textsc{University of Ljubljana and institute IMFM, Ljubljana, Slovenia}\\
		\textit{E-mail address}: \texttt{ziga.virk@fri.uni-lj.si}\\
		
		\medskip
		
		\noindent N.~Zava,  \textsc{Institute of Science and Technology Austria (ISTA), Klosterneuburg, Austria}\\
		\textit{E-mail address}: \texttt{nicolo.zava@gmail.com}
}}
\author{Alexey Garber, \v{Z}iga Virk, Nicol\`o Zava}
	\title{On the metric spaces of lattices and periodic point sets}
\begin{document}
\maketitle

\tableofcontents

\begin{abstract}
    Lattices and periodic point sets are well known objects from discrete geometry. They are also used in crystallography as one of the models of atomic structure of periodic crystals. In this paper we study the embedding properties of spaces of lattices and periodic point sets equipped with optimal bijection metrics (i.e., bottleneck and Euclidean bottleneck metrics). We focus our treatment on embeddings into Hilbert space. On one hand this is motivated by modern data analysis, which is mostly based on statistical approaches developed on Euclidean on Hilbert spaces, hence such embeddings play a major role in applied pipelines. On the other hand there is a well-established methodology related to such questions in coarse geometry, arising from the work on the Novikov conjecture. The main results of this paper provide different conditions, under which the spaces of lattices or periodic point sets are Lipschitz or coarsely (non)embeddable into Hilbert space. The various conditions are phrased in terms of density, packing radius, covering radius, the cardinality of the motif, and the diameter of the unit cell. 
\end{abstract}

\noindent MSC2020: 51F30, 
46B85, 54B20, 52C07. 

\noindent Keywords: Lattices, periodic point sets, bi-Lipschitz embeddings, coarse embeddings, bottleneck distance, Euclidean bottleneck distance.

\section{Introduction}

One of the fundamental problems in mathematics is that of embedding: given a collection of objects, can we embed it into a more manageable space? Such embeddings often reveal the intricate nature of the objects and lead to further applications. Some of the more prominent examples of this line of thought are the planarity of graphs, embedding dimensions of manifolds, representation theory, and partial solutions to the Novikov conjecture via coarse embeddings. Recent advances in data analysis have led to a particular interest in embeddings into Hilbert space, where tools of statistics can be readily applied. 

In this paper, we focus on embedding problems for the family of lattices and periodic point sets equipped with a certain metric. A lattice in $\mathbb{R}^d$ is a linearly transformed standard integer lattice $\mathbb{Z}^d \subset \mathbb{R}^d$, while a periodic point set is the Minkowski sum of a lattice and a finite set called motif. Both concepts feature prominently in discrete geometry and crystallography as they can be treated as an orbit or a union of finitely many orbits of a crystallographic group, i.e. a group of isometries of $\R^d$ with compact fundamental domain. 

The systematic study of crystallographic or space groups and their orbits goes back to works of Barlow, Fedorov and Sch\"onflies who enumerated all 230 of them in $\R^3$, see \cite{CDHT01}. The structural questions of crystallographic groups were a part of Hilbert's 18th problem which asked whether every $d$-dimensional crystallographic group contains a $d$-dimensional subgroup of translations of finite index. This question was positively resolved by Bieberbach \cite{Bie1,Bie2}.

The discovery of x-ray crystallography confirmed that the atoms crystals are usually arranged as periodic point sets, see \cite{Sen}. However, the crystallization process of self-organization of atoms still remains as one of the fundamental problems in crystallography \cite{Rad}.

In our approach, we adopt the following technical setup. 
The metrics used on the spaces of lattices and periodic point sets are the bottleneck and the Euclidean bottleneck distances (see Definition \ref{DefDB}), which measure ``optimal'' bijections between sets. We are interested in bi-Lipschitz and coarse embeddings (see Subsection \ref{sec:results} for the definitions) into Hilbert space. 

The \textbf{main results} of this paper are the following. 

\begin{enumerate}
 \item \emph{Lipschitz non-embeddability}. The space of lattices in $\mathbb{R}^d$ of prescribed density $\kappa$ is not separable even if we bound the packing radius from below and covering radius from above (Theorem \ref{theo:uncountable_family}). Hence it is not Lipschitz embeddable into a separable Hilbert space, and neither is the space of similarly restricted periodic points sets.
 \item \emph{Coarse non-embeddability}. The space of periodic point sets in $\mathbb{R}^d$ of prescribed density $\kappa$ does  not coarsely embed into Hilbert space even if we bound the packing radius from below, or covering radius from above
 (Theorem \ref{theo:no_coarse_embedding}).
 \item \emph{Coarse embeddability}. The space of periodic point sets in $\mathbb{R}^d$ of prescribed density $\kappa$ does coarsely embed into Hilbert space if we bound the covering radius and the cardinality of motif from above  (Theorem \ref{theo:bounded_motif}), or if we bound the diameter of the fundamental cell from above (Theorem \ref{theo:bounded_cell}).
\end{enumerate}

The main results and their consequences are summarized in Table \ref{table:embeddability_results}. Throughout the paper we also prove a number of secondary technical results, which should be of independent interest. These include the local geometric structure of periodic point sets in Section \ref{sect:BMS} and the existence of minimizing isometry for bottleneck matching in Appendix. 

\textbf{Related work.}
The bottleneck distance, or {\it bounded distance equivalence}, for various representatives of the family of the periodic sets and for a wider family of Delone sets (see Figure \ref{fig:p_and_c} below), including quasiperiodic sets, has been studied before. Particularly, the bijection witnessing the bottleneck distance between two Delone sets can be treated as a bounded transport between discrete measures originating from the sets.

Let us mention a few results concerning the periodic point sets. Duneau and Oguey \cite{DunOgu} proved that every two lattices of the same density are at finite bottleneck distance; the property also extends to two periodic point sets of equal density. Laczkovich \cite{Lac} established a criterion when an arbitrary (discrete) point set in $\R^d$ is at finite bottleneck distance from a lattice. We also note that for hyperbolic spaces $\mathbb H^d$, every two Delone sets are at finite bottleneck distance due to Bogopolski \cite{Bog}.

Additionally we mention several works that study bottleneck distance between lattices and (weighted or unweighted) quasiperiodic Delone sets obtained by cut-and-project scheme \cite{HayKoi, HKK, Haynes, FG-weighted} and by substitutions techniques \cite{Sol,FG-pisot}. The former setting resembles connections with Diophantine approximations, equidistributions of fractional parts of $\alpha n$ where $\alpha\in \R\setminus \Q$, and bounded remainder sets. 

The bottleneck distance for periodic point sets has been also used in \cite{EdeHeiKurSmiWin} as an application to develop a density fingerprint that helps to identify isometric periodic point sets.

The results mentioned above concern some distinguished representatives of the metric space of the periodic point sets while we are concentrating on the global structure of that space and its relation to coarse geometry.

Coarse geometry arose from the study of asymptotic properties of groups and was to a large degree motivated by the work of Gromov \cite{Grom}. Coarse embeddings gained prominence as their role in the Novikov conjecture became apparent \cite{Yu}, see also \cite{DraGonLafYu, Roe}. In the context of topological data analysis there have been many attempts to embed persistence diagrams into Hilbert space. It was proved that there are no bi-Lipschitz embeddings \cite{CarBau}, see also \cite{BubWag}. On the other hand, coarse embeddability of persistence diagrams on $n$ points was established in \cite{MitVir}. This result was later refined in \cite{BatGar}, where the authors provide a concrete bi-Lipschitz embedding. On a similar note, certain non-embeddability results for hyperspaces were proved in \cite{WeiYamZav} and \cite{Za_GH}.

\textbf{Structure of the paper.} In Section \ref{sec2} we provide preliminaries. The main results are stated in a structured way in Subsection \ref{sec:results}. Boundedness results and consequent embeddability results are provided in Section \ref{sect:BMS}. In Section \ref{sec:bi-Lipschitz} we provide non-separability results and consequence bi-Lipschitz non-embeddability results. In Section \ref{sec:coarse} we prove non-embeddability results in the context of coarse geometry. A proof of the existence of minimizing isometries of the bottleneck and Euclidean bottleneck distances is provided in Appendix \ref{appendix}. 

{\bf Acknowledgements.} A.G. was partially supported by the Alexander von Humboldt Foundation. \v{Z}.V. was supported by Slovenian Research Agency grants No. N1-0114, J1-4001, and P1-0292. N.Z. was supported by the FWF Grant, Project number I4245-N35.

\subsection*{Notation}

We denote by $e_1,\dots,e_d$ the vectors of the standard basis of $\R^d$.
For a set $X$ and a natural number $n$,  we denote by $[X]^{\leq n}$  the family of subsets of $X$ with cardinality at most $n$. Furthermore, we define
$$[X]^{=n}=[X]^{\leq n}\setminus [X]^{\leq n-1},\quad\text{and}\quad [X]^{<\omega}=\bigcup_{n\in\N}[X]^{\leq n}.$$

\section{Definitions, basic properties, and statements of main results}\label{sec2}

\subsection{Lattices and periodic point sets}

A subgroup $\Lambda$ of the additive group $\R^d$ is a {\em lattice} if there exist $d$ linearly independent  vectors $v_1,\dots,v_d\in\Lambda$, called {\em generators}, such that every element $\lambda\in\Lambda$ can be written as an integer combination of those $d$ vectors. In other words,
$$\Lambda =\Z (v_1,\dots,v_d)=\bigg\{\sum_{i=1}^da_iv_i\mid a_i\in \Z, i=1,\dots,d\bigg\}$$
where $v_1,\dots,v_d$ is a basis of $\R^d$. In the previous notation, we define the {\em unit cell} of $\Lambda$ as the subset
$$U(\Lambda)=\bigg\{\sum_{i=1}^dt_iv_i\mid\,t_i\in[0,1[, i=1,\dots,d\bigg\}.$$
Its volume $\Vol(U(\Lambda))$, which is equal to the  (absolute value of) determinant\footnote{In order to avoid possible confusion, we assume that we always take absolute values of determinants when we write about lattices.} of the matrix $(v_1,\dots,v_d)$ that, for the sake of simplicity, we denote by $\det(\Lambda)$, is independent of the choice  of the generators. Let us denote by $\Latt(\R^d)$ the family of  all lattices of $\R^d$.

Let $X\subseteq\R^d$. We define:
\begin{compactenum}[(a)]
\item its {\em packing radius} as 
$$p(X)=\sup\{r\in\R_{\geq 0}\mid\forall x\in\R^d,\,\lvert B(x,r)\cap X\rvert\leq 1 \}=\inf_{x,y\in X, x\neq y}\frac{\lvert\lvert x-y\rvert\rvert}{2};$$
\item its {\em covering radius} as
$$c(X)=\inf\{R\in\R_{\geq 0}\mid\forall x\in\R^d,\,\lvert \overline{B(x,R)}\cap X\rvert\geq 1 \}=\sup_{y\in\R^d}\inf_{x\in X}\lvert\lvert x-y\rvert\rvert.$$
\end{compactenum}
Here $B(\cdot,a)$ and $\overline{B(\cdot,a)}$ are open and closed balls of radius $a$, respectively.

Note that, $p(X)\geq r$ if $\lvert\lvert x-y\rvert\rvert\geq 2r$ for every $x,y\in X$. If $X$ has strictly positive packing radius, then $X$ is countable. If its covering radius is  finite, then $X$ is infinite. A {\em Delone set}\footnote{Boris Delone (Delaunay), a Russian and Soviet mathematician of French descent. He used the French spelling Delaunay in earlier works and the transliteration of Russian spelling Delone in later works. We use
the latter spelling in the paper but the actual spelling in the cited references might be different.} is a subset $X$ of $\R^d$ with $p(X)>0$ and $c(X)<\infty$, and thus it is countably infinite. We illustrate the concepts of packing and covering radii in Figure \ref{fig:p_and_c}.

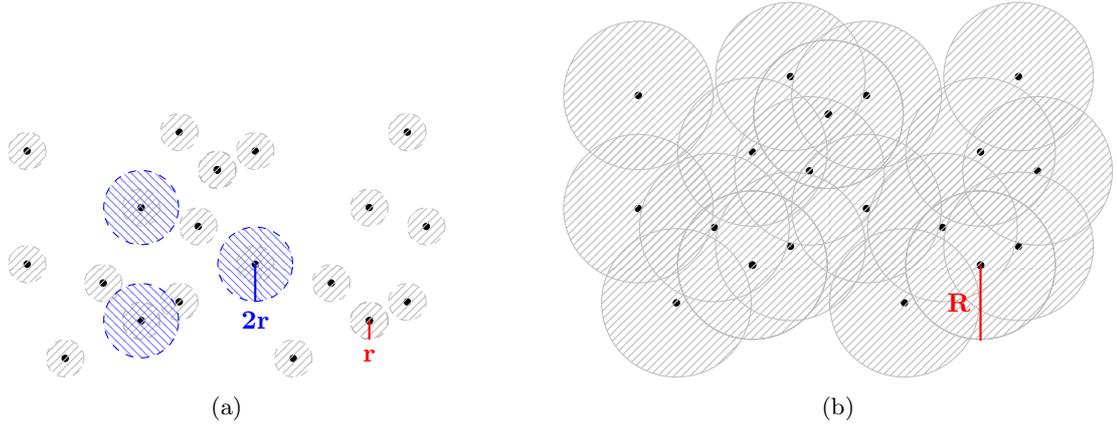
\begin{figure}
\begin{subfigure}[b]{0.5\textwidth}
\centering
\begin{tikzpicture}[>=latex]
			\fill (1,1) circle (1.4pt) (1.5,0.5) circle (1.4pt) (1.5,0.5) circle (1.4pt) (2,0.75) circle (1.4pt) (0.5,0) circle (1.4pt) (2.25,1.75) circle (1.4pt) (3,1.25) circle (1.4pt);
			\fill (4,1) circle (1.4pt) (4.5,0.5) circle (1.4pt) (4.5,0.5) circle (1.4pt) (5,0.75) circle (1.4pt) (3.5,0) circle (1.4pt) (5.25,1.75) circle (1.4pt); 
			\fill (0,1.25) circle (1.4pt); 
			\fill (2,3) circle (1.4pt) (2.5,2.5) circle (1.4pt) (2.5,2.5) circle (1.4pt) (3,2.75) circle (1.4pt) (1.5,2) circle (1.4pt);
			\fill (5,3) circle (1.4pt) (4.5,2) circle (1.4pt); 
			\fill (0,2.75) circle (1.4pt);
	
			\fill [draw=lightgray, dashed, pattern=north east lines, pattern color=lightgray] (1,1) circle (7pt) (1.5,0.5) circle (7pt) (1.5,0.5) circle (7pt) (2,0.75) circle (7pt) (0.5,0) circle (7pt) (2.25,1.75) circle (7pt) (3,1.25) circle (7pt); 
			\fill [draw=lightgray, dashed, pattern=north east lines, pattern color=lightgray] (4,1) circle (7pt) (4.5,0.5) circle (7pt) (4.5,0.5) circle (7pt) (5,0.75) circle (7pt) (3.5,0) circle (7pt) (5.25,1.75) circle (7pt);
			\fill [draw=lightgray, dashed, pattern=north east lines, pattern color=lightgray] (2,3) circle (7pt) (2.5,2.5) circle (7pt) (2.5,2.5) circle (7pt) (3,2.75) circle (7pt) (1.5,2) circle (7pt); 
			\fill [draw=lightgray, dashed, pattern=north east lines, pattern color=lightgray] (5,3) circle (7pt) (4.5,2) circle (7pt); 
			\fill [draw=lightgray, dashed, pattern=north east lines, pattern color=lightgray] (0,2.75) circle (7pt);
			\fill [draw=lightgray, dashed, pattern=north east lines, pattern color=lightgray] (0,1.25) circle (7pt);

			\draw[red, thick] (4.5,0.5)--(4.5,0.25) node [pos=1, below] {${\bf r}$};

\fill [draw=blue, dashed, pattern=north west lines, pattern color=blue!50!white] (1.5,2) circle (14pt);

\fill [draw=blue, dashed, pattern=north west lines, pattern color=blue!50!white] (1.5,0.5) circle (14pt);

\fill [draw=blue, dashed, pattern=north west lines, pattern color=blue!50!white] (3,1.25) circle (14pt);

			\draw[blue, thick] (3,1.25) -- (3,0.75) node [pos=1, below] {${\bf 2r}$};

		\end{tikzpicture}
	
 \caption{}
		\end{subfigure}
		\begin{subfigure}[b]{0.5\textwidth}
\centering    
	\begin{tikzpicture}[>=latex]
						\fill (1,1) circle (1.4pt) (1.5,0.5) circle (1.4pt) (1.5,0.5) circle (1.4pt) (2,0.75) circle (1.4pt) (0.5,0) circle (1.4pt) (2.25,1.75) circle (1.4pt) (3,1.25) circle (1.4pt);
			\fill (4,1) circle (1.4pt) (4.5,0.5) circle (1.4pt) (4.5,0.5) circle (1.4pt) (5,0.75) circle (1.4pt) (3.5,0) circle (1.4pt) (5.25,1.75) circle (1.4pt); 
			\fill (0,1.25) circle (1.4pt); 
			\fill (2,3) circle (1.4pt) (2.5,2.5) circle (1.4pt) (2.5,2.5) circle (1.4pt) (3,2.75) circle (1.4pt) (1.5,2) circle (1.4pt);
			\fill (5,3) circle (1.4pt) (4.5,2) circle (1.4pt); 
			\fill (0,2.75) circle (1.4pt);
	
			\fill [draw=lightgray, pattern=north east lines, pattern color=lightgray] (1,1) circle (28pt) (1.5,0.5) circle (28pt) (1.5,0.5) circle (28pt) (2,0.75) circle (28pt) (0.5,0) circle (28pt) (2.25,1.75) circle (28pt) (3,1.25) circle (28pt); 
			\fill [draw=lightgray, pattern=north east lines, pattern color=lightgray] (4,1) circle (28pt) (4.5,0.5) circle (28pt) (4.5,0.5) circle (28pt) (5,0.75) circle (28pt) (3.5,0) circle (28pt) (5.25,1.75) circle (28pt);
			\fill [draw=lightgray, pattern=north east lines, pattern color=lightgray] (2,3) circle (28pt) (2.5,2.5) circle (28pt) (2.5,2.5) circle (28pt) (3,2.75) circle (28pt) (1.5,2) circle (28pt); 
			\fill [draw=lightgray, pattern=north east lines, pattern color=lightgray] (5,3) circle (28pt) (4.5,2) circle (28pt); 
			\fill [draw=lightgray, pattern=north east lines, pattern color=lightgray] (0,2.75) circle (28pt);
			\fill [draw=lightgray, pattern=north east lines, pattern color=lightgray] (0,1.25) circle (28pt); 
			
			\draw[red, thick] (4.5,0.5)--(4.5,-0.5) node [pos=0.5, left] {${\bf R}$};
		\end{tikzpicture}
	
 \caption{}
\end{subfigure}
\caption{A portion of an infinite subset $X$ of $\R^2$ with $p(X)\geq r$ and $c(X)\leq R$. In both pictures (a) and (b), the black dots represent the point set. In (a), the gray balls of radius $r$ are centered on the black dots and do not overlap. Whereas, the blue balls have radius $2r$, are centered on the black dots, and contain no other black dot. In (b), the gray balls with radius $R$ centered on the black dots cover the entire space. 
}\label{fig:p_and_c}
\end{figure}

A particular class of Delone sets is given by {\em periodic point sets} of $\R^d$, which are subsets $X$ given by the Minkowski sum of a lattice $\Lambda\leq\R^d$ and a finite  subset $P\subseteq  U(\Lambda)$ called {\em motif}. More explicitly, $X$ is equal to $\Lambda+P=\{\lambda+x\mid\lambda\in\Lambda,x\in P\}$. 

Note that given a periodic point set $X$, there are many ways to choose a lattice $\Lambda$ and associated motif $P$ such that $X=\Lambda+P$. In each such situation, we refer to the pair $\Lambda$ and $P$ as {\em representatives} of $X$. 

To every periodic point set $\Lambda+P$ we associate its {\em density}, which is given by $\den(X)=\lvert P\rvert/\det(\Lambda)=\lvert P\rvert/\Vol(U(\Lambda))$.  
Even though for its definition we use a particular choice of representatives, the density of a periodic point $X$ set does not depend on it as it can be treated as the average number of points of the set per unit volume. Here the average is computed as 
$$\den(X)=\lim_{R\rightarrow \infty}\frac{\lvert X\cap \overline{B(0,R)}\rvert}{\Vol{\overline{B(0,R)}}}$$
using the number of points of $X$ in a large ball centered at the origin. Moreover, this definition agrees with a more general definition of density through van Hove sequences which captures a similar quantity (and the same value) for a broader class of Delone sets, see \cite[Sect. 2]{fre22} for more details.

Let us denote by $\PPS(\R^d)$ the family of all periodic point  sets  of $\R^d$. Moreover, we consider the following subsets of $\PPS(\R^d)$ restricted to subfamilies with bounds on density, packing or covering radii, or their combinations:
\begin{gather*}
\PPS_r(\R^d)=\{X\in\PPS(\R^d)\mid  p(X)>r\},\quad\PPS^R(\R^d)=\{X\in\PPS(\R^d)\mid c(X)<R\},\\
\PPS_r^R(\R^d)=\PPS_r(\R^d)\cap\PPS^R(\R^d),\quad\PPS(\R^d,\kappa)=\{X\in\PPS(\R^d)\mid\den(X)=\kappa\},\\
\PPS_r(\R^d,\kappa)=\PPS_r(\R^d)\cap\PPS(\R^d,\kappa),\quad\PPS^R(\R^d,\kappa)=\PPS^R(\R^d)\cap\PPS(\R^d,\kappa),\quad\text{and}\\
\PPS_r^R(\R^d,\kappa)=\PPS_r^R(\R^d)\cap\PPS(\R^d,\kappa).
\end{gather*}
We adopt a similar notation for subsets of $\Latt(\R^d)$. 

The packing and the covering radii together with the dimension and the density are interdependent parameters. One of the most famous examples of such dependency is the question of finding a $d$-dimensional point set that provides the best possible density of packing equal balls.

This question for $d=3$ is known as {\it Kepler's conjecture} and was part of Hilbert's 18th problem. Overall, the problem is solved only for $d=1,2,3,8$ and $24$ \cite{Hal05,Via17,CKMRV17} and in all these dimensions the set that gives the densest packing is a lattice.

The same question for lattices is also solved for $d=4,5,6$ and $7$. It is also worth noting that for some dimensions $d\geq 10$, the best known point sets are periodic but not lattices. We refer to \cite{Sch08} for an extensive survey on the questions of packings and coverings related to lattices and periodic point sets.

We also mention that while it is not known that for every dimension $d$ the densest packing with density $\delta_d$ is provided by a periodic point set, or even whether the best density is minimum and not infimum, it is possible to find a $d$-dimensional periodic point set that gives density at least $\delta_d-\varepsilon$ for every $\varepsilon>0$, see \cite{Sch08,ManMar22}.

\subsection{Extended pseudo-metrics on the space of periodic point sets}

Let $X$ be a set. An {\em extended pseudo-metric $d$} on $X$ is a map $d\colon X\times X\to\R_{\geq 0}\cup\{\infty\}$ satisfying the following properties:
\begin{compactenum}[(a)]
	\item for every $x\in X$, $d(x,x)=0$;
	\item for every $x,y\in X$, $d(x,y)=d(y,x)$;
	\item for every $x,y,z\in X$, $d(x,z)\leq d(x,y)+d(y,z)$ with the convention that, for every $a\in\R$, $a<a+\infty=\infty=\infty+a = \infty + \infty$.
\end{compactenum}

If $d$ does not assume the value $\infty$, then we drop the adjective ``extended''. Finally, we call $d$ an ({\em extended}) {\em metric} if property (a) can be strengthened to the following one:
\begin{compactenum}
\item[(a$^\prime$)] for every $x,y\in X$, $d(x,y)=0$ if and only if $x=y$.
\end{compactenum}
If $d$ is a metric, then the pair $(X,d)$ (and $X$ if we do not need to specify the metric) is said to be a  {\em metric space}.

Suppose that $Y$ is a subset of $X$ and $d$ is an extended pseudo-metric. Then $d$ induces an extended pseudo-metric $d|_{Y\times Y}$ on $Y$, which we denote by $d|_Y$ for the sake of simplicity.

For a subset $A$ of a metric space $(X,d)$ we define its {\em diameter} as the value $\diam(A)=\sup_{x,y\in A}d(x,y)$, and we say that $A$ is {\em bounded} if $\diam(A)<\infty$.

Given two subsets $A,B$ of a metric space $(X,d)$, we write
$$\dist(A,B)=\inf\{d(a,b)\mid a\in A, b\in B\}.$$
If $A$ is a singleton $\{a\}$, we simply write $\dist(a,B)$ instead of $\dist(\{a\},B)$.

Next, we introduce an extended pseudo-metric on the power set $\mathcal P(\R^d)$; this pseudo-metric is the cornerstone of this paper. We denote by $\Iso(\R^d)$ the group of all isometries of $\R^d$. 
\begin{definition}
\label{DefDB}
For every $X,Y\subseteq\R^d$, we define
\begin{gather*}
d_B(X,Y)=\inf_{f\colon X\to Y\text{ bijection}} \ \sup_{x\in X}\lvert\lvert x-f(x)\lvert\lvert,\quad\text{and}\\
\EB(X,Y)=\inf_{\psi\in\Iso(\R^d)}d_B(X,\psi(Y)).
\end{gather*}
In both cases the infimum is assumed to be infinite if no bijection exists.

We call $d_B$ the {\em bottleneck distance} and $\EB$ the {\em Euclidean bottleneck distance} (\cite{EdeHeiKurSmiWin}).
\end{definition}

In the case of periodic point sets $X$ and $Y$, the minimizing bijection and isometry in Definition \ref{DefDB} exist by Propositions \ref{PropDbMIN} for $d_B$ and  \ref{PropDebMIN} for $\EB$.
We can alternatively characterize the Euclidean bottleneck distance using the following folklore result (for example, see \cite[Ch. IV, \S38]{Blu}).
\begin{theorem}\label{thm:isometry_extension}
Let $X,Y\subseteq\R^d$. If there is an isometry $\psi\colon X\to Y$, then there exists $\widetilde\psi\in\Iso(\R^d)$ such that $\widetilde\psi|_X=\psi$.
\end{theorem}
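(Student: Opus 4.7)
The plan is to reduce to the linear case by translation, construct a linear isometry of the affine hulls from a maximal affinely independent subset, verify that this linear isometry coincides with $\psi$ on all of $X$, and finally extend to $\R^d$ via orthogonal complements. The problem is trivial if $X = \emptyset$, so fix some $x_0 \in X$ and consider $\phi \colon X - x_0 \to Y - \psi(x_0)$ defined by $\phi(z) = \psi(z + x_0) - \psi(x_0)$. Then $\phi$ is an isometry (in the metric sense) with $\phi(0) = 0$, and extending $\phi$ to a linear isometry of $\R^d$ and conjugating by translations will produce the desired $\widetilde\psi$.

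Next I would choose $x_1, \ldots, x_k \in X - x_0$ forming a maximal linearly independent subset and set $y_i = \phi(x_i)$. Using $\phi(0) = 0$ and $\|\phi(u) - \phi(v)\| = \|u - v\|$, the polarization identity
\[
\langle u, v \rangle = \tfrac{1}{2}\bigl(\|u\|^2 + \|v\|^2 - \|u - v\|^2\bigr)
\]
yields $\langle y_i, y_j \rangle = \langle x_i, x_j \rangle$ for all $i,j$. Hence the Gram matrix of $(y_1,\dots,y_k)$ equals that of $(x_1,\dots,x_k)$, so $y_1,\dots,y_k$ are linearly independent and the unique linear map $L \colon \mathrm{span}(x_1,\dots,x_k) \to \mathrm{span}(y_1,\dots,y_k)$ sending $x_i \mapsto y_i$ is a linear isometry.

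The main obstacle is to show that $L = \phi$ on all of $X - x_0$, not just on the chosen basis. For an arbitrary $x \in X - x_0$, maximality of $\{x_1,\dots,x_k\}$ forces $x \in \mathrm{span}(x_1,\dots,x_k)$, and applying the same argument to $\phi^{-1}$ (which is an isometry with $\phi^{-1}(0) = 0$) forces $\phi(x) \in \mathrm{span}(y_1,\dots,y_k)$. Both $L(x)$ and $\phi(x)$ therefore lie in the same $k$-dimensional subspace, and by the polarization identity applied again we obtain $\langle \phi(x), y_i \rangle = \langle x, x_i \rangle = \langle L(x), y_i \rangle$ for every $i$; since $y_1,\dots,y_k$ span the ambient subspace, this uniquely determines the vector, giving $\phi(x) = L(x)$.

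Finally, I would extend $L$ to a linear isometry $\widetilde L$ of $\R^d$ by choosing any orthogonal isomorphism between the orthogonal complements of $\mathrm{span}(x_1,\dots,x_k)$ and $\mathrm{span}(y_1,\dots,y_k)$, which exist and have equal dimensions. Then $\widetilde \psi(z) := \widetilde L(z - x_0) + \psi(x_0)$ belongs to $\Iso(\R^d)$ and satisfies $\widetilde \psi|_X = \psi$ by the agreement established in the previous step. All pieces beyond Step 3 are standard linear algebra; the one point that requires genuine care is verifying that $\phi(x)$ actually lands in the span of the $y_i$'s, which is precisely where applying the argument symmetrically to $\phi^{-1}$ is essential.
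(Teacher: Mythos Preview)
Your proof is correct and follows the standard argument for this classical result. Note, however, that the paper itself does not supply a proof of this theorem: it is stated as a folklore result with a reference to Blumenthal's monograph \cite{Blu}, so there is no paper proof to compare against. Your approach---reducing to the origin-preserving case, using polarization to transport the Gram matrix of a maximal independent set, verifying agreement on all of $X-x_0$ via inner products, and extending across orthogonal complements---is precisely the textbook route, and all the steps are sound. The one place you flagged as needing care, namely that $\phi(x)$ lands in $\mathrm{span}(y_1,\dots,y_k)$, is indeed handled correctly by the symmetry argument: if some $y'\in Y-\psi(x_0)$ were outside that span, then $y_1,\dots,y_k,y'$ would have nonsingular Gram matrix, and polarization applied to $\phi^{-1}$ would force $x_1,\dots,x_k,\phi^{-1}(y')$ to be linearly independent in $X-x_0$, contradicting maximality.
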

Using Theorem \ref{thm:isometry_extension}, for every $X,Y\subseteq\R^d$,
\begin{equation}\label{eq:EB}\EB(X,Y)=\inf\{d_B(i_X(X),i_Y(Y))\mid i_X\colon X\to\R^d\text{ and }i_Y\colon Y\to\R^d\text{ isometric embeddings}\}.\end{equation}

\begin{remark}
A {\em correspondence $\mathcal R$} between two sets $X$ and $Y$ is a relation $\mathcal R\subseteq X\times Y$ such that every $x\in X$ is in relation with at least one element $y\in Y$ and vice versa. Then, for every metric space $(Z,d)$, the {\em Hausdorff distance} is defined as follows: for every $X,Y\subseteq Z$,
$$d_H(X,Y)=\inf_{\mathcal R\subseteq X\times Y\text{ correspondence}}\sup_{(x,y)\in\mathcal R}d(x,y).$$

The {\em Gromov-Hausdorff distance} $\GH$ is, with an abuse of notation, an extended pseudo-metric on the class of all metric spaces. If $X$ and $Y$ are two metric spaces, then
$$\GH(X,Y)=\inf_{\text{$Z$ metric space}}\inf\{d_H(i_X(X),i_Y(Y))\mid i_X\colon X\to Z\text{ and }i_Y\colon Y\to Z\text{ isometric embeddings}\}.$$

This distance notion was introduced by Edwards (\cite{Edw}), and then rediscovered and generalised by Gromov (\cite{Gro_GH_fr}). A great impulse to study its properties recently came from applications in shape recognition and comparison (\cite{MemSap1,MemSap2,Mem07}).

If $X$ and $Y$ are two subsets of $\R^d$, the {\em Euclidean Hausdorff distance $\EH$} is another distance notion that is used to approximate the Gromov-Hausdorff distance between them considered as metric spaces. It is defined similarly to the characterisation of the Euclidean bottleneck distance given in \eqref{eq:EB}, namely,
$$\EH(X,Y)=\inf\{d_H(i_X(X),i_Y(Y))\mid i_X\colon X\to\R^d\text{ and }i_Y\colon Y\to\R^d\text{ isometric embeddings}\}.$$
Clearly, $\EH(X,Y)\geq\GH(X,Y)$. Moreover, the inequality can be strict (for example, see \cite{Mem}). Relations between $\EH$ and $\GH$ are studied in \cite{Mem} and \cite{MajVitWen}. See also \cite{Ant1,Ant2} for recent results on the Euclidean Hausdorff distance and the Gromov-Hausdorff distance on $\mathcal P(\R^d)$.

On the set $\mathcal P(\R^d)$, if we similarly define a {\em Gromov bottleneck distance} $\GB$, we have the following inequalities:
$$\xymatrix{\GH\ar@{}[r]|*[@]{\leq} \ar@{}[d]|*[@]{\leq} & \EH \ar@{}[r]|*[@]{\leq} \ar@{}[d]|*[@]{\leq} & d_H \ar@{}[d]|*[@]{\leq}\\
\GB \ar@{}[r]|*[@]{\leq} & \EB \ar@{}[r]|*[@]{\leq} & d_B.
}$$
Moreover, the inequality  $\GB\leq\EB$ can be strict as well. Consider, for example, the two triples of points given by the vertices of two concentric equilateral triangles of edges $2$ and $1$, respectively. More explicitly, take the two subsets 
$$X=\{(1,-\sqrt{3}/3),(-1,-\sqrt{3}/3),(0,2\sqrt{3}/3)\}\textrm{ and }Y=\{(1/2,-\sqrt{3}/6),(-1/2,-\sqrt{3}/6),(0,\sqrt{3}/3)\}$$ of $\R^2.$ 
Then $\EB(X,Y)=\sqrt{3}/3$. However, $\GB(X,Y)=1/2$ which can be realized by embedding $X$ and $Y$ in a metric tree as shown in Figure \ref{fig:GB}.

\begin{figure}
    \begin{subfigure}[b]{0.5\textwidth}
        \centering
        \begin{tikzpicture}[scale=1.5]
        \draw[dashed,red] (1,-0.577)--(-1,-0.577)--(0,1.154)--(1,-0.577);
            \draw[blue,dashed] (0.5,-0.2885)--(-0.5,-0.2885)--(0,0.577)--(0.5,-0.2885);
        \draw[-] (1,-0.577)--(0.5,-0.2885) (-1,-0.577)--(-0.5,-0.2885) (0,1.154)--(0,0.577) node [pos=0.5,right]{$\sqrt{3}/3$};
            \fill [red] (1,-0.577) circle (1.33pt) (-1,-0.577) circle (1.33pt) (0,1.154) circle (1.33pt);
            \fill [blue] (0.5,-0.2885) circle (1.33pt) (-0.5,-0.2885) circle (1.33pt) (0,0.577) circle (1.33pt);
            \draw[-,red] (1,-0.577)--(-1,-0.577)--(0,1.154)--(1,-0.577);
            \draw[-,blue] (0.5,-0.2885)--(-0.5,-0.2885)--(0,0.577)--(0.5,-0.2885);
        \end{tikzpicture}\label{fig:triangle}\caption{}
    \end{subfigure}
    \begin{subfigure}[b]{0.5\textwidth}
        \centering
        \begin{tikzpicture}[scale=1.5]
        \draw[-] (1,-0.577)--(0.5,-0.2885)--(0,0) (-1,-0.577)--(-0.5,-0.2885)--(0,0) (0,1.154)--(0,0.577) node [pos=0.5,right]{$1/2$} (0,0.577)--(0,0);
        \fill [red] (1,-0.577) circle (1.33pt) (-1,-0.577) circle (1.33pt) (0,1.154) circle (1.33pt);
        \fill [blue] (0.5,-0.2885) circle (1.33pt) (-0.5,-0.2885) circle (1.33pt) (0,0.577) circle (1.33pt);
        \end{tikzpicture}\label{fig:tree}\caption{}
    \end{subfigure}
    \caption{In (a), we represent the two concentric triangles in red and blue and indicate the bijection between their vertices realising $d_B(X,Y)=\sqrt{3}/3$ (and $\EB(X,Y)=\sqrt{3}/3$) in black. In (b), instead, we show the configuration of $X$ and $Y$ as subsets of a metric tree realising $\GB(X,Y)=1/2$. In this second picture, all the black segments  have  length $1/2$.}
    \label{fig:GB}
\end{figure}
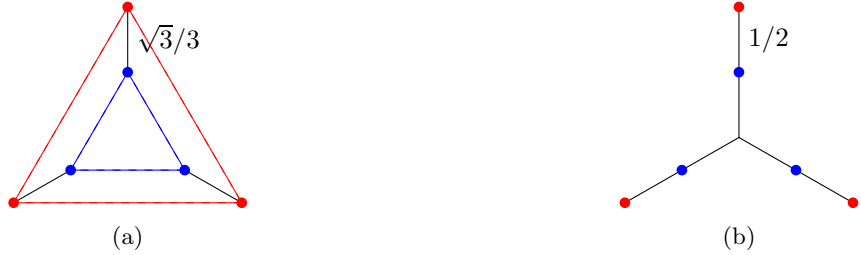
\end{remark}

Because of the existence of the minimizing isometry (Propositions \ref{PropDbMIN} for $d_B$ and  \ref{PropDebMIN} for $\EB$), it is easy to show that $\EB$ is actually an extended metric since if two periodic point sets $X$ and $Y$ satisfy $\EB(X,Y)=0$ then they are isometric. More precisely, $\EB$ is an extended metric on the space of equivalence classes of periodic point sets.

If $d$ is an extended pseudo-metric on a set $X$, and $x\in X$, following \cite{Za}, we denote by $\QQ_{(X,d)}(x)$ the ({\em large-scale}) {\em connected component of $x$}, which is the subset $\{y\in X\mid d(x,y)<\infty\}$. Clearly, $d|_{\QQ_{X}(x)}$ is a pseudo-metric.

The density can be used to characterize connected components. It is  already known that, for every pair of lattices $\Lambda,\Xi \in \Latt(\R^d)$, $d_B(\Lambda,\Xi)<\infty$ if and only if $\den(\Lambda)=\den(\Xi)$ (\cite{DunOgu}). Moreover, a similar result holds for periodic point sets and the Euclidean bottleneck distance as every periodic point is is at finite bottleneck distance from a lattice obtain by moving the points of the motif within the fundamental parallelepiped. 
\begin{theorem}\label{theo:connected_component_density}
For every $X,Y\in\PPS(\R^d)$, the following properties are equivalent:
\begin{compactenum}[(a)]
\item $\EB(X,Y)<\infty$;
\item $d_B(X,Y)<\infty$;
\item $\den(X)=\den(Y)$.
\end{compactenum}
Equivalently, for every $X\in\PPS(\R^d)$, $\QQ_{(\PPS(\R^d),\EB)}(X)=\QQ_{(\PPS(\R^d),d_B)}(X)=\PPS(\R^d,\den(X))$.
\end{theorem}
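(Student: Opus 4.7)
The strategy is to establish the cycle $(b) \Rightarrow (a) \Rightarrow (c) \Rightarrow (b)$; the reformulation in terms of large-scale connected components is then immediate, as it just rewrites the equivalence of (a), (b), (c) in the language of $\QQ$. The implication $(b) \Rightarrow (a)$ is trivial from Definition~\ref{DefDB}, since $\EB(X,Y) \leq d_B(X,Y)$ (take $\psi = \mathrm{id}$).

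For $(a) \Rightarrow (c)$, I would use the ball-counting formula for density. Assuming $\EB(X,Y) < \infty$, pick an isometry $\psi \in \Iso(\R^d)$ and a bijection $f: X \to \psi(Y)$ with displacement bounded by some $K \geq 0$. Because isometries preserve Lebesgue volume, $\den(\psi(Y)) = \den(Y)$ directly from the limit definition. For the bijection part, for every $R > K$ one has the sandwich
\begin{equation*}
f\bigl(X \cap \overline{B(0, R-K)}\bigr) \subseteq \psi(Y) \cap \overline{B(0, R)} \subseteq f\bigl(X \cap \overline{B(0, R+K)}\bigr),
\end{equation*}
so dividing cardinalities by $\Vol(\overline{B(0,R)})$ and letting $R \to \infty$ forces $\den(X) = \den(\psi(Y)) = \den(Y)$.

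The remaining implication $(c) \Rightarrow (b)$ rests on the following \emph{bridge lemma}: every periodic point set lies at finite bottleneck distance from some lattice of the same density. Given $Z = \Lambda + P$ with $\Lambda = \Z(v_1,\dots,v_d)$ and $|P| = n$, I would take $\Xi := \Z(v_1/n, v_2, \dots, v_d)$, which satisfies $\det(\Xi) = \det(\Lambda)/n$ and hence $\den(\Xi) = n/\det(\Lambda) = \den(Z)$. The crucial observation is that $\Xi$ may be re-expressed over the same underlying lattice $\Lambda$ as $\Xi = \Lambda + P'$ with $P' = \{iv_1/n : i = 0,\dots,n-1\} \subset U(\Lambda)$. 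Enumerating $P = \{p_1,\dots,p_n\}$ and $P' = \{p'_1,\dots,p'_n\}$, the coset-wise bijection $\lambda + p_i \mapsto \lambda + p'_i$ (for $\lambda\in\Lambda$, $i=1,\dots,n$) has displacement bounded by $2\diam(U(\Lambda))$, so $d_B(Z, \Xi) < \infty$. Applying this to both $X$ and $Y$ produces lattices $\Xi_X, \Xi_Y$ of common density $\kappa = \den(X) = \den(Y)$; the Duneau--Oguey theorem \cite{DunOgu} gives $d_B(\Xi_X, \Xi_Y) < \infty$, and the triangle inequality for $d_B$ then yields $d_B(X,Y) < \infty$.

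The main obstacle is the bridge lemma: although elementary, it requires presenting the comparison lattice $\Xi$ as a periodic point set over the \emph{same} underlying lattice $\Lambda$, so that both sides decompose into the same number of $\Lambda$-cosets and admit a coset-wise pairing with uniformly bounded displacement. Everything else is a soft combination of isometric invariance of density, the ball-counting definition, and the lattice case proved in \cite{DunOgu}.
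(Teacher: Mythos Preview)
Your proposal is correct and follows essentially the same route as the paper: the same trivial implication $(b)\Rightarrow(a)$, the same bridge lemma (moving the motif points onto the evenly spaced points $iv_1/n$ to reach a lattice of equal density, then invoking Duneau--Oguey) for $(c)\Rightarrow(b)$, and the same density-by-counting argument for $(a)\Rightarrow(c)$, with only cosmetic differences (the paper counts in cubes $C_N$ rather than balls, and reduces $(a)\Rightarrow(c)$ to $(b)\Rightarrow(c)$ by first noting isometries preserve density, whereas you handle the isometry inside the sandwich).
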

\begin{remark}
Note that the implication (c)$\to$(b) does not hold for more arbitrary Delone sets even if the sets under consideration ``look the same'' locally. Moreover, if a Delone set $X\subset \R^d$ with well-defined density (and a minor restriction on local patterns) is not at finite $d_B$-distance from some lattice, then there are uncountably many different sets that use only patterns from $X$ with pairwise infinite $d_B$-distance. We refer to \cite{fre22} for more details. 
\end{remark}

\begin{proof}
The implication (b)$\to$(a) is obvious. For the implication (c)$\to$(b) we can use the result of Duneau and Oguey \cite{DunOgu} who showed that two lattices of the same density are at finite $d_B$-distance.

In case $X$ is a PPS with a motif $P=\{x_1,\ldots,x_n\}$ of size $n$, we let $v_1,\ldots,v_d$ be a basis of the underlying lattice. For every $i\in \{1,\ldots,n\}$, we can move $x_i$ to $\frac{i}{n}v_1$. This extends to a bijection with finite $d_B$ between $X$ and the lattice with basis $\frac{1}{n}v_1,v_2,v_3,\ldots,v_d$ with the same density as $X$. After that (b) follows from (c) since the lattices obtained from $X$ and $Y$ using the described procedure will be of equal density as well.

To establish the implication (a)$\to$(c), we notice that isometries do not change density, so it is enough to show that (b) implies (c). 

Let $C_N$ be the cube of side $N$ centered at the origin and let $z:=d_B(X,Y)$. 

Then 
$$|X\cap C_N|=\den (X)\cdot N^d+O(N^{d-1}) \qquad \text{and} \qquad |Y\cap C_N|=\den (Y)\cdot N^d+O(N^{d-1}).$$
Indeed, for periodic point set $X$, we can split the space $\R^d$ into copies of the fundamental parallelepiped. Let $s$ be the diameter of the fundamental parallelepiped. Then $X\cap C_N$ contains all parallelepipeds that intersect $C_{N-s}$; this gives a lower bound for the cardinality of the intersection as 
$$|X\cap C_N|\geq |P|\cdot \frac{\Vol(C_{N-s})}{\det \Lambda}=\den (X)\cdot N^d+O(N^{d-1})$$
where $P$ is the motif, and $\Lambda$ is the underlying lattice for $X$.
Similar arguments for $C_{N+s}$ and the parallelogram it contains give an upper bound with the same leading term.

Employing similar arguments, 
$$|X\cap C_N|\geq |Y\cap C_{N-z}|$$ because every point from the latter intersection stays in the former intersection after the bijection between $Y$ and $Z$. Dividing by $N^d$ and taking the limit we get $\den(X)\geq \den(Y)$. The opposite inequality can be proved in the same way.
\end{proof}

After that, it is easy to see that $\EB$ and $\GB$ are actually extended pseudo-metric.

The situation is completely different for the Euclidean Hausdorff distance. In fact, for every $X,Y\in\PPS(\R^d)$, $\EH(X,Y)$ is bounded by the maximum between the covering radii of $X$ and $Y$ and so it is finite. Hence, the ``large-scale geometries'' induced by $\EB$ and $\EH$ are very different. We will formalize the notion of large-scale geometry in Section \ref{sec:coarse}. The opposite situation occurs for the small-scale geometry of $\PPS_r(\R^d)$.
\begin{proposition}
On $\PPS_r(\R^d)$, $\EB$ and $\EH$ induce the same topology. More precisely, the identity map $$id\colon(\PPS_r(\R^d),\EH)\to(\PPS_r(\R^d),\EB)$$ is locally an isometry (for every $s<r/2$, and $X,Y\in\PPS_r(\R^d)$, $\EH(X,Y)=s$ if and only if $\EB(X,Y)=s$). 
Similarly, $\GB$ and $\GH$ induce the same topology.
\end{proposition}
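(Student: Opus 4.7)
My plan is to reduce the proposition to a single clean implication: for $X, Y \in \PPS_r(\R^d)$ with $\EH(X,Y) = s < r/2$, show $\EB(X,Y) \leq s$. The reverse inequality $\EH \leq \EB$ is automatic on all of $\mathcal P(\R^d)$, since the graph of a bijection is a correspondence realizing the same supremum of distances. Granting the target implication, if instead $\EB(X,Y) = s < r/2$, then $\EH(X,Y) \leq s < r/2$ as well, so re-applying the same implication will give $\EB(X,Y) \leq \EH(X,Y)$, and the biconditional in the statement will follow.

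The core of the argument is a rigidity observation. For $X \in \PPS_r(\R^d)$, distinct points of $X$ are at mutual distance strictly greater than $2r$, and this is preserved by isometries of $\R^d$. My plan is then: fix $\delta > 0$ with $s + \delta < r/2$, choose $\psi \in \Iso(\R^d)$ with $d_H(X, \psi(Y)) \leq s + \delta$, and construct a bijection $f\colon X \to \psi(Y)$ satisfying $\lvert\lvert x - f(x)\rvert\rvert \leq s+\delta$ for all $x$. Since $\psi(Y)$ is closed and discrete in $\R^d$, for every $x \in X$ the quantity $\inf_{y' \in \psi(Y)} \lvert\lvert x - y'\rvert\rvert$ is attained by some $y'_x$, and it satisfies $\lvert\lvert x - y'_x\rvert\rvert \leq s + \delta < r/2$; the $2r$-separation of $\psi(Y)$ will then force $y'_x$ to be the \emph{unique} element of $\psi(Y)$ within distance $r/2$ of $x$. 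Setting $f(x) = y'_x$ and performing the symmetric construction to obtain $g\colon \psi(Y) \to X$, the uniqueness statement applied on both sides will show that $f$ and $g$ are mutually inverse bijections. This yields $d_B(X, \psi(Y)) \leq s + \delta$, hence $\EB(X,Y) \leq s + \delta$, and letting $\delta \to 0$ will finish the argument. The case of $\GB$ and $\GH$ will be handled identically, replacing the single isometry $\psi$ by a pair of isometric embeddings $i_X, i_Y$ into a common metric space $Z$; the key point is that the images $i_X(X)$ and $i_Y(Y)$ remain $2r$-separated in $Z$ (isometric embeddings preserve distances), so the packing argument transfers verbatim.

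The main subtlety will be verifying that the maps $f$ and $g$ are genuinely mutually inverse rather than merely a pair of well-defined injections: if $f(x) = y'$, then $\lvert\lvert x - y'\rvert\rvert < r/2$, and the $2r$-separation of $X$ will force $x$ to be the unique element of $X$ within $r/2$ of $y'$, giving $g(y') = x$. Beyond this bookkeeping, which is where the hypothesis $s < r/2$ is used essentially, I do not expect further obstacles.
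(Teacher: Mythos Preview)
Your proposal is correct and follows essentially the same approach as the paper: both exploit the fact that when $s+\varepsilon<r/2$ and both sets are $2r$-separated, any correspondence witnessing the Hausdorff distance is forced to be a bijection. The paper phrases this as ``the correspondence $\mathcal R_\varepsilon$ is forced to be one-to-one'' whereas you build the bijection explicitly via nearest-neighbor maps and verify they are mutual inverses, but the underlying rigidity argument is identical.
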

\begin{proof}
Let $X,Y\in\PPS(\R^d)$ with $\EH(X,Y)=s<r/2$.  We already know that $\EB(X,Y)\geq s$. For every $\varepsilon>0$, there is $\psi_\varepsilon\in\Iso(\R^d)$ such that $d_H(X,\psi_\varepsilon(Y))<s+\varepsilon$ and a correspondence $\mathcal R_\varepsilon$ witnessing the inequality. Because of the constraint on the packing radius of $X$ and $Y$, $\mathcal R_\varepsilon$ is forced to be one-to-one if $s+\varepsilon< r/2$, and thus $d_B(X,\psi_r(Y))<s+\varepsilon$ which implies $\EB(X,Y)<s+\varepsilon$. The proof of the  second statement is analogous.
\end{proof}
Actually, the previous result can be refined. The same reasoning can be applied to the families of all those subsets of $\R^d$ with packing radius uniformly bounded from below to conclude that the uniformities, and not just the topologies, induced by $\EB$ and $\EH$ (and $\GB$ and $\GH$) are the same (for the definition, we refer to the classic monograph \cite{Isb}).
\begin{corollary}
$\EH$ is a metric on the family of isometry classes of periodic point sets in $\PPS_r(\R^d,\kappa)$.
\end{corollary}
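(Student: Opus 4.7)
The plan is to verify the metric axioms one by one for $\EH$ on the set of isometry classes of $\PPS_r(\R^d,\kappa)$, leveraging the proposition that immediately precedes the corollary.

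First I would note that $\EH$ descends to a well-defined map on isometry classes: the isometric embeddings $i_X, i_Y\colon X,Y\to\R^d$ that appear in the defining infimum are in natural bijection with those for $\phi(X), \psi(Y)$ (precompose with $\phi^{-1}, \psi^{-1}$), so the value is invariant under replacing $X, Y$ by isometric copies. Symmetry and the triangle inequality are inherited verbatim from the fact that $\EH$ is an extended pseudo-metric on all of $\mathcal{P}(\R^d)$. Finiteness is the observation already used in the paragraph preceding the corollary: for any two $X,Y\in\PPS_r(\R^d,\kappa)$ we have $\EH(X,Y)\leq \max(c(X),c(Y))<\infty$, since every periodic point set is Delone and hence has finite covering radius.

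The only nontrivial axiom is separation: $\EH(X,Y)=0$ must force $X$ and $Y$ to be isometric. This is the step where the hypothesis $p(X),p(Y)>r$ is used. By the preceding proposition, the identity $(\PPS_r(\R^d),\EH)\to(\PPS_r(\R^d),\EB)$ is a local isometry for distances strictly less than $r/2$. In particular, $\EH(X,Y)=0<r/2$ implies $\EB(X,Y)=0$. Now recall from the discussion after Theorem \ref{thm:isometry_extension} that $\EB$ is an extended metric on the space of isometry classes of periodic point sets: if $\EB(X,Y)=0$, then by Propositions \ref{PropDbMIN} and \ref{PropDebMIN} the infimum is attained, providing an isometry $\psi\in\Iso(\R^d)$ with $d_B(X,\psi(Y))=0$, whence $X=\psi(Y)$. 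Thus $X$ and $Y$ represent the same isometry class.

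I do not anticipate a genuine obstacle here; the corollary is essentially a bookkeeping consequence of the preceding proposition together with the already established fact that $\EB$ separates isometry classes of periodic point sets. The only point that warrants care is the quantitative threshold $r/2$ in the local isometry statement, which must be strictly positive for the argument at $\EH(X,Y)=0$ to apply, and that is guaranteed by the assumption $X,Y\in\PPS_r(\R^d,\kappa)$ with $r>0$.
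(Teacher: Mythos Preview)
Your proposal is correct and matches the paper's intended reasoning: the corollary is stated without proof precisely because it is meant to follow immediately from the preceding local-isometry proposition together with the fact (already noted in the text) that $\EB$ separates isometry classes of periodic point sets via the existence of minimizing isometries. Your write-up simply makes explicit the bookkeeping the authors leave implicit.
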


\subsection{Main results}\label{sec:results}

Let us now enlist the main results of this paper. Theorems \ref{theo:Latt_r_bounded}--\ref{theo:bounded_cell} provide conditions ensuring that particular families of lattices or periodic point sets are bounded with respect to the Euclidean bottleneck distance. Their proofs can be found in Section \ref{sect:BMS}. 

\begin{theorem}\label{theo:Latt_r_bounded}
$(\Latt_r(\R^d,\kappa),\EB)$ is bounded.
\end{theorem}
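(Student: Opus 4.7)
The plan is to reduce the statement to a uniform upper bound on the diameter of the unit cell of every $\Lambda \in \Latt_r(\R^d,\kappa)$, after which Theorem~\ref{theo:bounded_cell} yields the desired $\EB$-boundedness.

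First, I would fix $\Lambda \in \Latt_r(\R^d,\kappa)$ and choose a Minkowski-reduced basis $v_1,\dots,v_d$ of $\Lambda$, ordered so that $\|v_1\| \leq \cdots \leq \|v_d\|$. The hypothesis $p(\Lambda) > r$ forces $\|v_1\| > 2r$, while the density condition gives $\det(\Lambda) = 1/\kappa$. Minkowski's second theorem, together with the standard comparison between successive minima and the norms of a reduced basis, then yields
\[\prod_{i=1}^d \|v_i\| \leq \frac{c_d}{\kappa},\]
for a constant $c_d$ depending only on $d$. Combining this with $\|v_i\| \geq \|v_1\| > 2r$ for each $i$, we deduce that $\|v_i\| \leq M$ for a constant $M = M(d,r,\kappa)$ (e.g.\ $M = c_d / (\kappa (2r)^{d-1})$). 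In particular, $\diam(U(\Lambda)) \leq dM$ uniformly across $\Latt_r(\R^d,\kappa)$.

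Second, every lattice $\Lambda$ is a periodic point set with motif $\{0\}$, so $\Latt_r(\R^d,\kappa)$ sits inside the subfamily of $\PPS(\R^d,\kappa)$ consisting of periodic point sets admitting a representative whose unit cell has diameter at most $dM$. Applying Theorem~\ref{theo:bounded_cell} to this subfamily gives $\EB$-boundedness, which restricts to the desired boundedness of $\Latt_r(\R^d,\kappa)$.

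The main obstacle is that the argument relies on Theorem~\ref{theo:bounded_cell}. A self-contained proof would require, given two lattices $\Lambda, \Xi \in \Latt_r(\R^d,\kappa)$ brought via suitable isometries into a common canonical orientation encoded by their Minkowski-reduced bases, an explicit bijection $f \colon \Lambda \to \Xi$ whose displacement is bounded solely in terms of $d$, $r$, and $\kappa$. The natural construction, in the spirit of Duneau--Oguey, uses the equality $\det(\Lambda) = \det(\Xi) = 1/\kappa$ together with a Hall-type matching argument on a large box partitioned by fundamental parallelepipeds of each lattice, so that the uniform bound $dM$ on the cell diameter translates directly into a uniform bound on the maximal matching displacement.
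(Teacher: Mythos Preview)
Your Minkowski-reduction step is correct and clean: bounding $\diam(U(\Lambda))$ uniformly in terms of $d,r,\kappa$ is a legitimate and elegant reduction that the paper does not use. The paper instead works directly: Lemma~\ref{lem:bounded_packing} builds an explicit sequence of \emph{coset shifts} (sliding cosets along primitive vectors) transforming any $\Lambda\in\Latt_r(\R^d,\kappa)$ into a fixed rescaled integer lattice, tracking at each step a displacement bound that depends only on $d,r,\kappa$. So the two routes diverge at the start --- you reduce to a bounded-cell statement, the paper constructs the bijection by hand.

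The genuine gap, however, is the one you flag yourself: in this paper's logical order, Theorem~\ref{theo:bounded_cell} is \emph{downstream} of Theorem~\ref{theo:Latt_r_bounded}. Its proof passes through Corollary~\ref{cor:covering}, which in turn invokes Theorem~\ref{theo:Latt_r_bounded}. Invoking Theorem~\ref{theo:bounded_cell} here is therefore circular, and your final paragraph does not close the loop. The Duneau--Oguey result only gives \emph{finite} $d_B$-distance between two fixed lattices of equal density, with no uniformity; the ``Hall-type matching on a large box'' sketch does not explain why the matching displacement stays bounded independently of the particular pair $\Lambda,\Xi$. What actually needs to be supplied --- and what Lemma~\ref{lem:bounded_packing} supplies --- is a quantitative, uniform version of Duneau--Oguey, and that is the substantive content of the theorem. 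Once you have bounded the cell diameter via Minkowski, you could feed that bound into a coset-shift argument (the shortest vector has length in $[2r, dM]$, so each shift has controlled cost), but as written the proposal stops short of this.
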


\begin{theorem}\label{theo:bounded_motif}
For every dimension $d$, every density $\kappa$, every relevant $R>0$, and every $N\in\N$, there exists a positive constant $K(d,\kappa,R, N)$ such that if $X_1,X_2\in\PPS^R(\R^d,\kappa)$ have representatives $X_i=\Lambda_i+P_i$ with $\lvert P_i\rvert\leq N$ for $i=1,2$, then $\EB(X_1,X_2)\leq K(d,\kappa,R,N)$.
\end{theorem}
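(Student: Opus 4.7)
The plan is to reduce the statement to Theorem \ref{theo:Latt_r_bounded} via geometric control on the underlying lattices and a passage to a common motif size.

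First, I would establish geometric control on each $\Lambda_i$. Given $X_i = \Lambda_i + P_i$ with $|P_i| \leq N$ and $c(X_i) < R$, the plan is to show that the successive minima $\lambda_1(\Lambda_i) \leq \cdots \leq \lambda_d(\Lambda_i)$ are uniformly controlled:
\[
 r_0(d,\kappa,R,N) \leq \lambda_1(\Lambda_i) \quad\text{and}\quad \lambda_d(\Lambda_i) \leq C_d \cdot N \cdot R.
\]
For the upper bound, I would pick a Minkowski-reduced basis $v_1^{(i)}, \ldots, v_d^{(i)}$ of $\Lambda_i$, let $H = \mathrm{span}(v_1^{(i)}, \ldots, v_{d-1}^{(i)})$, and project onto a unit vector $u$ orthogonal to $H$. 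Then $\pi_u(\Lambda_i) = h\Z$ where $h = u \cdot v_d^{(i)}$, and $\pi_u(X_i) = h\Z + \pi_u(P_i)$ is a $1$-dimensional periodic set of period $h$ with at most $N$ motif points per period. Since projection does not increase distances, $\pi_u(X_i)$ inherits $R$-density from $X_i$, so its largest gap is $\leq 2R$, forcing $h \leq 2NR$. For a Minkowski-reduced basis $h$ is comparable to $\|v_d^{(i)}\|$ up to a dimensional constant $\gamma_d$, giving the bound on $\lambda_d(\Lambda_i)$. The lower bound on $\lambda_1(\Lambda_i)$ then follows from Minkowski's second theorem and $\det(\Lambda_i) = |P_i|/\kappa \geq 1/\kappa$. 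As a consequence, $\diam(U(\Lambda_i)) \leq D(d,\kappa,R,N)$ and $\Lambda_i$ has packing radius $\geq r_0/2$.

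Second, I would make the motif sizes coincide and apply the lattice result. Set $L = \mathrm{lcm}(|P_1|, |P_2|) \leq N^2$ and, for $i=1,2$, replace $\Lambda_i$ by the sublattice $\Lambda_i' = \Z\bigl(\tfrac{L}{|P_i|} v_1^{(i)}, v_2^{(i)}, \ldots, v_d^{(i)}\bigr)$ of index $L/|P_i|$. Then $X_i = \Lambda_i' + P_i''$ with $|P_i''| = L$ and $\det(\Lambda_i') = L/\kappa$, so both $\Lambda_i'$ lie in $\Latt_{r_0/2}(\R^d, \kappa/L)$; moreover $\diam(U(\Lambda_i'))$ is bounded by $L \cdot D$. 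Theorem \ref{theo:Latt_r_bounded} then provides $K_{\mathrm{latt}} = K_{\mathrm{latt}}(d, \kappa/L, r_0/2)$ with $\EB(\Lambda_1', \Lambda_2') \leq K_{\mathrm{latt}}$. Fixing an isometry $\psi \in \Iso(\R^d)$ and a bijection $\varphi \colon \Lambda_1' \to \psi(\Lambda_2')$ nearly realizing this bound, choosing the motifs $P_i''$ inside Voronoi cells of $\Lambda_i'$ around $0$ and taking any bijection $\tau \colon P_1'' \to \psi(P_2'')$, the map $f \colon X_1 \to \psi(X_2)$ defined by $f(\lambda + p) = \varphi(\lambda) + \tau(p)$ is a well-defined bijection with displacement
\[
 \|f(\lambda + p) - (\lambda + p)\| \leq \|\varphi(\lambda) - \lambda\| + \|\tau(p) - p\| \leq K_{\mathrm{latt}} + 2LD,
\]
which gives $\EB(X_1, X_2) \leq K_{\mathrm{latt}} + 2LD =: K(d, \kappa, R, N)$.

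The main obstacle is the first step, and in particular the bound $\lambda_d(\Lambda_i) = O(NR)$. The projection argument succeeds precisely because Minkowski reduction forces $v_d^{(i)}$ to have a substantial component orthogonal to $\mathrm{span}(v_1^{(i)}, \ldots, v_{d-1}^{(i)})$, so that $h = |u \cdot v_d^{(i)}| \geq \|v_d^{(i)}\|/\gamma_d$. Without this near-orthogonality, the projected period $h$ could be much smaller than $\|v_d^{(i)}\|$, and the one-dimensional pigeonhole argument would fail to control the longest successive minimum. A secondary technical point is the passage to sublattices in Step 2: it is crucial that sublattices inherit the packing-radius lower bound (since $\lambda_1(\Lambda_i') \geq \lambda_1(\Lambda_i)$), which is exactly what allows the uniform application of Theorem \ref{theo:Latt_r_bounded} to the common density $\kappa/L$.
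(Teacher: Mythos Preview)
Your argument is essentially correct and reaches the goal, but it follows a different path from the paper's proof, and there are two small slips worth flagging.

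\textbf{Comparison with the paper.} The paper also reduces to the lattice case, but in a more elementary way. Instead of controlling the successive minima of $\Lambda_i$ via Minkowski reduction and Minkowski's second theorem, the paper bounds the covering radius of $\Lambda_i$ directly: if $c(\Lambda_i)>(N+1)R$, one finds an open ball of radius $(N+1)R$ free of $\Lambda_i$ with $0$ on its boundary, lines up $N+1$ disjoint $R$-balls along its diameter through $0$, and uses $c(X_i)<R$ plus pigeonhole on the $N$ motif points to produce a nonzero lattice vector inside the empty ball --- a contradiction. Having $c(\Lambda_i)\le (N+1)R$, the paper then \emph{refines} $\Lambda_i$ (moving the $j$-th coset $\Lambda_i+p_j$ to $\Lambda_i+\tfrac{j}{|P_i|}v_1$, displacement $\le c(\Lambda_i)$) to obtain a lattice $\Lambda_i'$ of density exactly $\kappa$ with $c(\Lambda_i')\le c(\Lambda_i)$, and invokes Corollary~\ref{cor:covering}. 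Your route instead \emph{coarsens} to sublattices of a common density $\kappa/L$ and applies Theorem~\ref{theo:Latt_r_bounded}. Both work; the paper's argument avoids lattice-reduction theory entirely and keeps the density fixed at $\kappa$, which makes the bookkeeping shorter. Your approach, on the other hand, yields explicit control on all successive minima, which is more information than is strictly needed here.

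\textbf{Two minor issues.} First, in your bijection $f(\lambda+p)=\varphi(\lambda)+\tau(p)$, if $\psi(x)=Ax+b$ then $\varphi(\lambda)\in A\Lambda_2'+b$ and you need $\tau(p)\in AP_2''$ (the linear image), not $\psi(P_2'')=AP_2''+b$; otherwise $f$ lands in $\psi(X_2)+b$ rather than $\psi(X_2)$. This is a one-symbol fix and does not affect the displacement estimate. Second, your $L=\operatorname{lcm}(|P_1|,|P_2|)$ depends on the particular pair $X_1,X_2$, so $K_{\mathrm{latt}}(d,\kappa/L,r_0/2)$ is not a priori uniform; since $L$ ranges over finitely many values bounded by $N^2$, simply take the maximum over these (or use $L=\operatorname{lcm}(1,\dots,N)$ throughout) to get a genuine $K(d,\kappa,R,N)$.
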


\begin{theorem}\label{theo:bounded_cell}
For every dimension $d$, every density $\kappa$, and every $S\in\R_{>0}$, there exists a positive constant $K(d,\kappa, S)$ such that if $X_1,X_2\in\PPS(\R^d,\kappa)$ have representatives $X_i=\Lambda_i+P_i$ with $\diam(U(\Lambda_i))\leq S$ for $i=1,2$, then $\EB(X_1,X_2)\leq K(d,\kappa,S)$.
\end{theorem}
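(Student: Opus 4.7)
The plan is to reduce this theorem to Theorem \ref{theo:bounded_motif} by showing that the hypothesis $\diam(U(\Lambda_i))\le S$, together with the density constraint $\den(X_i)=\kappa$, automatically produces uniform upper bounds on both the covering radius of $X_i$ and the cardinality of the motif $P_i$.

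Both bounds will flow from the single observation that $0\in U(\Lambda_i)$ (the vector obtained by taking all coefficients equal to zero), so the diameter bound forces the inclusion $U(\Lambda_i)\subseteq \overline{B(0,S)}$. For the covering radius, since the translates $\lambda+U(\Lambda_i)$ with $\lambda\in\Lambda_i$ tile $\R^d$, every point of $\R^d$ lies within distance $S$ of some element of $\Lambda_i$, hence $c(\Lambda_i)\le S$; fixing any $p\in P_i$ and using that $\Lambda_i+p\subseteq X_i$, the same estimate transfers to give $c(X_i)\le S$. For the motif, the same inclusion yields
$$\det(\Lambda_i) = \Vol(U(\Lambda_i)) \le \omega_d S^d,$$
where $\omega_d$ denotes the volume of the Euclidean unit ball in $\R^d$, so $\lvert P_i\rvert = \kappa\det(\Lambda_i)\le \kappa\omega_d S^d$.

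Setting $R:=S+1$ and $N:=\lfloor \kappa\omega_d S^d\rfloor$, both $X_1$ and $X_2$ lie in $\PPS^R(\R^d,\kappa)$ and admit representatives with $\lvert P_i\rvert\le N$, so Theorem \ref{theo:bounded_motif} produces a constant $K(d,\kappa,R,N)$ bounding $\EB(X_1,X_2)$, which we take as our $K(d,\kappa,S)$. I do not anticipate a genuine obstacle here, since the argument is purely a reduction; its only conceptual content is the realization that a bound on the unit-cell diameter is a sufficiently strong quantitative hypothesis to simultaneously dominate the covering radius and the motif size feeding into the earlier theorem.
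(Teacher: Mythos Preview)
Your reduction is correct. The diameter hypothesis indeed gives $U(\Lambda_i)\subseteq\overline{B(0,S)}$, from which both the covering-radius bound $c(X_i)\le S$ and the motif-size bound $\lvert P_i\rvert=\kappa\det(\Lambda_i)\le\kappa\omega_d S^d$ follow, so Theorem~\ref{theo:bounded_motif} applies.

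The paper proceeds along a different route. Rather than invoking Theorem~\ref{theo:bounded_motif}, it moves every point of the motif $P_i$ inside its fundamental cell to a lattice (as in the proof of Theorem~\ref{theo:connected_component_density}), at $\EB$-cost at most $S$; the resulting lattices $\Lambda_i'$ have the same density $\kappa$ and covering radius at most $S$, so Corollary~\ref{cor:covering} (boundedness of $\Latt^R(\R^d,\kappa)$) finishes the argument. In effect the paper reduces to the lattice case directly, while you reduce to the bounded-motif PPS case. Your route is slightly more economical in that no intermediate construction is needed; the paper's route avoids passing through Theorem~\ref{theo:bounded_motif}, whose proof itself builds an auxiliary lattice and appeals to Corollary~\ref{cor:covering}, so it is one step closer to the underlying boundedness result for lattices. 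Either way the content is the same: a diameter bound on the unit cell dominates all the parameters needed to feed into the lattice boundedness machinery.
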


\begin{remark}   
Note that bounding packing radius, covering radius, or the diameter of the fundamental region is essential for the theorems above.

If we drop these requirements even for $d=2$, then the integer lattice $\Z^2$ and the ``stretched'' integer lattice $a\Z\times \frac{1}{a}\Z$ have the same density but can be very far in $d_{EB}$-distance apart if the positive parameter $a$ is very large or very small.
\end{remark}

We then investigate the following problem: is it true that the space of periodic point sets equipped with the Euclidean bottleneck distance is separable? It turns out that even the space of lattices is not. More precisely, we prove the following, stronger result (see Section \ref{sec:bi-Lipschitz} for its proof).

\begin{theorem}\label{theo:uncountable_family} 
Given dimension $d$ and density $\kappa$, there exist $r,R$ and an uncountable family of lattices $\{\Lambda_\alpha\}_{\alpha\in I}\subseteq\Latt_r^R(\R^d,\kappa)$ and $S\in\R_{>0}$ such that $\EB(\Lambda_\alpha,\Lambda_\beta)\geq S$ for every $\alpha,\beta\in I$.
\end{theorem}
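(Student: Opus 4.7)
My plan is to produce uncountably many sheared lattices whose pairwise $\EB$-distance is bounded below by an explicit constant. Using a Hamel basis argument, I pick an uncountable subset $I \subset [0, 1/2]$ for which $\{1\} \cup I$ is $\Q$-linearly independent; then $\alpha \pm \beta \notin \Q$ for all distinct $\alpha, \beta \in I$. For each $\alpha \in I$, define the sheared lattice
\[
\Lambda_\alpha = \Z e_1 + \Z e_2 + \cdots + \Z e_{d-1} + \Z(\alpha e_1 + e_d) \subset \R^d.
\]
Every $\Lambda_\alpha$ has density $1$, packing radius exactly $1/2$ (achieved by $\pm e_i$, $i<d$), and covering radius bounded by $\tfrac{1}{2}\sqrt{d + 5/4}$ (the half-diameter of the fundamental parallelepiped). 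After rescaling by $\kappa^{-1/d}$, $\{\Lambda_\alpha\}_{\alpha \in I}$ sits inside $\Latt_r^R(\R^d, \kappa)$ for explicit $r, R$.

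The main step is to show $d_B(\Lambda_\alpha, \Lambda_\beta) \geq 1/2$ for distinct $\alpha, \beta \in I$. Suppose a bijection $f : \Lambda_\alpha \to \Lambda_\beta$ satisfies $\sup_x \|x - f(x)\| < 1/2$. Since both lattices project onto $\Z$ in each of the coordinates $x_2, \dots, x_d$, and a Euclidean displacement below $1/2$ cannot change any integer coordinate, $f$ preserves all these coordinates. Within a fiber with fixed $(m_2, \dots, m_d)$, the lattice $\Lambda_\alpha$ restricts to the progression $\{m_1 + m_d\alpha : m_1 \in \Z\}$ in the $x_1$-direction (and similarly for $\Lambda_\beta$ with $\beta$ replacing $\alpha$). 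The displacement bound forces $f$ on this fiber to be an integer shift $m_1 \mapsto m_1 + g_{m_d}$ with $|g_{m_d} - m_d(\beta - \alpha)| < 1/2$. For irrational $\beta - \alpha$, the sequence $\{m_d(\beta - \alpha)\}_{m_d \in \Z}$ is equidistributed modulo $1$, so $\sup_{m_d} \|m_d(\beta - \alpha)\| = 1/2$, contradicting the strict inequality.

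To pass from $d_B$ to $\EB$, I would fix a minimising isometry $\psi$ given by Proposition \ref{PropDebMIN}, so that $\EB(\Lambda_\alpha,\Lambda_\beta) = d_B(\Lambda_\alpha, \psi(\Lambda_\beta))$. The shortest vectors of $\Lambda_\alpha$ are exactly $\{\pm e_1, \ldots, \pm e_{d-1}\}$, all of length $1$. If $d_B(\Lambda_\alpha, \psi(\Lambda_\beta)) < S$ then the unit vectors $\psi(\pm e_i) \in \psi(\Lambda_\beta)$ must lie within $2S$ of some shortest vector of $\Lambda_\alpha$, pinning $\psi$ close to an element of the finite point group generated by signed permutations of $e_1, \ldots, e_{d-1}$ and the reflection in $e_d$. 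Each such point-group element sends $\Lambda_\beta$ to a lattice of the form $\Lambda_{\pm\beta}$, and the $\Q$-irrationality of sums and differences in $I \cup (-I)$ reduces the estimate to the preceding paragraph, yielding $\EB(\Lambda_\alpha, \Lambda_\beta) \geq S$ for a uniform $S > 0$.

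The main obstacle is the quantitative short-vector rigidity step used in the passage from $d_B$ to $\EB$: the isometry $\psi$ need not be exactly a point-group element, and I must control how a small rotation of $\psi$ away from such an element perturbs the level-set structure on $\psi(\Lambda_\beta)$. The argument requires carefully tracking how small tilts of the horizontal hyperplanes of $\psi(\Lambda_\beta)$ enlarge the displacement of the bijection, ensuring that the $1/2$ lower bound from the un-rotated case survives (possibly with a slightly smaller explicit constant) after all isometric perturbations are taken into account.
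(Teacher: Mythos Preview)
Your construction is exactly the paper's (up to the cosmetic choice of which coordinate carries the shear), and your $d_B$ argument is correct. The gap is in the passage to $\EB$, which you flag yourself; it is a genuine obstacle, not merely a technicality.

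The short-vector rigidity route has two concrete problems. First, your claim that ``each such point-group element sends $\Lambda_\beta$ to a lattice of the form $\Lambda_{\pm\beta}$'' is false: a signed permutation exchanging $e_1$ with some $e_j$ ($1<j<d$) moves the shear into the $e_j$-direction, producing a lattice not in your one-parameter family, so the reduction to ``the preceding paragraph'' does not go through as stated. Second, and more seriously, being \emph{close} to a point-group element buys you nothing uniform: a rotation of the linear part of $\psi$ by a tiny angle $\theta$ in, say, the $(e_1,e_d)$-plane makes the images $\psi(n(\beta e_1+e_d))$ drift linearly in $n$ relative to the horizontal layers of $\Lambda_\alpha$, so over long scales any nonzero tilt destroys the fibered structure you need. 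To get a uniform $S$ you must in fact prove that the linear part of $\psi$ sends the horizontal axes \emph{exactly} to horizontal axes, and establishing that is the whole content of the $\EB$ step.

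The paper sidesteps short vectors entirely. Its mechanism (Lemmas~\ref{lem:fractional}, \ref{lem:point14}, \ref{LemDifference}) is: for any isometry $\phi$ and any unit vector $u$ not parallel to one of the horizontal axes $e_1,e_3,\dots,e_d$, the arithmetic progression $\{\phi(0)+nu : n\in\Z\}\subset\phi(\Gamma_\beta)$ already contains a point at distance $\geq 1/4$ from $\Gamma_\alpha$, because the relevant coordinate of that progression hits every length-$1/2$ arc of $\R/\Z$. This forces the linear part of $\phi$ to permute the horizontal axes exactly, after which a short two-dimensional computation with $\phi'(e_2)=\pm e_2$ finishes. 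There is no approximation and no perturbation analysis, and the argument works for every pair $\alpha\neq\beta$ in $[0,1/2]$, so your Hamel-basis restriction (imposed to force $\alpha\pm\beta\notin\Q$) is also unnecessary.
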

In particular, that construction implies that $\Latt_r^R(\R^d,\kappa)$ is not separable, and the same holds for every other family of periodic point sets containing it. Let us recall that a metric space is {\em separable} if it has a countable dense subset. {\color{red} }

Before stating the embeddability results, let us first recall that a map $f\colon(X,d_X)\to(Y,d_Y)$ between metric spaces is a
\begin{compactenum}[(a)]
\item {\em bi-Lipschitz embedding} if there exists $L>0$ such that, for every $x,x^\prime\in X$,
$$L^{-1}\cdot d_X(x,x^\prime)\leq d_Y(f(x),f(x^\prime))\leq L\cdot d_X(x,x^\prime);$$
\item {\em coarse embedding} if there exist two monotonous maps $\rho_-,\rho_+\colon\R_{\geq 0}\to\R_{\geq 0}$ such that $\lim_{t\to\infty}\rho_-(t)=\infty$ and, for every $x,x^\prime\in X$,
$$\rho_-(d_X(x,x^\prime))\leq d_Y(f(x),f(x^\prime))\leq \rho_+(d_X(x,x^\prime)).$$
\end{compactenum}
If we need to specify the constant $L>0$ witnessing that $f$ is a bi-Lipschitz embedding, we say that $f$ is a {\em $L$-bi-Lipschitz embedding}. Similarly, in order to emphasize the two functions $\rho_-$ and $\rho_+$, called {\em control functions},  we call $f$ a {\em $(\rho_-,\rho_+)$-coarse embedding}.

Let us enlist a few immediate facts concerning the just-defined embeddings.
\begin{remark}\label{rem:embeddings}
\begin{compactenum}[(a)]
\item A bi-Lipschitz embedding is, in particular, a homeomorphism onto its image. Thus, as a consequence, we have that a non-separable metric space cannot be bi-Lipschitz embedded into any separable spaces.
\item Clearly, a  bi-Lipschitz embedding is, in particular, a coarse embedding.
\item If a metric space is bounded, then the map collapsing it to a one-point space is a  coarse embedding.
\end{compactenum}
\end{remark}

Let us recall that a Banach space $(A,\lvert\lvert\cdot\rvert\rvert)$ is {\em uniformly convex} if, for every $0<\varepsilon\leq 2$, there exists $\delta>0$ so that, for any two vectors $x,y\in A$ with $\lvert\lvert x\rvert\rvert=\lvert\lvert y\rvert\rvert=1$, the condition $\lvert\lvert x- y\rvert\rvert\geq\varepsilon$ implies that $\lvert\lvert(x+y)/2\rvert\rvert\leq 1-\delta$. Hilbert spaces are, in particular, uniformly convex Banach spaces.

\begin{theorem}\label{theo:no_coarse_embedding}
There is no coarse embedding of $\PPS_r(\R^d,\kappa)$ into a uniformly convex Banach space. Similarly, there is no coarse embedding of $\PPS^R(\R^d,\kappa)$ into a uniformly convex Banach space.
\end{theorem}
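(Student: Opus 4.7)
The strategy is to exhibit inside $\PPS_r(\R^d,\kappa)$ a family of finite metric subspaces whose large-scale structure is known to obstruct coarse embedding into any uniformly convex Banach space. The natural candidates are suitably scaled $\ell^\infty$-cubes of growing dimension (equivalently, a discrete analogue of $c_0$), whose non-embeddability in uniformly convex Banach spaces is a classical consequence of metric cotype (Mendel--Naor) or of Kalton's theorem that $c_0$ does not coarsely embed into reflexive Banach spaces.

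For each $n\in\N$ I would proceed as follows. Fix a lattice $\Lambda_n$ of density $\kappa$ whose fundamental cell $U(\Lambda_n)$ is elongated in one direction so that $n$ widely spaced ``slot'' positions $q_1,\dots,q_n$ fit inside it; complement this by a fixed ``background'' set $B$ of motif points so that $X_0:=\Lambda_n+(\{q_i\}_i\cup B)$ lies in $\PPS_r(\R^d,\kappa)$. Choose weights $0<t_1<t_2<\cdots<t_n$ and, for each sign pattern $\sigma\in\{-1,0,+1\}^n$, set
\[
X_\sigma \;=\; \Lambda_n + \bigl(\{q_i+\sigma_i t_i v\}_{i=1}^n\cup B\bigr),
\]
where $v$ is a fixed unit vector orthogonal to the elongation direction. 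The identity bijection immediately gives the upper bound $\EB(X_\sigma,X_\tau)\le \max_{i:\sigma_i\ne\tau_i}t_i$. I would then match this with a comparable lower bound via a rigidity statement: any isometry $\psi\in\Iso(\R^d)$ that approximately realizes $\EB(X_\sigma,X_\tau)$ must be within $O(\max_i t_i)$ of a $\Lambda_n$-symmetry, because the background $B$ together with the spread-out slots pins down the lattice. Hence the accompanying bottleneck bijection essentially respects the slot decomposition, forcing displacement of at least $t_i$ at each index $i$ where $\sigma_i\ne\tau_i$. This rigidity claim is the main obstacle of the proof and requires carefully controlling how isometries can permute or ``reshuffle'' the periodic configuration at scales comparable to the perturbations.

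The family $\{X_\sigma\}_{\sigma\in\{-1,0,+1\}^n}$ is therefore bi-Lipschitz, with constants independent of $n$, to the weighted $\ell^\infty$-cube on $\{-1,0,+1\}^n$. Letting $n\to\infty$ and the weights $t_i$ grow so that diameters tend to infinity, we obtain inside $\PPS_r(\R^d,\kappa)$ a coarsely embedded ``$c_0$-like'' structure, which by the aforementioned classical obstructions cannot coarsely embed into any uniformly convex Banach space; hence neither can $\PPS_r(\R^d,\kappa)$. The argument for $\PPS^R(\R^d,\kappa)$ is parallel: one arranges $\Lambda_n$ to have small enough fundamental cell (so that the covering radius bound $c(X)<R$ is automatic from the lattice alone) while still perturbing a growing number of motif points to produce the same scaled-cube obstruction. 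In both cases the construction remains within a single connected component of $\EB$ by Theorem \ref{theo:connected_component_density}, which is essential for the coarse embedding question to be well-posed.
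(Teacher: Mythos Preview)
Your outline has the right flavor---embed finite $\ell^\infty$-grids uniformly into the space of periodic point sets and invoke a known obstruction---but two essential ingredients are missing, and without them the argument does not close.

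\textbf{The finite pieces must be far apart.} Having, for each $n$, a family $\{X_\sigma\}_{\sigma}$ uniformly bi-Lipschitz to a cube is not enough: any bounded metric space coarsely embeds into a point, so finite subsets never obstruct by themselves. What you need is that the cubes for different $n$ sit in $\PPS_r(\R^d,\kappa)$ as a \emph{coarse disjoint union}, i.e.\ $\dist(\{X_\sigma\}_{n},\{X_\tau\}_{n'})\to\infty$ as $n+n'\to\infty$. You do not address this at all, and it is where most of the work lies. The paper handles it by a careful recursive choice of auxiliary parameters: for $\PPS_r$ it forces the covering radii of the constructions at stage $n$ to be much larger than at all previous stages (Lemma~\ref{lemma:c_implies_bounds_on_dB}); for $\PPS^R$ it packs a block $A_{n+1}$ of so many points into an $\varepsilon$-interval that no bijection to any earlier-stage set can have small displacement (Lemma~\ref{lemma:too_many_points}). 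Your sentence about $\PPS^R$ (``arrange $\Lambda_n$ to have small enough fundamental cell'') does not provide any such separation mechanism; indeed, if you keep $\diam U(\Lambda_n)$ uniformly bounded you land in the \emph{bounded} regime of Theorem~\ref{theo:bounded_cell}, which is the opposite of what you want.

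\textbf{The obstruction you invoke is not the right one.} Kalton's theorem concerns the infinite-dimensional space $c_0$, and you never build a single embedded copy of $c_0$; you build finite cubes over different lattices $\Lambda_n$ with no nesting. The Mendel--Naor cotype obstruction does apply to sequences of finite grids, but it requires grids of the form $\{0,\dots,m\}^n$ with both $m$ and $n$ going to infinity; your coordinate sets $\{-1,0,+1\}$ have only three values, and with increasing weights $t_1<\cdots<t_n$ the resulting metric $\max_i t_i|\sigma_i-\tau_i|$ can even become an ultrametric (e.g.\ $t_i=2^i$), hence isometrically Hilbert-embeddable. The paper sidesteps this by first showing $\PPS_r(\R^d,\kappa)$ contains a coarse disjoint union of \emph{every} sequence of finite metric spaces (via the grids $C^n_{t,m}$ with both $m$ and $n$ varying) and then applying Lafforgue's expander sequence (Theorem~\ref{theo:Laf}).

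In short, your slot-perturbation idea and the rigidity heuristic are close in spirit to the paper's constructions $\phi^n_{t,m}$ and $\psi^n_m$, but the proof needs (i) a concrete mechanism to push the stage-$n$ families to infinity from the earlier ones, which is entirely different for $\PPS_r$ versus $\PPS^R$, and (ii) a correct finite obstruction (Lafforgue expanders, or Mendel--Naor grids with growing side length), not three-point weighted cubes or an appeal to Kalton.
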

The conclusions of Theorem \ref{theo:no_coarse_embedding} hold if the spaces are endowed with either the bottleneck or the Euclidean bottleneck distances.

\begin{remark}
We do not emphasize the bounds on the packing and covering radii in the theorem above. Provided the density $\kappa$ is fixed, our proofs in Section \ref{sec:coarse} show that the statement holds for sufficiently small packing radius $r$ and sufficiently large covering radius $R$. Moreover, $r$ can be chosen $\epsilon$-close to the best packing radius and $R$ can be chosen $\epsilon$-close to the best covering radius for $d$-dimensional lattices of density $\kappa$.

We note that we cannot use all radii as, for example, in dimensions 8 and 24, the lattice $E_8$ and the Leech lattice $\Lambda_{24}$ are the only periodic point sets with maximal densities in the corresponding dimensions due to Viazovska \cite{Via17} and Cohn, Kumar, Miller, Radchenko, and Viazovska \cite{CKMRV17}.
\end{remark}

We summarize some consequences of the above theorems in Table \ref{table:embeddability_results}. In particular, we enlist those regarding the existence of particular embeddings of subspaces of $\PPS(\R^d,\kappa)$ into Hilbert spaces. Theorem \ref{theo:Latt_r_bounded} implies that the map collapsing $\Latt_r(\R^d,\kappa)$ to a point is a coarse embedding. The same statement holds for subspaces of $\PPS(\R^d,\kappa)$ with bounded either unit cells  or cardinality of the motifs according to Theorems \ref{theo:bounded_motif} and \ref{theo:bounded_cell}. Since $\Latt_r^R(\R^d,\kappa)$ and the spaces of periodic point sets containing it as a subspace are not separable (Theorem \ref{theo:uncountable_family}), they cannot be bi-Lipschitz embedded into any separable space (in fact, bi-Lipschitz embeddings are topological embeddings), and so in particular, into any separable Hilbert space. Finally, Theorem \ref{theo:no_coarse_embedding} states that every space of periodic point sets containing either $\PPS_r(\R^d,\kappa)$  or $\PPS^R(\R^d,\kappa)$ cannot be coarsely embedded (and, thus, in particular, they cannot be bi-Lipschitz embedded) into any Hilbert space.

	\begin{table}[h]
	\centering
		\begin{tabular}{ c | c c || c | c c}
			lattices & bi-Lipschitz & coarse & periodic point sets & bi-Lipschitz & coarse\\
			\hline
			$\Latt(\R^d)$ & no ($\omega$)  &  & $\PPS(\R^d)$ & no & no  \\ 
			$\Latt_r(\R^d)$ & no ($\omega$) & yes & $\PPS_r(\R^d)$ & no & no \\  
			$\Latt^R(\R^d)$ & no ($\omega$) & yes & $\PPS^R(\R^d)$ & no & no  \\
			$\Latt_r^R(\R^d)$ & no ($\omega$) & yes & $\PPS_r^R(\R^d)$ & no ($\omega$) & \\
			\begin{tabular}{@{}c@{}}$\Latt(\R^d)$ with\\bounded diameter\\unit cells\end{tabular}  & no ($\omega$) &  yes & \begin{tabular}{@{}c@{}}$\PPS(\R^d)$ with\\bounded diameter\\unit cells\end{tabular} & no ($\omega$) & yes \\
			& & & \begin{tabular}{@{}c@{}}$\PPS(\R^d)$ with\\bounded cardinality\\motifs\end{tabular} & no ($\omega$) & yes
		\end{tabular}
		\caption{In this table we collect answers (formally stated in Theorems \ref{theo:bounded_cell}--\ref{theo:uncountable_family} and \ref{theo:no_coarse_embedding} and Remark \ref{rem:embeddings}) to the following question: can a subspace $\mathcal X$ of $\PPS(\R^d,\kappa)$ equipped with either $d_B$ or $\EB$ be bi-Lipschitz or coarsely embedded into a Hilbert space? A ``no'' with an $\omega$ between brackets stands for the non-embeddability if the Hilbert space has a countable base.}\label{table:embeddability_results}
	\end{table}
	
The following question remains open.	
\begin{question}
Can $\Latt(\R^d,\kappa)$ and $\PPS_r^R(\R^d,\kappa)$ be coarsely embedded into a Hilbert space? 
\end{question}

\section{Boundedness}
\label{sect:BMS}

The main purpose of this section is to prove Theorem \ref{theo:Latt_r_bounded}. We start by proving some preliminary lemmas for lattices with arbitrary densities. We assume that $d\geq 2$.

One of our main tools is splitting lattice $\Lambda$ into {\it cosets} with respect to a sublattice $\Gamma$ of smaller dimension. We can treat $\Lambda$ and $\Gamma$ as groups under addition of vectors, and in that setting $\Gamma$ is a subgroup of $\Lambda$. Then the cosets are defined in the same way as cosets with respect to a subgroup.

\begin{lemma}\label{lem:projection}
    Let $\Lambda$ be a $d$-dimensional lattice with density $\rho$ and packing radius $r$. Suppose $v\in \Lambda$ is its shortest vector and $\pi$ is a hyperplane such that $v\notin\pi$. Let us denote by $\alpha\in(0,\pi/2]$ the angle between $v$ and $\pi$.

    Then the projection of $\Lambda$ on $\pi$ along $ v$ is a $(d-1)$-dimensional lattice with density $2\rho r\sin \alpha $ and packing radius at least $\frac{r\sqrt 3}{2}$.
\end{lemma}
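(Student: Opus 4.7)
The plan is to handle the three assertions---lattice structure, density, and packing radius---in order, with the packing bound as the technical core. Let $n$ be a unit normal to (a translate through the origin of) $\pi$; since $v \notin \pi$, one has $\lvert\langle v,n\rangle\rvert = \|v\|\sin\alpha = 2r\sin\alpha \neq 0$. The projection along $v$ is then the $\R$-linear map $\pi_v\colon\R^d\to\pi$ given by $\pi_v(x) = x - \frac{\langle x,n\rangle}{\langle v,n\rangle}\,v$, and its kernel restricted to $\Lambda$ equals $\Lambda \cap \R v = \Z v$ because $v$ is a shortest lattice vector. Hence $\pi_v(\Lambda)\cong \Lambda/\Z v$ is a rank-$(d-1)$ free abelian subgroup of $\pi$; once the packing bound is established, discreteness is automatic and $\pi_v(\Lambda)$ is a bona fide $(d-1)$-dimensional lattice.

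For the density, I would pick any fundamental domain $F \subset \pi$ for $\pi_v(\Lambda)$. The prism $F + [0,1)v$ is then a fundamental domain of $\Lambda$ in $\R^d$ whose $d$-dimensional volume factors as $\Vol_{d-1}(F)$ times the height of the prism, namely the norm of the component of $v$ normal to $\pi$, which equals $\|v\|\sin\alpha = 2r\sin\alpha$. Equating the total volume with $1/\rho$ yields the density $1/\Vol_{d-1}(F) = 2r\rho\sin\alpha$ for $\pi_v(\Lambda)$.

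For the packing radius, fix $\lambda \in \Lambda\setminus\Z v$ and set $w := \pi_v(\lambda)$; the task is $\|w\|\geq r\sqrt{3}$. The key geometric observation is that $w$ is the unique intersection of the affine line $L := \lambda + \R v$ with $\pi$, and since both $0$ and $w$ lie in $\pi$, $\|w\|$ is a genuine Euclidean distance in $\R^d$ from the origin to a specific point of $L$. Hence $\|w\| \geq \dist(0,L) = \|\lambda^*\|$, where $\lambda^* \in L$ denotes the foot of the perpendicular from the origin, characterised by $\lambda^* \perp v$. Lattice points on $L$ are spaced by $\|v\| = 2r$, so there exists $\lambda' = \lambda + kv \in \Lambda$ with $\|\lambda' - \lambda^*\| \leq r$. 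Because $\lambda^* \perp v$ while $\lambda' - \lambda^*$ is parallel to $v$, Pythagoras gives $\|\lambda'\|^2 = \|\lambda^*\|^2 + \|\lambda' - \lambda^*\|^2 \leq \|\lambda^*\|^2 + r^2$. Combined with $\|\lambda'\| \geq 2r$ (the coset $\lambda + \Z v$ avoids the origin, so $\lambda' \neq 0$), this forces $\|\lambda^*\|^2 \geq 3r^2$, and hence $\|w\| \geq r\sqrt{3}$.

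The step I expect to be the main obstacle is precisely this packing-radius estimate. The tempting shortcut of replacing $\pi_v$ by the orthogonal projection $P\colon\R^d\to\pi$ bounds the wrong quantity: it only delivers $\|P(\lambda)\|^2 \geq 4r^2 - r^2\sin^2\alpha$, which concerns $\|P(\lambda)\|$ rather than $\|\pi_v(\lambda)\|$ and degenerates as $\alpha \to 0$. The decisive move is the inequality $\|w\| \geq \dist(0,L)$, which converts the non-orthogonal projection along $v$ into a perpendicular-distance problem for the line $L$ and thereby produces a clean $\alpha$-independent bound via the shortest-vector property.
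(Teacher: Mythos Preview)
Your proof is correct and follows essentially the same approach as the paper. Both arguments compute the density via the volume of a fundamental parallelepiped/prism, and both bound the packing radius by first showing that the perpendicular distance between any two parallel $\Z v$-cosets is at least $r\sqrt{3}$ (your Pythagoras step with $\lambda'$ and $\lambda^*$ is exactly the paper's ``triangle with base $2r$ and sides $\geq 2r$ has height $\geq r\sqrt{3}$'' observation), and then noting that oblique projection along $v$ onto $\pi$ cannot shrink that distance---your inequality $\|w\|\geq \dist(0,L)$ is precisely what the paper means by ``the distance between projections of these two lines along $v$ cannot decrease.'' Your write-up is simply more explicit.
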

\begin{proof}
    If we choose a basis of $\Lambda$ that includes $ v$, then the projection $\Lambda'$ is a lattice generated by the projected vectors of the basis (except $ v$ which is projected into the zero vector).

    The statement about density follows from the same basis. The volume of the parallelepiped spanned by this basis of $\Lambda$ is $\sfrac{1}{\rho}$. Projection of this parallelepiped on a hyperplane orthogonal to $ v$ has $(d-1)$-volume $\frac{1}{\rho | v|}$ and therefore its projection on $\pi$ along $ v$ has $(d-1)$-volume $\frac{1}{\rho | v|\sin \alpha }$. This gives the claimed density as $| v|=2r$.

    In order to establish the bound for packing radius, we split $\Lambda$ into cosets parallel to $ v$. Each such coset belongs to a line in $\mathbb{R}^d$ parallel to $ v$ and results in one point of the projected lattice $\Lambda'$.

    The distance between two such lines is at least $r\sqrt 3$ as there are two points of $\Lambda$ with distance less than $2r$ otherwise. Indeed, if in Figure \ref{fig:projection} the base of the triangle is $2r$, and two sides opposite to acute angles next to the base are at least $2r$, then the height is at least $r\sqrt 3$. The distance between projections of these two lines along $ v$ can not decrease.
    \begin{figure}
        \centering
        \includegraphics[width=0.4\textwidth]{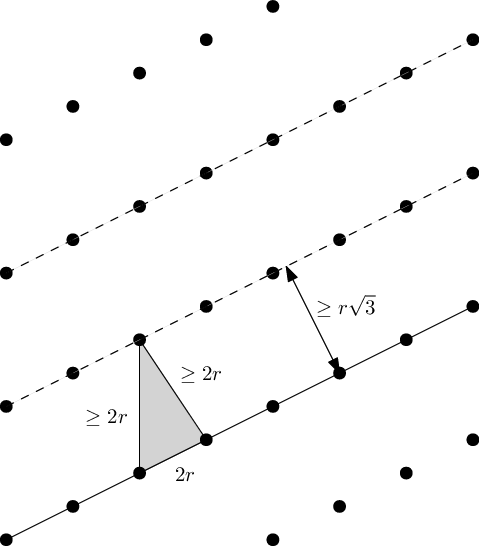}
        \caption{Cosets of $\Lambda$ parallel to the shortest vectors $ v$.}
        \label{fig:projection}
    \end{figure}
\end{proof}

Our next goal is to show that the $d_B$-distance between two $d$-dimensional lattices of density $1$ with certain common bound on the packing radii does not exceed a constant that depends on the dimension $d$ and that radius.

The approach is similar to the one used by Duneau and Oguey \cite{DunOgu} to prove that such two lattices are at finite bounded distance but we track the distances for bijections used in the process.

It is enough to prove a similar statement for one lattice and the $d_B$-distance to the integer lattice $\Z^d$.

\begin{lemma}\label{lem:bounded_packing}
    Let $\Lambda$ be a $d$-dimensional lattice of packing radius at least $r$ and density $\kappa=\rho^d$. Then there exists a constant $c=c(d,r,\rho)$ such that 
    $$d_B(\Lambda,\frac{1}{\rho}\Z^d)\leq c.$$
    Additionally, we can take the constant $c$ to be non-increasing in $r$ and $\rho$.
\end{lemma}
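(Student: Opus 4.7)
We argue by strong induction on the dimension $d$. The base case $d=1$ is immediate: any one-dimensional lattice of density $\rho$ equals $\frac{1}{\rho}\mathbb{Z}+t$ with $t\in[0,\frac{1}{\rho})$, and the shift map $\frac{n}{\rho}+t\mapsto\frac{n}{\rho}$ is a bijection with uniform displacement at most $\frac{1}{\rho}$, giving $c(1,r,\rho)=\frac{1}{\rho}$.

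For the inductive step, the strategy is projection plus fiberwise lifting. Pick a shortest vector $v$ of $\Lambda$; then $|v|=2r_0$ with $r_0\geq r$, and by Minkowski's first theorem $|v|\leq\gamma_d/\rho$ for a dimensional constant $\gamma_d$ (using $\det\Lambda=\rho^{-d}$). Let $\pi$ be the hyperplane through the origin orthogonal to $v$, and let $\Lambda'$ be the projection of $\Lambda$ along $v$ onto $\pi$. By Lemma \ref{lem:projection}, $\Lambda'$ is a $(d-1)$-dimensional lattice of density $|v|\rho^d$ and packing radius at least $r\sqrt{3}/2$. Fixing an isometry $\pi\cong\mathbb{R}^{d-1}$, the inductive hypothesis supplies a bijection $g\colon\Lambda'\to M'$ with displacement at most $c(d-1,\,r\sqrt{3}/2,\,\sigma)$, where $\sigma=(|v|\rho^d)^{1/(d-1)}$ and $M'\subset\pi$ is isometric to $\frac{1}{\sigma}\mathbb{Z}^{d-1}$.

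Next, set $N=M'\oplus\mathbb{Z}v$; this is an orthogonal product lattice of density $\rho^d$ in $\mathbb{R}^d$. Decomposing $\Lambda=\bigsqcup_{p\in\Lambda'}(p+s_p\hat{v}+\mathbb{Z}v)$ with $s_p\in[0,|v|)$ chosen as the canonical coset shift, define the lift $p+s_p\hat{v}+kv\mapsto g(p)+kv$. Since $g(p)-p$ lies in $\pi\perp\hat{v}$, the displacement at each point equals $\sqrt{|g(p)-p|^2+s_p^2}\leq c(d-1,\,r\sqrt{3}/2,\,\sigma)+\gamma_d/\rho$; hence $d_B(\Lambda,N)$ is controlled by the inductive constant plus $\gamma_d/\rho$.

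It remains to bound $d_B(N,\frac{1}{\rho}\mathbb{Z}^d)$, and this is the main obstacle. The lattice $N$ is rectangular but oriented along the arbitrary direction $v$, and its per-factor spacings ($\sigma^{-1}$ and $2r_0$) need not equal $\rho^{-1}$; only the total determinants coincide. I would handle this via an explicit rectangular-to-rectangular matching in the Duneau--Oguey spirit: tile $\mathbb{R}^d$ by boxes whose side lengths are common (near-)multiples of the two frames' spacings so that each tile contains the same number of points of $N$ and of $\frac{1}{\rho}\mathbb{Z}^d$, pair points inside each tile, and resolve boundary discrepancies by a controlled flow across adjacent tiles, yielding a bijection whose displacement is bounded by the tile diameter (a function of $r_0$ and $\rho$). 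Combining the three contributions and optimizing over the feasible range $r_0\in[r,\gamma_d/(2\rho)]$ produces the constant $c(d,r,\rho)$, manifestly non-increasing in $r$ and $\rho$ as claimed.
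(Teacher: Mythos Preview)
Your inductive framework through step 4 is correct and close in spirit to the paper's argument. The real gap is step 5: you need $d_B(N,\frac{1}{\rho}\mathbb{Z}^d)$ bounded in terms of $d,r,\rho$ alone, where $N=M'\oplus\mathbb{Z}v$ is rectangular but sits in an \emph{arbitrary} orientation determined by $v$. Your tile-and-flow sketch does not deliver this. Boxes whose sides are ``near-multiples'' of the two spacings will not contain equal numbers of points of $N$ and of $\frac{1}{\rho}\mathbb{Z}^d$---the spacings are generically incommensurable and the two frames are rotated relative to one another---so the per-box discrepancy is of order $(\text{side})^{d-1}$, and making the accumulated boundary flow bounded is precisely the hard content of the Duneau--Oguey/Laczkovich theory. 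In effect, $N$ is just some density-$\rho^d$ lattice with packing radius bounded below in terms of $d,r,\rho$, so bounding $d_B(N,\frac{1}{\rho}\mathbb{Z}^d)$ uniformly is the very statement you are trying to prove; your induction has not actually reduced the problem.

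The paper avoids this with one decisive choice: it projects along $v$ onto a \emph{coordinate} hyperplane $\{x_1=0\}$ (selected so that $\sin\angle(v,\{x_1=0\})\geq 1/\sqrt d$), not onto $v^\perp$. After applying the inductive hypothesis inside $\{x_1=0\}$ and one further coset shift along the sublattice spanned by $e_3,\dots,e_d$, the resulting lattice already contains $e_3,\dots,e_d$, and only a two-dimensional residual in the $(e_1,e_2)$-plane remains. That is dispatched by the explicit base case $d=2$, which the paper handles by three coset shifts with displacement bounded in terms of $r$ and the optimal packing constant $r_2$. Projecting onto $v^\perp$ forfeits this alignment with $\mathbb{Z}^d$ and leaves you with a $d$-dimensional residual as hard as the original lemma.
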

\begin{proof}
Note that the statement is obvious for $d=1$ as there is only one one-dimensional lattice of each positive density so we assume $d\geq 2$ and prove it by induction for such dimensions.

We also notice right away that for every fixed dimension, it is enough to prove the existence statement for $\rho=1$ and the $d_B$ distance to the integer lattice $\Z^d$. Indeed, the existence of the constant $c$ for arbitrary lattice $\Lambda$ with density $\kappa=\rho^d$ follows from the equality
$$d_B\bigg(\Lambda,\frac{1}{{\rho}}\Z^d\bigg)=\frac{1}{\rho}d_B({\rho} \Lambda,\Z^d)$$
where the distance in the right hand side is bounded by $c(d,\rho r, 1)$. The monotonicity in $\rho$ also follows from the same equality because of the factor $\frac1{\rho}$. And the monotonicity in $r$ follows from the property that decreasing $r$ increases the family of lattices we consider, and therefore doesn't decrease $c$ as function of $r$.

We are ready to start our induction argument. The basis of induction is $d=2$. Let $\Lambda$ be a two-dimensional lattice with density 1 and with packing radius at least $r$. Let $ v$ be a shortest nonzero vector of $\Lambda$. We note that there is the maximal packing radius $r_2$ among all two-dimensional lattices of density $1$. This radius is achieved by the hexagonal lattice $\mathsf D_2$
, but we don't need the exact value here. Thus,  $2r\leq | v|\leq 2r_2$.

The process that we use to construct a bijection between $\Lambda$ and $\Z^2$ can be described as follows. At each step, we pick a primitive vector $ x$
in the current lattice, i.e. a vector of a lattice basis, and shift all lattice points along $ x$ to get a new lattice. We will call this process a {\it coset shift along} $ x$. Notice that the new lattice is at $d_B$-distance at most $| x|$ from the  old lattice. The process is illustrated in Figure \ref{fig:coset}.

\begin{figure}
    \centering
    \includegraphics[width=0.45\textwidth]{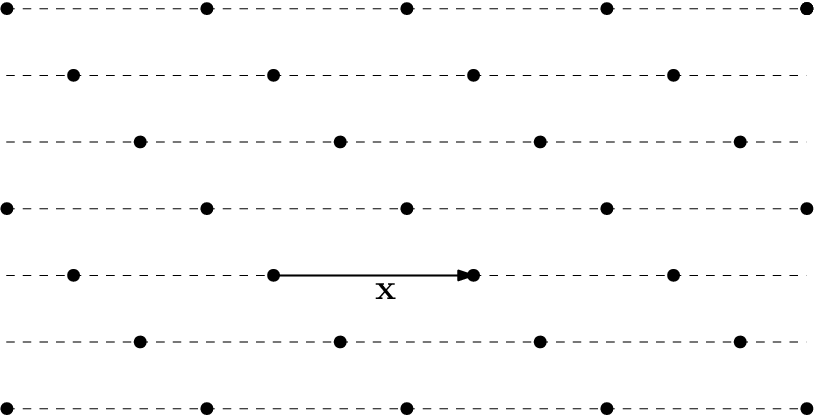}
    \hskip 0.05\textwidth
    \includegraphics[width=0.45\textwidth]{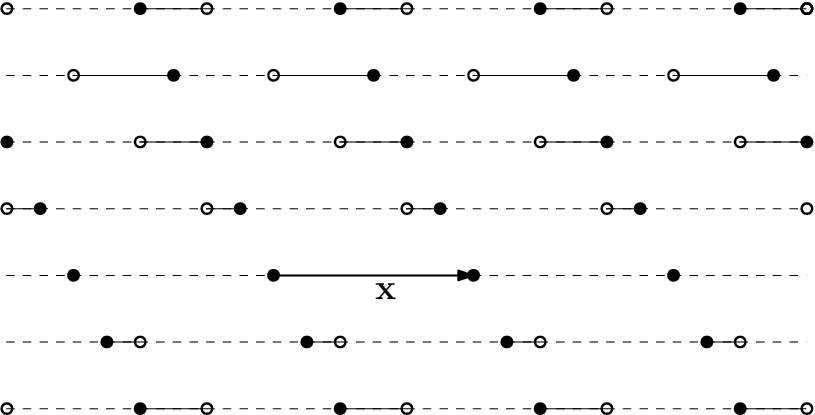}
    \caption{Coset shift. The left panel shows the initial lattice and its vector $ x$ with the corresponding cosets. The right panel shows how each coset shifts from white dots to black dots to form a new lattice.}
    \label{fig:coset}
\end{figure}

Let $ e_1, e_2$ be the standard orthonormal basis of $\Z^2$ (and $\R^2$). We assume that the angle $\alpha$ between $ e_2$ and $ v$ is in $[\sfrac{\pi}{4},\sfrac{\pi}{2}]$ as we can switch the roles of $ e_1$ and $ e_2$ or/and their signs otherwise.

Let $ w= v- e_2$. 
Note that this vector is not parallel to $ v$. We shift cosets in $\Lambda$ along $ v$ to get a new lattice $\Lambda_1$ that contains a vector parallel to $ w$ that connects two lines connecting two consecutive cosets of $\Lambda$ parallel to $ v$, see Figure \ref{fig:2d-shifts}.

\begin{figure}
    \centering
    \noindent
    \frame{\includegraphics[width=0.4\textwidth]{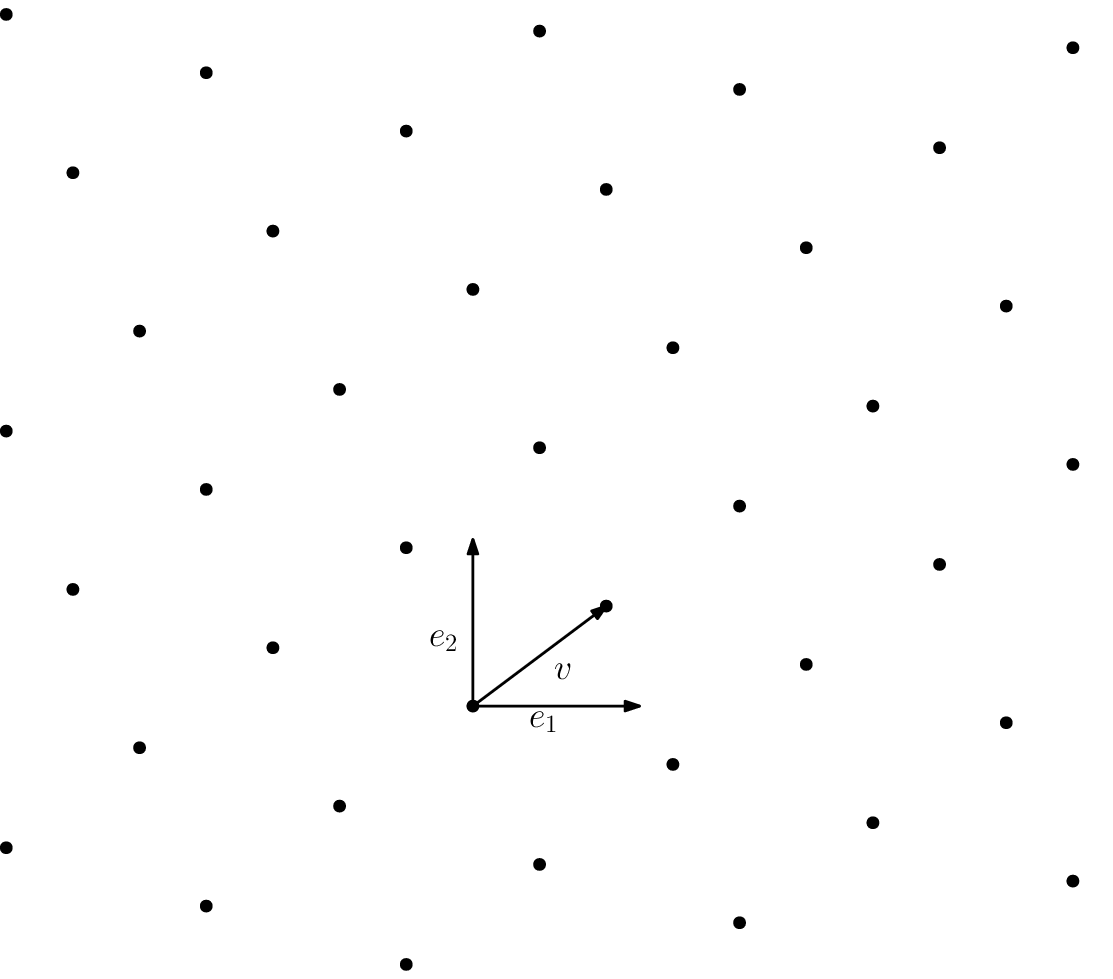}}
    \hskip 0.05\textwidth
    \frame{\includegraphics[width=0.4\textwidth]{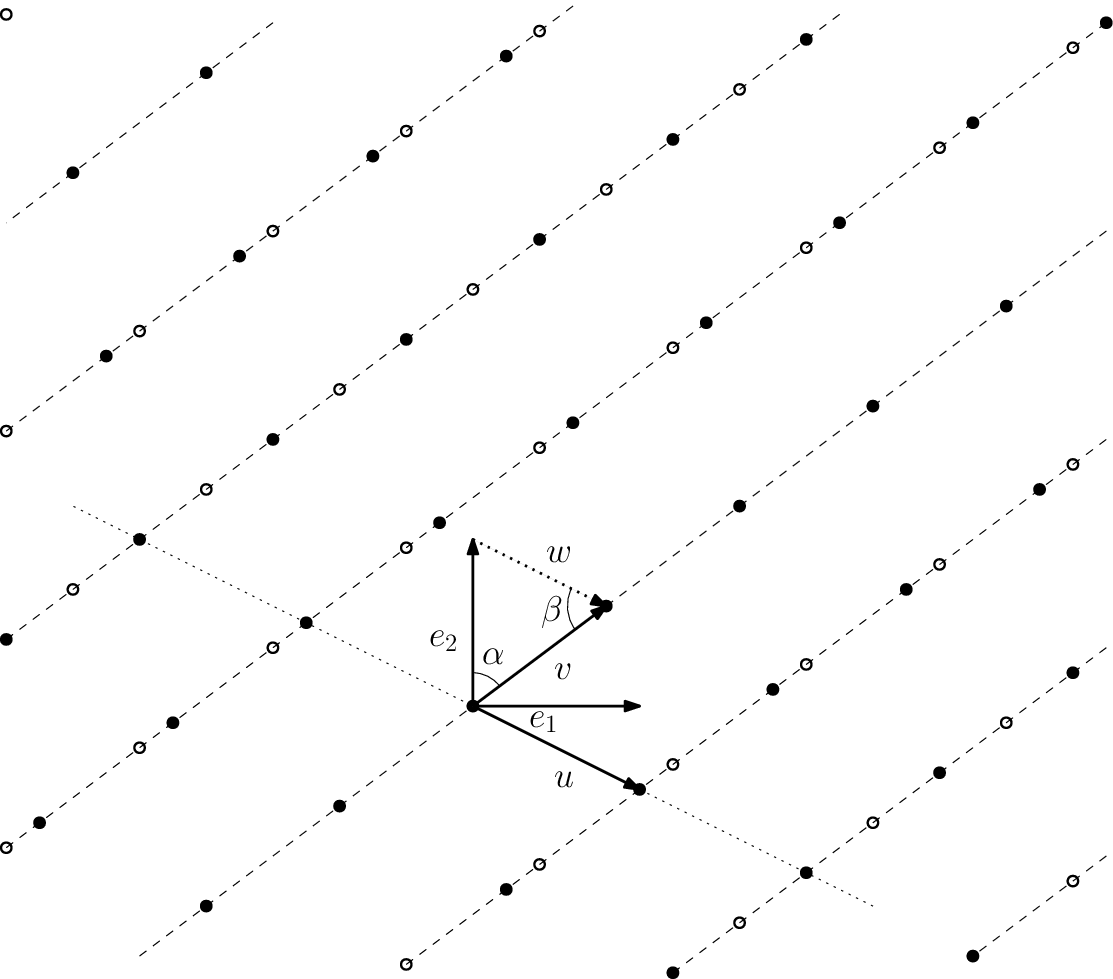}} \\ \vskip1em
    \frame{\includegraphics[width=0.4\textwidth]{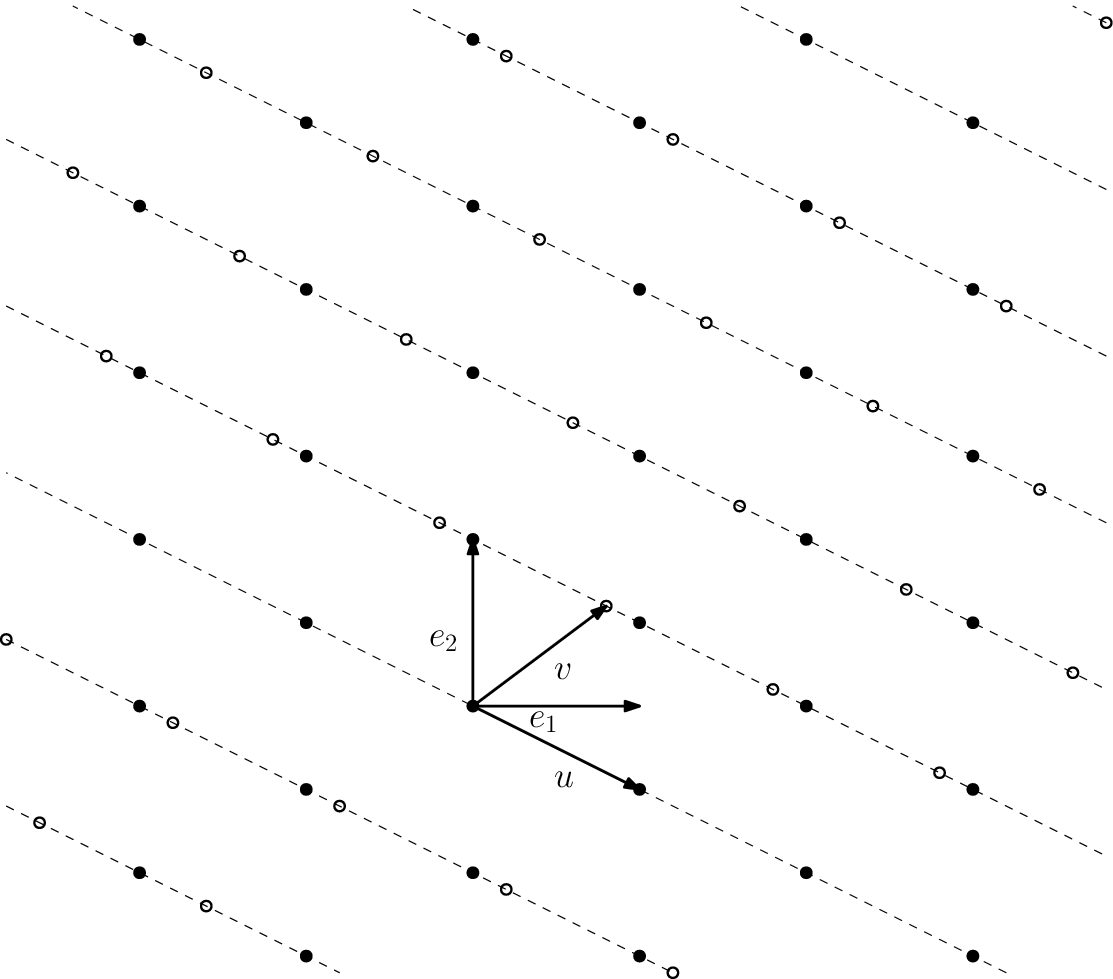}}\hskip 0.05\textwidth
    \frame{\includegraphics[width=0.4\textwidth]{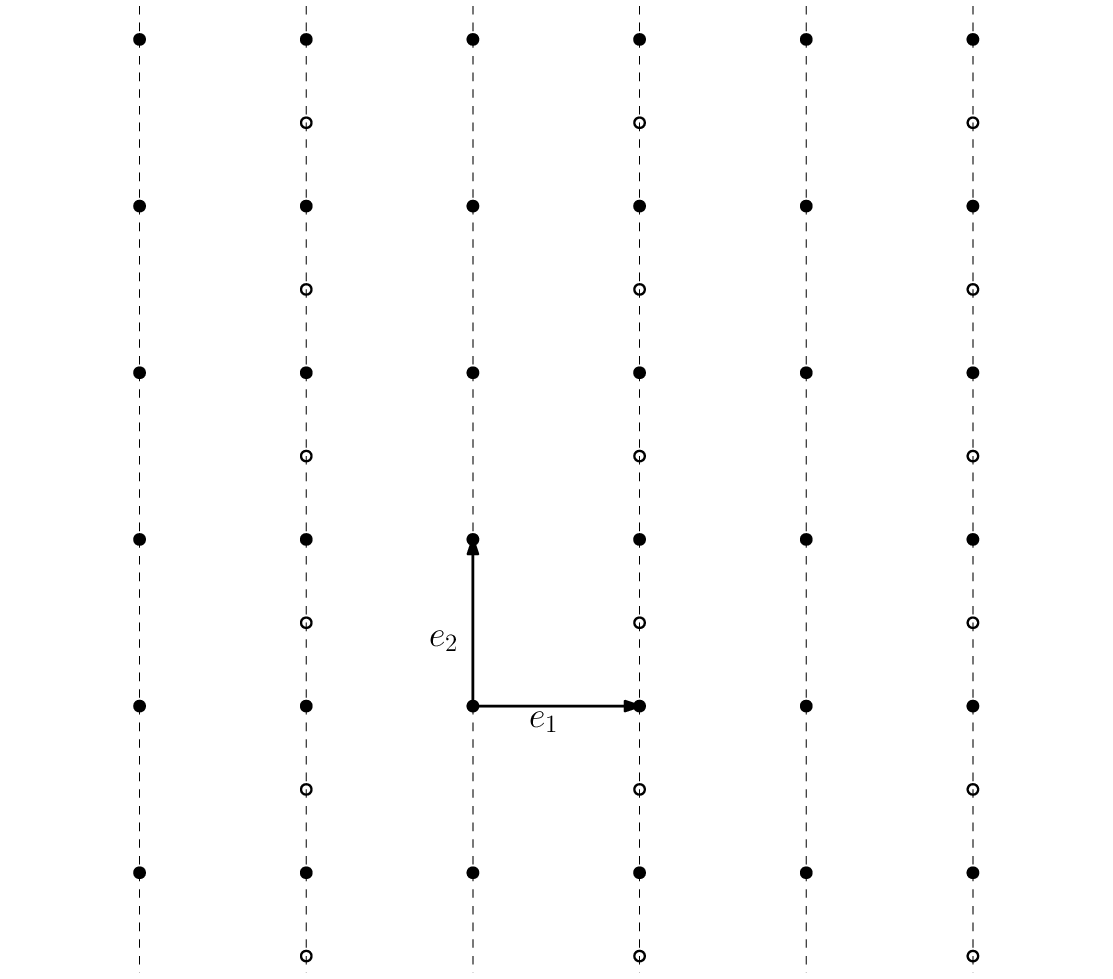}}
    \caption{The sequence of coset shifts for the two-dimensional proof of Lemma \ref{lem:bounded_packing}. The order of the pictures is from left to right and them from top to bottom. In each picture we shift white dots to black dots along dashed lines. The final lattice of solid points is the integer lattice.}
    \label{fig:2d-shifts}
\end{figure}

Let $ u$ be the (shortest) vector of $\Lambda_1$ parallel to $ w$ and let $\beta$ be the angle between $ v$ and $ u$. From the law of sines for the triangle with sides $ e_2$ and $ v$,
$$\sin \beta = \frac{\sin \alpha}{| w|}\geq \frac{\sfrac{1}{\sqrt 2}}{1+| v|}\geq \frac{\sqrt 2}{2+4r_2}.$$

Since $ u$ and $ v$ give a basis of $\Lambda_1$ and the lattice has density $1$, $$| u|=\frac{1}{| v|\sin\beta}\leq \frac{1+2r_2}{r\sqrt 2}.$$

Shifting the cosets of $\Lambda_1$ along $ u$ we get a lattice $\Lambda_2$ that has $ e_2$ as a primitive vector, see Figure \ref{fig:2d-shifts}.

Shifting cosets of $\Lambda_2$ along $ e_2$ we get $\Z^2$, see Figure \ref{fig:2d-shifts}.

Overall, we used three coset shifts using vectors $ v$, $ u$, and $ e_2$. This implies that
$$d_B(\Lambda,\Z^2)\leq | u|+| v|+| e_2|\leq 2r_2+\frac{1+2r_2}{r\sqrt 2}+1$$
which proves the existence of the constant $c(2,r,1)$.

The induction step for $d\geq 3$ uses a similar coset shift technique for shifts within (or along) sublattices of smaller dimension with the bound on the $d_B$-distance ensured by the induction hypothesis.

We notice that there is a supremum $r_d$ of all packing radii of $d$-dimensional lattices of density $1$.

Let $\Lambda$ be a $d$-dimensional lattice of density $1$ and let $ v$ be its shortest vector. Its magnitude satisfies $2r\leq | v|\leq 2r_d$. We look at the coordinate hyperplanes in $\R^d$, without loss of generality we can assume that the angle $\alpha$ between $ v$ and the hyperplane $x_1=0$ satisfies $\sin\alpha\geq \sfrac{1}{\sqrt d}$.

Let $\Lambda'$ be the projection of $\Lambda$ on $x_1=0$ along $ v$. According to Lemma \ref{lem:projection}, $\Lambda'$ is a $(d-1)$-dimensional lattice with density $2r\sin \alpha$ and packing radius at least $\frac{r\sqrt 3}{2}$.

We first apply the coset shift of $\Lambda$ along $ v$ that results in the lattice $\Lambda_1$ spanned by $ v$ and $\Lambda'$.

Our next step is to transform $\Lambda'$ into the lattice $\Lambda''$ spanned by $\frac{1}{2r\sin \alpha}  e_2, e_3,\ldots, e_d$. By induction hypothesis,
$$d_B(\Lambda',\Lambda'')\leq 2c\Big(d-1, \min\Big(\frac{r\sqrt 3}{2},\frac{1}{2},\frac{1}{4r\sin \alpha}\Big),2r\sin \alpha\Big)\leq 2c\Big(d-1,\min\Big(\frac{r}{2},\frac{1}{2},\frac{1}{4r_d}\Big),\frac{r}{\sqrt d}\Big)$$
where the factor $2$ comes from considering $d_B$-distance to the appropriate scaling of integer lattice from both $\Lambda'$ and $\Lambda''$ and the inequality between constants $c$ comes from decreasing both density and allowed packing radius.

We can use this transformation in every coset of $\Lambda'$ in $\Lambda_1$ treating the multiples of $ v$ in these cosets as ``fixed'' origin. This transforms $\Lambda_1$ into the lattice $\Lambda_2$ spanned by $ v$ and $\frac{1}{2r\sin \alpha}  e_2, e_3,\ldots, e_d$.

Next, we shift cosets of $\Lambda_2$ along its $(d-2)$-dimensional sublattice spanned by $ e_3,\ldots, e_d$. 
That is, we shift each coset as a rigid structure but the shifts of separate cosets are different to get a lattice as the outcome. This gives us the lattice $\Lambda_3$ generated by these $d-2$ vectors, $\frac{1}{2r\sin \alpha}  e_2$, and the projection of $ v$ on the two-dimensional plane spanned by $ e_1, e_2$. Since each such coset is geometrically a $(d-2)$-dimensional integer lattice, the magnitude of each shift is at most $\sqrt d$.

We note that the $ e_1$-component of the projection of $ v$ on the plane of $ e_1$ and $ e_2$ is at least $\sfrac{2r}{\sqrt d}$ in absolute value. Thus the sublattice of $\Lambda_3$ in the two-dimensional plane of $ e_1$ and $ e_2$ has packing radius at least $\min(\frac{1}{2r\sin \alpha},\frac{r}{2\sqrt d})\geq \min(\frac{1}{2r},\frac{r}{\sqrt d})$ and density 1 since the density of all our $d$-dimensional lattices are $1$.

Applying the two-dimensional case of our lemma to this sublattice and the corresponding cosets we get the inequality

$$d_B(\Lambda,\Z^d)\leq | v|+2c\Big(d-1,\min\Big(\frac{r}{2},\frac{1}{2},\frac{1}{4r_d}\Big),\frac{r}{\sqrt d}\Big)+\sqrt{d}+c\Big(2,\min\Big(\frac{1}{2r},\frac{r}{\sqrt d}\Big),1\Big)$$
which ensures existence of $c(d,r,1)$ as needed for the induction step.
\end{proof}

\begin{proof}[Proof of Theorem \ref{theo:Latt_r_bounded}.]
The theorem is an immediate corollary of Lemma \ref{lem:bounded_packing} because the lemma claims that the distance between two lattices in  $(\Latt_r(\R^d,\kappa),d_{EB})$ is bounded by some constant depending on $d$, $r$, and $\kappa$.
\end{proof}

Using Theorem \ref{theo:Latt_r_bounded}, the counterpart of that result for lattices with uniformly bounded covering radius can be deduced.
\begin{corollary}\label{cor:covering}
    $(\Latt^R(\R^d,\kappa),d_{EB})$ is bounded.
\end{corollary}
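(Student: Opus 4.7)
The plan is to reduce Corollary \ref{cor:covering} to Theorem \ref{theo:Latt_r_bounded} by showing that the covering-radius bound, together with the fixed density $\kappa$, automatically forces a uniform lower bound on the packing radius. Concretely, I will produce a constant $r_0 = r_0(d,\kappa,R) > 0$ such that $\Latt^R(\R^d,\kappa) \subseteq \Latt_{r_0}(\R^d,\kappa)$; boundedness of the metric then transfers from the larger family to the smaller one by restriction.

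To obtain $r_0$, I would argue through the successive minima $\lambda_1 \leq \cdots \leq \lambda_d$ of $\Lambda$, recalling that $p(\Lambda) = \lambda_1/2$. The first step is an upper bound on $\lambda_d$: fix an orthonormal frame $e_1,\dots,e_d$ and a constant $C > \sqrt{d}$. The hypothesis $c(\Lambda) \leq R$ yields, for each $i$, a vector $v_i \in \Lambda$ with $\lvert v_i - CR\,e_i \rvert \leq R$; writing $v_i = CR\,e_i + w_i$ with $\lvert w_i \rvert \leq R$, the $d \times d$ matrix with columns $v_i$ equals $CR\cdot I + W$, whose perturbation $W$ has Frobenius (hence operator) norm at most $R\sqrt{d} < CR$, and is therefore invertible by the Neumann series. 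Thus $v_1,\dots,v_d$ are linearly independent, and choosing $C = 2\sqrt{d}$ gives $\lambda_d \leq (2\sqrt{d}+1)R$.

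The second step is to invoke the lower half of Minkowski's second theorem,
$$
\lambda_1 \lambda_2 \cdots \lambda_d \;\geq\; \frac{2^d \det(\Lambda)}{d!\, \Vol(B(0,1))}.
$$
Since $\det(\Lambda) = 1/\kappa$ and $\lambda_j \leq \lambda_d \leq (2\sqrt{d}+1)R$ for all $j \geq 2$, this rearranges to
$$
\lambda_1 \;\geq\; \frac{2^d}{d!\, \Vol(B(0,1))\, \kappa \,\bigl((2\sqrt{d}+1)R\bigr)^{d-1}} \;=:\; 2 r_0 \;>\; 0,
$$
giving the inclusion $\Latt^R(\R^d,\kappa) \subseteq \Latt_{r_0}(\R^d,\kappa)$.

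The main obstacle, if any, is bookkeeping: pinning down the dimension-dependent constant $C$ in the linear-independence argument, and invoking Minkowski's second theorem in its correct form. Once $r_0$ is fixed, Theorem \ref{theo:Latt_r_bounded} provides a uniform $\EB$-bound on $\Latt_{r_0}(\R^d,\kappa)$, and since boundedness is inherited by subspaces, the conclusion for $\Latt^R(\R^d,\kappa)$ follows at once.
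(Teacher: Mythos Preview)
Your proposal is correct, and the overall strategy---show that $c(\Lambda)<R$ together with $\den(\Lambda)=\kappa$ forces a uniform lower bound on $p(\Lambda)$, then invoke Theorem~\ref{theo:Latt_r_bounded}---is exactly the paper's. The difference lies in how that lower bound is obtained. The paper argues by contradiction: if $p(\Lambda_n)\to 0$, project each $\Lambda_n$ along its shortest vector onto the orthogonal hyperplane and apply Lemma~\ref{lem:projection}; the projected $(d-1)$-dimensional lattices still have covering radius at most $R$ but density tending to $0$, which is impossible since covering radius $\leq R$ forces at least one point in every cube of side $2R$. Your route is direct and quantitative: bound $\lambda_d$ above using $d$ near-orthogonal lattice vectors produced by the covering condition, then feed this into the lower half of Minkowski's second theorem to extract an explicit lower bound on $\lambda_1=2p(\Lambda)$. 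Your argument yields an explicit $r_0(d,\kappa,R)$ and avoids the projection lemma, at the cost of importing Minkowski's theorem from outside the paper; the paper's argument is self-contained but non-constructive. Both are short and valid.
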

\begin{proof}
    We claim that if the covering radius of a $d$-dimensional lattice with density $\kappa$ is bounded above by $R$, then its packing radius is bounded from below by some constant $r=r(d,R,\kappa)$.

    Assuming the contrary, let $\Lambda_n, n\in \mathbb N$ be a family of lattices in $\Latt^R(\R^d,\kappa)$ with packing radii $r=\frac{1}{n}$, respectively.

    We project each lattice $\Lambda_n$ along its shortest vector on an orthogonal hyperplane. The projected lattices have covering radii at most $R$ and density going 0 as $n$ goes to infinity from Lemma \ref{lem:projection}. This is a contradiction because the ($(d-1)$-dimensional) sets with covering radii at most $R$ must have points in every cube with side $2R$ and therefore have densities bounded away from 0.

    Now it is enough to use the boundedness of $\Latt_r(\R^d,\kappa)$ in the $d_{EB}$-distance.
\end{proof}

\begin{proof}[Proof of Theorem \ref{theo:bounded_motif}.]
We sketch the proof and leave out technicalities.

Recall that we aim to prove existence of $K=K(d,\kappa,R, N)$ such that two sets $X_i\in \PPS^R(\R^d,\kappa)$, $i=1,2$, with at most $N$ points in their motifs satisfy $d_{EB}(X_1,X_2)\leq K$.

First we prove that there exists $R'=R'(d,\kappa,R,N)$ such that the covering radii of $\Lambda_i,i=1,2$, see the statement of Theorem \ref{theo:bounded_motif}), do not exceed $R'$.

We show that the covering radius of $\Lambda_i$ is at most $(N+1)R$ If the covering radius of $\Lambda_i$ is greater than $(N+1) R$, then there is a ball of radius $(N+1) R$ in $\R^d$ with the origin $0\in\R^d$ on the boundary of this ball that does not contain points of $\Lambda_i$ inside. On the other hand, this ball contains $N+1$ nonintersecting balls of radius $R$ aligned along its diameter containing the origin, and since the covering radius of $X_i$ is at most $R$, two of these balls contain points corresponding to the same point of the motif. Then the difference of the said two points lies inside the same empty ball of radius $(N+1) R$.
Hence, a contradiction and the covering radius of $\Lambda_i$ is bounded.

Since we established that the covering radius is bounded, we may transform $X_i$ for $i=1,2$ into some lattices $\Lambda_i'$  by shifting each subset generated by a single point of the motif (each such subset is at bottleneck distance at most $R'$ from $\Lambda_i$) similarly to what we did in the proof of Theorem \ref{theo:connected_component_density}, and can use Corollary \ref{cor:covering} for $\Lambda_i'$.
\end{proof}

\begin{proof}[Proof of Theorem \ref{theo:bounded_cell}.]
Again, we sketch the proof and leave out technicalities.

For $i=1,2$, we move points of the motifs of $X_i$ within the corresponding fundamental cells to get lattices. This changes each $X_i$ by at most $S$ in $d_{EB}$-distance.

The resulting two lattices have equal densities, and diameters of fundamental cells at most $S$ and therefore the packing radii at most $S$. Now we can use Corollary \ref{cor:covering} to get the claim.
\end{proof}

\section{Nonseparability}\label{sec:bi-Lipschitz}

We start by constructing an uncountable family of lattices that we use to prove Theorem \ref{theo:uncountable_family}.

For $d\geq 2$, let $\alpha\in [0,\sfrac 12]$ be a real number. We define the $d$-dimensional lattice $\Gamma_\alpha$ as 
$$\Gamma_\alpha=\mathbb{Z}(e_1, e_2+\alpha e_1, e_3, \ldots,e_{d}).$$
In other words, this lattice has a layered structure where each layer is the $(d-1)$-dimensional integer lattice spanned by vectors $e_1$ and $e_3,\ldots, e_d$ located in the plane $x_2=k$ for some $k\in \mathbb Z$. Distance between hyperplanes that contain two consecutive layers is $1$, and two consecutive layers are shifted by $e_2+\alpha e_1$ with respect to each other.

Also note that the density of $\Gamma_\alpha$ is 1 and does not depend on $\alpha$. Thus, the whole family $\{\Gamma_\alpha\}_{\alpha\in [0,\frac 12]}$ belongs to one family $\Latt_r^R(\R^d,1)$ for appropriate $r$ and $R$.

We will use the following simple property.

\begin{lemma}\label{lem:fractional}
Let $a,\lambda\in\R$. If $\lambda$ is not an integer, then every arithmetic progression $\{a+b\lambda\mid b\in \mathbb Z\}$ contains a number with fractional part in every closed segment of $\mathbb R/\mathbb Z$ of length $\frac12$.
\end{lemma}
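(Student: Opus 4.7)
The plan is to split the argument according to whether $\lambda$ is rational or irrational. In either case, ``fractional part lying in a closed segment of $\R/\Z$ of length $\tfrac12$'' just means that the image of $a + b\lambda$ under the projection $\R \to \R/\Z$ belongs to a prescribed closed arc $I$ of length $\tfrac12$, and we need to produce $b \in \Z$ with this property.

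First, if $\lambda$ is irrational, the classical theorem of Kronecker (or, more strongly, Weyl's equidistribution theorem) tells us that the sequence $(a + b\lambda \bmod 1)_{b \in \Z}$ is dense in $\R/\Z$. The closed arc $I$ has non-empty interior, so it must intersect this dense set, yielding the required $b$.

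Second, suppose $\lambda = p/q$ is rational and not an integer, so $q \geq 2$ and we may assume $\gcd(p,q) = 1$. Then as $b$ ranges over $\Z$, the product $bp$ ranges over all residue classes modulo $q$, so the image of $\{a + b\lambda : b \in \Z\}$ in $\R/\Z$ is the full coset $a + \tfrac{1}{q}\Z \pmod 1$, consisting of $q$ equispaced points with consecutive gap $1/q \leq 1/2$. The complement of $I$ in $\R/\Z$ is an open arc of length $\tfrac12$, and since $\tfrac12 \leq 1/q$ fails for $q \geq 2$, this open arc cannot contain all $q$ points (it is too short to span any configuration of $q \geq 2$ equispaced points covering the circle). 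Hence $I$ meets the coset, producing the desired $b$.

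The argument is essentially routine; the only mild subtlety is the edge case $q = 2$, where the coset is a pair of antipodal points and $I$ has length exactly equal to half the circumference. Since $I$ is closed, even in this extremal situation at least one of the two antipodal points lies in $I$, so no further case analysis is required.
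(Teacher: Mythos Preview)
Your proof is correct and follows essentially the same approach as the paper: split into the irrational case (density in $\R/\Z$) and the rational case (equispaced coset with gap $1/q \leq 1/2$). The paper's version is terser but makes the identical two-case argument. One small wording glitch: the inequality ``$\tfrac12 \leq 1/q$ fails for $q \geq 2$'' is not literally true at $q=2$, but you immediately patch this with the closedness-of-$I$ observation, so the argument stands.
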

\begin{proof}
If $\lambda$ is irrational then the corresponding fractional parts are dense in $[0,1)$. Otherwise, the fractional parts form a discrete set with distance $\frac 1q$ between consecutive points for some integer $q\geq 2$ and the lemma follows as $\frac 12\geq \frac 1q$. 
\end{proof}

\begin{lemma}\label{lem:point14}
Let $u$ be a unit vector not parallel to each $e_1,e_3,e_4,\ldots, e_{d}$. Then for every $x\in\R^d$ and for every $\alpha\in (0,\sfrac12]$, there is a point $y\in \{x+n\cdot u\mid n\in \mathbb Z\}$ such that 
$$\dist(\Gamma_\alpha,y)\geq \frac14.$$

\end{lemma}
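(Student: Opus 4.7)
The plan is to combine a coordinate-wise lower bound on $\dist(\cdot,\Gamma_\alpha)$ with Lemma~\ref{lem:fractional} applied to a suitable arithmetic progression, after a short case split on the coordinates of $u=(u_1,\ldots,u_d)$. The starting point is that every point of $\Gamma_\alpha$ has an integer $j$-th coordinate for each $j\in\{2,3,\ldots,d\}$, so writing $\|t\|$ for the distance from $t\in\R$ to the nearest integer, $\dist(y,\Gamma_\alpha)\geq\|y^{(j)}\|$ for every $y\in\R^d$ and every $j\in\{2,\ldots,d\}$.

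\textbf{Case 1:} Some $j\in\{2,\ldots,d\}$ satisfies $0<|u_j|<1$. Then $u_j$ is a nonzero non-integer, so Lemma~\ref{lem:fractional} with $\lambda=u_j$ yields $n\in\Z$ such that the fractional part of $y_n^{(j)}=x_j+nu_j$ lies in $[1/4,3/4]$, whence $\|y_n^{(j)}\|\geq 1/4$ and the observation above gives $\dist(y_n,\Gamma_\alpha)\geq 1/4$.

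\textbf{Case 2:} Every $u_j$ with $j\geq 2$ lies in $\{-1,0,1\}$. Since $u$ is a unit vector, it must then equal $\pm e_j$ for some single $j\in\{1,\ldots,d\}$; the hypothesis rules out every choice except $u=\pm e_2$, so $y_n=(x_1,\,x_2\pm n,\,x_3,\ldots,x_d)$. If $\|x_2\|\geq 1/4$, any $n$ works. Otherwise $\|x_2\|<1/4$; let $m\in\Z$ be the nearest integer to $x_2$, so $k_n^{\ast}:=m\pm n$ is the nearest integer to $x_2\pm n$. In the formula
\[
\dist(y_n,\Gamma_\alpha)^2=\min_{k\in\Z}\bigl(\|x_1-k\alpha\|^2+(x_2\pm n-k)^2\bigr)+\sum_{i\geq 3}\|x_i\|^2,
\]
the choice $k=k_n^{\ast}$ contributes at least $\|x_1-k_n^{\ast}\alpha\|^2$, while any $k\neq k_n^{\ast}$ forces $|x_2\pm n-k|>3/4$, making that summand alone exceed $9/16>1/16$. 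Hence it suffices to find $n$ with $\|x_1-k_n^{\ast}\alpha\|\geq 1/4$. But $x_1-k_n^{\ast}\alpha=(x_1-m\alpha)\mp n\alpha$ runs through an arithmetic progression of common difference $\pm\alpha$, and since $\alpha\in(0,1/2]$ is a non-integer, Lemma~\ref{lem:fractional} supplies the desired $n$.

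The main subtlety lies in Case 2, specifically in verifying that when $\|x_2\|<1/4$ the alternative values $k\neq k_n^{\ast}$ in the minimum over layer heights are automatically too costly to compete; this is exactly where the threshold $1/4<1/2$ enters, and it explains why the assertion of the lemma is sharp at $1/4$ rather than $1/2$. Everything else reduces to bookkeeping together with two applications of Lemma~\ref{lem:fractional}.
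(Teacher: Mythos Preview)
Your proof is correct and follows essentially the same approach as the paper's: both reduce via coordinate-wise distance bounds and Lemma~\ref{lem:fractional} to the case $u=\pm e_2$, and then apply Lemma~\ref{lem:fractional} to the arithmetic progression with step $\alpha$ to find the required $n$. The only cosmetic difference is that in Case~2 the paper shifts $x$ along $e_2$ onto the nearest layer (a WLOG reduction to $x=\beta e_1$), whereas you keep $x$ general and instead bound the minimum over layer heights $k$ explicitly; these are equivalent ways of dispatching the contribution of the second coordinate.
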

\begin{proof}
First we assume that $d>2$ and reduce the lemma to the two-dimensional case.

We look at the $3$rd coordinate of the points in $\{x+n\cdot u:n\in \mathbb Z\}$. They form an arithmetic progression with difference $\lambda\in [0,1]$ 
where $\lambda$ is the absolute value of the $3$rd coordinate of $u$. If $\lambda$ is not an integer, then this arithmetic progression contains a number with fractional part  in $[\frac 14,\frac 34]$ due to Lemma \ref{lem:fractional}. In this case the distance from  the corresponding point to $\Gamma_\alpha$ is at least $\frac 14$ because the $3$rd coordinate of every point in $\Gamma_\alpha$ is integer. If $\lambda$ is an integer then $\lambda=0$ or $\lambda=1$, but the latter case is impossible because $u$ is not parallel to $e_3$. Thus $\lambda=0$. Similar arguments show that all coordinates of $u$ except the first two are zeros. Changing the corresponding coordinates of $x$ to zeroes we reduce the lemma to the two-dimensional case as this change does not increase the distances to $\Gamma_\alpha$ for all points in $\{x+n\cdot u\mid n\in \mathbb Z\}$.

From now on we assume that $d=2$.

Still, every point in $\Gamma_\alpha$ has integer $2$nd coordinate, so by the same arguments the second coordinate of $u$ must be an integer. If it is $0$, then $u=\pm e_1$ giving a contradiction, so $u=\pm e_2$. Without loss of generality we can assume that $x=\beta e_1$ as shifting the set $\{x+n\cdot u\mid n\in \mathbb Z\}$ along $e_2$ to the closest layer of $\Gamma_\alpha$ parallel to $e_1$ does not increase the distance we are bounding from below.

Since $\alpha\neq 0$, then by Lemma \ref{lem:fractional} we can find an integer $k$ such that distance from $\{k\alpha+n\mid n\in\mathbb Z\}$ to $\beta$ is at least $\frac 14$ as we can find $k$ such that the $\{k\alpha\}$ is in $[\beta +\frac 14,\beta +\frac 34] \mod 1$. Considering the point in $x+k\cdot u=\beta e_1+k e_2$ which is the closest to the corresponding subset $\{k(e_2+\alpha e_1)+n\cdot e_1\mid n\in \mathbb Z\}$ of $\Gamma_\alpha$, we get the desired inequality.
\end{proof}

\begin{lemma}
\label{LemDifference}
Suppose $\alpha,\beta \in [0,\sfrac 12]$ are two different numbers. Then $d_{EB}(\Gamma_\alpha,\Gamma_\beta)\geq \frac 14$.
\end{lemma}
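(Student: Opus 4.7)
The plan is by contradiction. Suppose $\EB(\Gamma_\alpha,\Gamma_\beta)<1/4$; then there exist an isometry $\psi\in\Iso(\R^d)$ and a bijection $f\colon\Gamma_\alpha\to\psi(\Gamma_\beta)$ with $\sup_p\lvert\lvert p-f(p)\rvert\rvert<1/4$. Composing with $\psi^{-1}$ (which preserves distances) gives $d_B(\psi^{-1}(\Gamma_\alpha),\Gamma_\beta)<1/4$, so every point of $\psi^{-1}(\Gamma_\alpha)$ lies within $1/4$ of $\Gamma_\beta$. I will exhibit a point of $\psi^{-1}(\Gamma_\alpha)$ at distance $\geq 1/4$ from $\Gamma_\beta$, yielding the contradiction. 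Since $\EB$ is symmetric and $\alpha\neq\beta$, I may assume $\beta>0$.

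I will write $\psi^{-1}(x)=L(x)+c$ with $L$ a linear isometry, set $H=\operatorname{span}(e_1,e_3,\dots,e_d)$, and split on whether $L(H)=H$. In the first case, some $i\in\{1,3,\dots,d\}$ has $L(e_i)\notin H$, so $u:=L(e_i)$ is a unit vector with nonzero $e_2$-component; thus $u$ is not parallel to any of $e_1,e_3,\dots,e_d$. Applying Lemma \ref{lem:point14} with $\beta\in(0,1/2]$ in place of the lemma's parameter, to $u$ and $x=c$, yields $n\in\Z$ such that $y:=c+nu=\psi^{-1}(ne_i)\in\psi^{-1}(\Gamma_\alpha)$ satisfies $\dist(\Gamma_\beta,y)\geq 1/4$, finishing this case.

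In the second case, $L(H)=H$ together with orthogonality forces $L(e_2)=\sigma e_2$ with $\sigma\in\{\pm 1\}$, and $L(e_1)\in H$ is a unit vector that I write as $\sum_{j\neq 2}a_je_j$. I will analyze the progression $Q_k:=\psi^{-1}(k(e_2+\alpha e_1))=c+k(\sigma e_2+\alpha L(e_1))\in\psi^{-1}(\Gamma_\alpha)$ for $k\in\Z$. If $\lvert c_2-n_0\rvert\geq 1/4$ where $n_0$ is the nearest integer to $c_2$, every $Q_k$ already has $e_2$-distance $\geq 1/4$ to $\Z$, and hence to $\Gamma_\beta$. Otherwise the layered structure of $\Gamma_\beta$ (integer $e_2$- and $e_j$-coordinates for $j\geq 3$, and $e_1$-coordinates in $m\beta+\Z$ on the layer $x_2=m$) pins the nearest $\Gamma_\beta$-point to $Q_k$ to the layer $x_2=\sigma k+n_0$, giving
\[\dist(Q_k,\Gamma_\beta)^2=(c_2-n_0)^2+\dist(\delta+k\gamma,\Z)^2+\sum_{j\geq 3}\dist(c_j+k\alpha a_j,\Z)^2\]
with $\delta:=c_1-n_0\beta$ and $\gamma:=\alpha a_1-\sigma\beta$. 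I will then choose $k$ forcing either the middle summand or some $j\geq 3$ summand to be $\geq 1/16$. If $\gamma\neq 0$, the bounds $\alpha,\beta\in[0,1/2]$ together with $\alpha\neq\beta$ rule out the boundary values $\gamma=\pm 1$, so $\gamma\in(-1,1)\setminus\Z$ and Lemma \ref{lem:fractional} supplies $k$ with $\dist(\delta+k\gamma,\Z)\geq 1/4$. If $\gamma=0$, then $\alpha a_1=\sigma\beta$ forces $\alpha>0$ and $\lvert a_1\rvert=\beta/\alpha<1$ (using $\beta>0$ and $\alpha\neq\beta$); for $d\geq 3$, some $a_j$ with $j\geq 3$ is then nonzero and $\alpha a_j\in(-1/2,1/2)\setminus\{0\}$ is a nonzero non-integer, so Lemma \ref{lem:fractional} again produces $k$ with $\dist(c_j+k\alpha a_j,\Z)\geq 1/4$. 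For $d=2$ one has $L(e_1)=\pm e_1$, so $\gamma=0$ reduces directly to $\alpha=\beta$, excluded.

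The main obstacle is the second case: the restriction of $L$ to $H$ can be an arbitrary orthogonal transformation of a $(d-1)$-dimensional space, preventing a reduction to signed permutations of the basis; the argument must simultaneously exploit the integer $e_2$- and $e_j$-structure of $\Gamma_\beta$ and the $\beta$-shift between its layers in $e_1$. The scalar $\gamma=\alpha a_1-\sigma\beta$ captures precisely the post-isometry mismatch between the two lattices' diagonal generators, and the degenerate case $\gamma=0$ is exactly where $\alpha\neq\beta$ enters a second time, freeing up an orthogonal direction in $H$.
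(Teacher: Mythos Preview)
Your proof is correct, and it takes a somewhat different route from the paper's. Both arguments hinge on Lemma~\ref{lem:point14} and Lemma~\ref{lem:fractional}, but the case structure differs. The paper applies Lemma~\ref{lem:point14} repeatedly, once for each of $e_1,e_3,\dots,e_d$, to force the linear part of the isometry to act as a \emph{signed permutation} on those basis vectors; only then does it split on whether $e_1$ is sent to $\pm e_1$ or to some $\pm e_i$ with $i\geq 3$, and finally reduces to a two-dimensional computation. You, by contrast, invoke Lemma~\ref{lem:point14} only once to reduce to $L(H)=H$, keep $L|_H$ an arbitrary orthogonal transformation, and then push the whole remaining analysis through a single arithmetic progression $Q_k=\psi^{-1}(k(e_2+\alpha e_1))$. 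Your approach avoids the intermediate reduction to signed permutations at the price of a slightly more delicate bookkeeping (the scalar $\gamma=\alpha a_1-\sigma\beta$ and the $\gamma=0$ degeneration); the paper's approach front-loads the structural reduction so that the endgame is essentially two-dimensional.

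Two small cosmetic points. First, the displayed equality for $\dist(Q_k,\Gamma_\beta)^2$ is literally the squared distance to the layer $x_2=\sigma k+n_0$, not to all of $\Gamma_\beta$; since every other layer lies at $e_2$-distance at least $3/4$, this makes no difference for the lower bound $1/4$ you need, but strictly speaking it is an inequality (or the distance to that specific layer). Second, in the $\gamma=0$ subcase your claim $|a_1|=\beta/\alpha<1$ implicitly uses that $|a_1|\leq 1$ (as $L(e_1)$ is a unit vector) to first deduce $\beta\leq\alpha$, and then $\alpha\neq\beta$ to make it strict; it is worth making that one-line deduction explicit.
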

\begin{proof}
Without loss of generality we assume that $\alpha\neq 0$. It is enough to show that for every Euclidean isometry $\phi$, there exists a point $x\in \Gamma_\alpha$ such that $\dist(x,\phi(\Gamma_\beta))\geq \frac 14$.

Let $\phi'$ be the orthogonal transformation induced by $\phi$ on vectors. Let $v_i=\phi'(e_i)$ for $i=1,3,4,\ldots, d$. Then each $v_i$ is a unit vector and $\phi(\Gamma_\beta)$ contains the set $\{\phi(0)+n\cdot v_i\mid n\in \mathbb{Z}\}$. Therefore, $v_i$ must be parallel to some vector of $e_1,e_3,e_4,\ldots,e_d$ otherwise we can use Lemma \ref{lem:point14}. Note that every one of these $d-1$ vectors (up to sign) must be present in $v_1,v_3,v_4,\ldots,v_d$. 

Also, $v_2:=\phi'(e_2)=\pm e_2$ as this image must be orthogonal to all $v_1,v_3,v_4,\ldots,v_d$.

If $\beta=0$, then we can apply the same lemma to vector $\phi'(e_2)$ and it cannot be in $e_1,e_3,e_4,\ldots,e_d$ as it must be orthogonal to all $v_i$, $i=1,3,4,\ldots, d$. Then Lemma \ref{lem:point14} implies that the Euclidean bottleneck distance between $\Gamma_\alpha$ and $\Gamma_\beta$ is at least $\frac 14$.

If $\beta\neq 0$ then we consider two cases. If $v_1=\pm e_i$ for $i\geq 3$, then the $i$th coordinates of points in $\phi(\Gamma_\beta)$ 
form the set $\{x\pm\beta n+m\mid m,n\in \mathbb Z\}$ 
where $x$ is the $i$th coordinate of $\phi(0)$. By Lemma \ref{lem:fractional}, this set contains a number with fractional part in $[\frac14,\frac 34]$ and the lemma follows. 

In the second case  $v_1=\pm e_1$. Recall that $v_2=\phi'(e_2)=\pm e_2$. In this situation, we can assume that $d=2$ as ignoring the last $d-2$ coordinates does not increase the distance. We can also assume that $v_1=e_1$ and $v_2=e_2$ as three other situations can be treated with similar arguments, possibly changing signs of some vectors.

For every $n\in \mathbb Z$ we consider the following function $f(n)$. We take the point $\phi(n(e_2+\beta e_1))=\phi(0) +n e_2+n \beta e_1\in \phi(\Gamma_\beta)$ 
and project it on the closest line $\{(0,m)+te_1\mid t\in\R\},m\in \mathbb Z$. Then we find the distance from the projected point to $\Gamma_\alpha$ and denote it as $f(n)$. It is enough to find $n$ such that $f(n)\geq \frac 14$.

Note, that there could not be two closest lines that we consider as if there are two such lines, then the second coordinates of all points in $\phi(\Gamma_\beta)$ is in $\frac12+\mathbb Z$ and $d(\phi(\Gamma_\beta),\Gamma_\alpha)\geq \frac12$.

The value $f(n)$ is the distance from $f(0)+(\beta-\alpha)n$ to $0$ in $\R/\Z$ because when we increase $n$ by $1$, the points in $\Gamma_\alpha$ shift by $\alpha$ horizontally, and the points in $\phi(\Gamma_\beta)$ shift by $\beta$ horizontally but still are given by a shifted integer lattice as long as $n$ is fixed.

Now the existence of such $n$ follows from Lemma \ref{lem:fractional}.
\end{proof}

\begin{proof}[Proof of Theorem \ref{theo:uncountable_family}.]
If $\kappa=1$, it is enough to take $S=\frac 14$, $I=[0,\frac 12]$ and the family of lattices $\{\Gamma_\alpha\}_{\alpha\in I}$ for appropriate $r$ and $R$. For an arbitrary $\kappa>0$, we take the family formed by appropriately scaled lattices.
\end{proof}

\section{Coarse geometry}\label{sec:coarse}

\subsection{Nonembeddability conditions}

Let $\{X_k\}_{k\in\N}$ be a sequence of metric spaces and let $X$ be another metric space. We say that a family of maps $\{i_k\colon X_k\to X\}_{k\in\N}$ is a {\em coarse embedding} if there exist two maps $\rho_-,\rho_+\colon\R_{\geq 0}\to\R_{\geq 0}$ such that $i_k$ is a $(\rho_-,\rho_+)$-coarse embedding for every $k\in\N$.
We say that $X$ {\em contains a coarse disjoint union of $\{X_k\}_{k\in\N}$} if there exists a coarse embedding $\{i_k\colon X_k\to X\}_{k\in\N}$  
such that
$$\dist(i_n(X_n),i_m(X_m))\xrightarrow{m+n\to\infty}\infty.$$

\begin{theorem}[\cite{Laf}]\label{theo:Laf}
There exists a sequence $\{X_k\}_{k\in\N}$ of finite metric spaces such that, if a metric space $X$ contains a coarse disjoint union of $\{X_k\}_{k\in\N}$, then $X$ cannot be coarsely embedded into any uniformly convex Banach space. 
\end{theorem}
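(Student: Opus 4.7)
The plan is to invoke Lafforgue's construction of \emph{super-expanders}, i.e.\ graph sequences satisfying a nonlinear spectral gap with respect to every uniformly convex Banach space. First I would take $\{X_k\}_{k\in\N}$ to be a sequence of finite connected graphs (with their graph metrics) of uniformly bounded degree $d$, with $|X_k|\to\infty$, satisfying the following uniform Poincar\'e inequality: for every uniformly convex Banach space $B$ there exist $p_B\geq 1$ and $C_B>0$ such that
\[
\frac{1}{|X_k|^2}\sum_{x,y\in X_k}\|f(x)-f(y)\|_B^{p_B}\leq\frac{C_B}{|E(X_k)|}\sum_{\{x,y\}\in E(X_k)}\|f(x)-f(y)\|_B^{p_B}
\]
for every $k$ and every map $f\colon X_k\to B$. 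The existence of such a sequence is the heart of the matter and by far the main obstacle; it is obtained by Lafforgue from strong Banach property (T) for lattices in higher rank $p$-adic groups such as $SL_3(\Q_p)$, applied to their finite quotients.

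Next, assume for contradiction that $X$ contains a coarse disjoint union of $\{X_k\}$ via maps $i_k\colon X_k\to X$ with common control functions $\rho_-,\rho_+$, and that $j\colon X\to B$ is a coarse embedding into a uniformly convex Banach space $B$ with control functions $\sigma_-,\sigma_+$. Then the compositions $f_k:=j\circ i_k\colon X_k\to B$ form a family of coarse embeddings with uniform control functions $\tau_-:=\sigma_-\circ\rho_-$ and $\tau_+:=\sigma_+\circ\rho_+$.

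I would then apply the Poincar\'e inequality to $f_k$. On the right-hand side, each edge of $X_k$ has graph distance $1$, so $\|f_k(x)-f_k(y)\|_B\leq\tau_+(1)$ across every edge, bounding the right-hand side by $C_B\,\tau_+(1)^{p_B}$ uniformly in $k$. On the left-hand side, the bounded-degree assumption gives $|B(x,R)|\leq d^{R+1}$ in the graph metric of $X_k$, so for any fixed $R$ the fraction of pairs $(x,y)\in X_k\times X_k$ at graph distance at most $R$ is $O(d^{R+1}/|X_k|)\to 0$ as $k\to\infty$. For the remaining pairs $\|f_k(x)-f_k(y)\|_B\geq\tau_-(R)$, hence the left-hand side is at least $(1-o_k(1))\,\tau_-(R)^{p_B}$.

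Finally, since $\tau_-(R)\to\infty$ as $R\to\infty$, I can fix $R$ with $\tau_-(R)^{p_B}>2C_B\,\tau_+(1)^{p_B}$ and then choose $k$ large enough that the left-hand side strictly exceeds the right-hand side, which is a contradiction. Hence no such $j$ exists. The second, third and fourth paragraphs are a routine adaptation of the Gromov--Higson argument for non-embeddability of ordinary expanders into Hilbert space; the genuinely difficult step is the construction of the super-expanders in the first paragraph.
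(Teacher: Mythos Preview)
The paper does not prove this theorem at all: it is simply quoted with the citation \cite{Laf} and used as a black box in the proof of Theorem~\ref{theo:no_coarse_embedding}. There is nothing to compare your argument against within the paper itself.

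That said, your sketch is a correct outline of how the result is actually obtained. The existence of the super-expander sequence is indeed Lafforgue's contribution (via strong Banach property~(T) for, e.g., cocompact lattices in $SL_3(\Q_p)$), and your derivation of the contradiction from the uniform Poincar\'e inequality is the standard one. Two minor remarks: first, your argument never uses the separation condition $\dist(i_n(X_n),i_m(X_m))\to\infty$ in the definition of ``contains a coarse disjoint union''---you only use that the $i_k$ are coarse embeddings with common control functions, which is strictly weaker but already enough; second, to pass from $\sigma_\pm$ and $\rho_\pm$ to $\tau_\pm=\sigma_\pm\circ\rho_\pm$ you implicitly use that the control functions are monotone, which is part of the definition in this paper but worth stating if you write this up elsewhere.
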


To prove Theorem \ref{theo:no_coarse_embedding}, we intend to show that $\PPS_r(\R^d,\kappa)$ and $\PPS^R(\R^d,\kappa)$ contain a coarse disjoint union of any sequence of finite metric spaces. First, we describe a way to embed every finite metric space $X$ into a simpler object in a ``controlled'' manner.

For every $n\in\N\setminus\{0\}$ and $m>0$, endow the space $[0,m]^n$ with the supremum metric $d_m^n$: for every $(x_i)_i,(y_i)_i\in [0,m]^n$, 
\begin{equation}\label{eq:d_m^n}d_m^n((x_i)_i,(y_i)_i)=\max_{i=1,\dots,n}\lvert x_i-y_i\rvert.\end{equation}
For $t>0$ and $n,m\in\N\setminus\{0\}$, we write $C_{t,m}^n$ for the space $([0,tm]\cap t\N)^n$, which is a metric subspace of $([0,tm]^n,d_{tm}^n)$. Thus, $C_{t,m}^n$ can be thought as an $n$-dimensional grid with $m+1$ points along each of $n$ directions with distance $t$ between points in each of $n$ directions equipped with $\ell^\infty$-distance.

\begin{lemma}\label{lemma:emb_from_X_to_cube}
Fix a parameter $t>0$. There exist two maps $\rho_-,\rho_+\colon\R_{\geq 0}\to\R_{\geq 0}$ such that, for every finite metric space $X$, there exist $m,n\in\N$ and a $(\rho_-,\rho_+)$-coarse embedding $i_X\colon X\to C_{t,m}^n$.
\end{lemma}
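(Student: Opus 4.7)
The plan is to use a classical Fréchet-type embedding into $\ell^\infty$ followed by a coordinate-wise rounding to the grid $t\N$. The key observation is that the distortion introduced by these two steps depends only on the parameter $t$, not on the finite metric space $X$, which is exactly what the statement requires.

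First I would enumerate $X=\{x_1,\ldots,x_n\}$ with $n=|X|$ and consider the Fréchet-Kuratowski map $f\colon X\to [0,\diam(X)]^n$ defined by $f(x)=(d(x,x_1),\ldots,d(x,x_n))$. By the triangle inequality each coordinate is $1$-Lipschitz, so $d_\infty(f(x),f(y))\leq d(x,y)$. For the reverse inequality, evaluating the $i$-th coordinate with $x_i=x$ yields $|f(x)_i-f(y)_i|=|0-d(y,x)|=d(x,y)$, so $d_\infty(f(x),f(y))=d(x,y)$. Thus $f$ is an isometric embedding into $([0,\diam(X)]^n,d_{\diam(X)}^n)$.

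Next I would discretize by rounding each coordinate to the nearest multiple of $t$. Explicitly, let $r\colon [0,\infty)\to t\N$ send $a$ to $t\lfloor a/t+1/2\rfloor$, so $|r(a)-a|\leq t/2$, and set $m=\lceil\diam(X)/t\rceil$ and $i_X(x)=(r(f(x)_i))_{i=1}^{n}\in C_{t,m}^n$ (possibly increasing $m$ by one to be safe with the rounding). Applying $r$ coordinate-wise changes each entry by at most $t/2$, so
$$|d_\infty(i_X(x),i_X(y))-d_\infty(f(x),f(y))|\leq t.$$
Combining this with the isometric property of $f$ gives
$$d(x,y)-t\leq d_\infty(i_X(x),i_X(y))\leq d(x,y)+t.$$

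Finally I would package the control functions by setting $\rho_-(s)=\max\{s-t,0\}$ and $\rho_+(s)=s+t$. Both are monotone, $\rho_-(s)\to\infty$ as $s\to\infty$, and they manifestly do not depend on the choice of finite metric space $X$. The bounds displayed above show that $i_X$ is a $(\rho_-,\rho_+)$-coarse embedding, completing the argument. There is no real obstacle here: the only thing to be careful about is ensuring $m$ is large enough that the image lands inside $C_{t,m}^n$ after rounding (so choosing $m=\lceil\diam(X)/t\rceil+1$ suffices), and verifying that $\rho_-,\rho_+$ are universal constants in $t$ alone, which they are by construction.
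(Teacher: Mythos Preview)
Your proof is correct and follows essentially the same approach as the paper: a Kuratowski/Fr\'echet isometric embedding into an $\ell^\infty$-cube followed by coordinate-wise discretization, with control functions depending only on $t$. The only cosmetic difference is that the paper first floors to integers and then dilates by $t$ (yielding control functions roughly $s\mapsto t(s\pm 1)$), whereas you round directly to the grid $t\N$ (yielding $s\mapsto s\pm t$); both variants work.
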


\begin{proof}
The map $i_X$ is obtained by composing different coarse embeddings whose control functions depend only on the parameter $t$. More precisely, define $n=\lvert X\rvert$ and $m=\lfloor 2\diam X\rfloor$, then we have
$$
\xymatrix{
X\ar[rr]^(.35){i_{K}} \ar@/_2.0pc/[rrrrrr]^{i_{X}}  && [0,2\diam X]^n\ar[rr]^(.6){{\lfloor\cdot\rfloor}^n} && C_{1,m}^n\ar[rr]^{dil(t)} && C_{t,m}^n,
}$$
where the three maps are defined as follows.
\begin{compactenum}[$\bullet$]
\item $i_K$ is the {\em Kuratowski embedding} (see for example \cite{WeiYamZav} where the map was used in a similar context) which is an isometric embedding associating to every point $x\in X$, the value $i_K(x)=(\diam X+d_X(x,z)-d_X(x_0,z))_{z\in X}$ for a fixed $x_0\in X$.
\item $\lfloor\cdot\rfloor^n$ associates to every $(x_i)_{i=1}^n$ the $n$-tuple obtained by applying the floor map to each component separately: $\lfloor\cdot\rfloor^n((x_i)_i)=(\lfloor x_i\rfloor)_i$. It is easy to see that this map is a coarse embedding whose control functions are independent of the parameters. 
\item The map $dil(t)$ associates to each $(x_i)_i\in C_{1,m}^n$ the point $(tx_i)_i\in C_{t,m}^n$. The distance between two images is $t$-times the distance in the original space, and so it is a coarse embedding whose control functions depend only on $t$.
\end{compactenum}
Then $i_X$ is a coarse embedding whose control functions depend only on $t$.
\end{proof}

In the next step in the strategy, we intend to ``slice'' 
$C_{t,m}^n$ along coordinate directions in a uniform way and encode their geometry in appropriate finite subsets of the real line. To do it, we implement a method that has been previously successfully applied in \cite{WeiYamZav} and \cite{Za_GH}. The following lemma is a crucial step in our construction.

\begin{lemma}\label{lemma:WeiYamZav}
Let $m,n\in\mathbb N\setminus\{0\}$. For every parameter $t>0$, there exists a positive number $M=M(t,m,n)$ and an isometric embedding $j_{t,m}^n\colon C_{t,m}^n\to [[0,M]]^{=n}$,  where the target space  (i.e., the family of $n$-point subsets of the interval $[0,M]$) is endowed with the bottleneck distance.
\end{lemma}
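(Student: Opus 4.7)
The plan is to embed the grid $C_{t,m}^n$ into the real line by placing each coordinate in its own well-separated ``slot''. Concretely, I would fix a separation constant $K>3tm$ (any value above $3tm$ works; $K=4tm$ is convenient), put $A_i:=(i-1)K$ for $i=1,\dots,n$, set $M:=(n-1)K+tm$, and define
$$j_{t,m}^n(x_1,\dots,x_n):=\{A_1+x_1,\,A_2+x_2,\,\dots,\,A_n+x_n\}\subseteq [0,M].$$
The first routine check is that this map lands in $[[0,M]]^{=n}$: since $A_i+x_i\in[A_i,A_i+tm]$ and consecutive such intervals are disjoint (separated by $K-tm>0$), the image consists of $n$ distinct points of $[0,M]$.

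The core of the argument is showing the map is isometric. Given $x=(x_i)_i,y=(y_i)_i\in C_{t,m}^n$, the ``diagonal'' bijection $A_i+x_i\leftrightarrow A_i+y_i$ has bottleneck cost exactly $\max_i|x_i-y_i|=d_{tm}^n(x,y)$, giving the upper bound $d_B(j_{t,m}^n(x),j_{t,m}^n(y))\leq d_{tm}^n(x,y)$. For the matching lower bound I would argue that the diagonal matching is optimal: any other bijection between the two images corresponds to a non-identity permutation $\sigma$ of $\{1,\dots,n\}$ pairing $A_i+x_i$ with $A_{\sigma(i)}+y_{\sigma(i)}$, and choosing $i$ with $\sigma(i)\neq i$, the matched pair satisfies
$$|A_i+x_i-A_{\sigma(i)}-y_{\sigma(i)}|\geq |A_i-A_{\sigma(i)}|-|x_i-y_{\sigma(i)}|\geq K-2tm>tm\geq d_{tm}^n(x,y),$$
using $x_i,y_{\sigma(i)}\in[0,tm]$. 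Thus every non-diagonal matching has bottleneck cost strictly exceeding that of the diagonal one, forcing $d_B(j_{t,m}^n(x),j_{t,m}^n(y))=d_{tm}^n(x,y)$.

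The hard part---really the only delicate point---is calibrating the constant $K$: it must be large enough that the cost of any ``slot swap'' strictly exceeds the maximal possible in-slot displacement $tm$. The inequality $K>3tm$ captures this, and once it is in force, the bottleneck distance is pinned down by the diagonal matching and the embedding is isometric by construction. Note that $M=M(t,m,n)$ depends on all three parameters, as required.
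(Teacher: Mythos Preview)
Your proof is correct and follows essentially the same approach as the paper: both place each coordinate in its own well-separated slot on the real line (the paper uses offsets $3tmi+tm(i-1)$, i.e., slot separation $4tm$, which is precisely your suggested $K=4tm$), and the isometry follows because any non-diagonal matching incurs a cross-slot cost exceeding $tm$. The paper in fact omits the verification entirely, citing \cite[Lemma 5.6]{WeiYamZav}; you have supplied the details.
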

\begin{proof}
Let us recall the construction, referring to \cite[Lemma 5.6]{WeiYamZav} for the details. We warn the reader that in \cite[Lemma 5.6]{WeiYamZav}, the proof was carried out for the Hausdorff metric, but it can be applied verbatim to our setting. We define $M=tmn+2tm(n+1)=tm(3n+2)$, and, for every $(x_i)_i\in C_{t,m}^n$, we define
$$j_{t,m}^n((x_i)_i)=\{3tmi+tm(i-1)+x_i\mid i\in\{1,\dots,n\}\}.$$
\end{proof}
We represent  map $j_{t,m}^n$ in Figure \ref{fig:g_m^n}.

\begin{figure}
	\centering
\begin{tikzpicture}
\draw (0,0) node [below] {$0$} -- (8,0) (10,0)--(15,0) node [below] {$M$};
\draw (0,0.2)--(0,0.6) --(2,0.6) node [pos=0.5,above]{$3tm$} -- (2,0.2);
\draw (3,0.2)--(3,0.6) --(5,0.6) node [pos=0.5,above]{$3tm$} -- (5,0.2);
\draw (6,0.2)--(6,0.6) --(8,0.6) node [pos=0.5,above]{$3tm$} -- (8,0.2);
\draw (10,0.2)--(10,0.6) --(12,0.6) node [pos=0.5,above]{$3tm$} -- (12,0.2);
\draw (13,0.2)--(13,0.6) --(15,0.6) node [pos=0.5,above]{$3tm$} -- (15,0.2);
\draw (2,0.8)--(2,1)--(3,1) node[pos=0.5,above]{$tm$}--(3,0.8);
\draw (5,0.8)--(5,1)--(6,1) node[pos=0.5,above]{$tm$}--(6,0.8);
\draw (12,0.8)--(12,1)--(13,1) node[pos=0.5,above]{$tm$}--(13,0.8);

\draw[dashed] (8,0) -- (10,0);
\draw[red] (2,-0.2) -- (2,-0.4) -- (2.7,-0.4) node [pos=0.5,below]{$x_1$}--(2.7,-0.2);
\fill[red] (2.7,0) circle (2pt) (5.3,0) circle (2pt) (12.8,0) circle (2pt);
\draw[red] (5,-0.2) -- (5,-0.4) -- (5.3,-0.4)node [pos=0.5,below]{$x_2$}--(5.3,-0.2);
\draw[red] (12,-0.2) -- (12,-0.4)-- (12.8,-0.4) node [pos=0.5,below]{$x_n$}--(12.8,-0.2);
\fill[blue] (2.3,0) circle (2pt) (5.8,0) circle (2pt) (12.6,0) circle (2pt);
\draw[blue] (2,0.2) -- (2,0.4) -- (2.3,0.4) node [pos=0.5,above]{$y_1$}--(2.3,0.2);
\draw[blue] (5,0.2) -- (5,0.4) -- (5.8,0.4)node [pos=0.5,above]{$y_2$}--(5.8,0.2);
\draw[blue] (12,0.2) -- (12,0.4)-- (12.6,0.4) node [pos=0.5,above]{$y_n$}--(12.6,0.2);
\end{tikzpicture}
\caption{A representation of the subsets $j_{t,m}^n((x_i)_i)$ in red and $j_{t,m}^n((y_i)_i)$ in blue, for $(x_i)_i,(y_i)_i\in C_{t,m}^n$, as defined in Lemma \ref{lemma:WeiYamZav}. The bottleneck distance between the two subsets is given by the maximum of the values $\lvert x_i-y_i\rvert$, where $i\in\{1,\dots,n\}$.}\label{fig:g_m^n}
\end{figure}
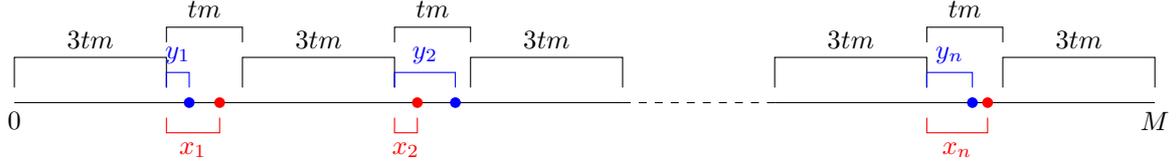

We discuss separately the proofs of Theorem \ref{theo:no_coarse_embedding} for $\PPS_r(\R^d,\kappa)$ and $\PPS^R(\R^d,\kappa)$ in the following two subsequent subsections, \S\ref{sub:theo_no_ce_PPS_r} and \S\ref{sub:theo_no_ce_PPS^R}. In both situations, thanks to Lemma \ref{lemma:emb_from_X_to_cube}, to show that any of the two spaces contains a coarse disjoint union of any family of finite metric spaces, it is enough to prove that it contains a coarse disjoint union of the family $\{C_{t,m}^n\}_{m,n\in\N\setminus\{0\}}$ for an appropriate parameter $t>0$. Clearly, the family is countable, and we want to construct the embeddings by recursion. 

Let us fix a bijection $T\colon(\N\setminus\{0\})^2\to\N\setminus\{0\}$ defined as follows: for every pair $(m,n)\in\N$,
$$T(m,n)=m+\sum_{i=2}^{m+n-1}(i-1)=m+\frac{1}{2}(m+n-2)(m+n-1).$$
In Figure \ref{fig:T} we represent the sequence of 
points $T(m,n)$.
\begin{figure}
	\centering
\begin{tikzpicture}[>=latex]
\draw[help lines] (1,1) grid (5,5);
\draw[thick,->] (1,1)--(2,1);
\draw[thick,->] (2,1)--(1,2);
\draw[thick,->] (1,2)--(3,1);
\draw[thick,->] (3,1)--(2,2);
\draw[thick,->] (2,2)--(1,3);
\draw[thick,->] (1,3)--(4,1);
\draw[thick,->] (4,1)--(3,2);
\draw[thick,->] (3,2)--(2,3);
\draw[thick,->] (2,3)--(1,4);
\draw[thick,->] (1,4)--(5,1);
\draw[thick,->] (5,1)--(4,2);
\draw[thick,->] (4,2)--(3,3);
\draw[thick,->] (3,3)--(2,4);
\draw[thick,->] (2,4)--(1,5);
\end{tikzpicture}
\caption{A representation of the map $T^{-1}\colon\N\setminus\{0\}\to(\N\setminus\{0\})^2$.}\label{fig:T}
\end{figure}
By construction, for every two pairs $(m,n),(m^\prime,n^\prime)\in\N\setminus\{0\}$, 
we have
$m+n\leq m^\prime+n^\prime$ provided that $T(m,n)\leq T(m^\prime,n^\prime)$. For the sake of simplicity, we denote by $\preceq$ the order on $(\N\setminus\{0\})^2$ induced by $T$, i.e., $(m,n)\preceq(m^\prime,n^\prime)$ if $T(m,n)\leq T(m^\prime,n^\prime)$ (and $(m,n)\prec(m^\prime,n^\prime)$ if $T(m,n)<T(m^\prime,n^\prime)$).

\subsection{Theorem \ref{theo:no_coarse_embedding} for $\PPS_r(\R^d,\kappa)$}\label{sub:theo_no_ce_PPS_r}

Let $t=2r$. In this subsection, we want to show that $\PPS_r(\R^d,\kappa)$ contains a coarse disjoint union of the family $\{C_{t,m}^n\}_{n,m}$. As described in the previous subsection, we proceed by recursion to provide a coarse embedding of each element of the family with uniform control functions. Then, we will discuss how to prove that they form a coarse disjoint union. 

We now describe the embedding of $C_{t,m}^n$ depending on two parameters $l$ and $s$ whose role will be discussed later. For a pair of strictly positive real values $l$ and $s$, we define the map $\phi_{t,m}^n\colon C_{t,m}^n\to\PPS_r(\R^d,\kappa)$ by setting
$$\phi_{t,m}^n((x_i)_i)=(P,(s_1e_1,\dots,s_de_d)),$$ 
where $P$ and $s_1,\dots,s_d$ are as follows:
\begin{compactenum}[$\bullet$]
\item $s_1=3m+l+s$, with $s\geq 7M(t,m,n)+3m$, and, for every $i\in\{2,\dots,d\}$, $s_i=i\cdot s_1$, see Lemma \ref{lemma:WeiYamZav} for the definition of $M(t,m,n)$)
\item $P$ is the union of three components: 
\begin{compactenum}[-]
\item $X_{t,m}^n=3m(e_1+\dots+e_d)+Q(l)$, where $Q(l)$ is the densest set of points in a $d$-dimensional parallelepiped of edges $l,2l,\dots,dl$, 
compatibly with the packing radius $r$, 
\item $Y_{t,m}^n$, the union of portions of the $(d-1)$-dimensional lattice $(t\Z)^{d-1}$ covering the facets of the unit cell, i.e., the subsets $\{(x_1,\dots,x_{i-1},0,x_{i+1},\dots,x_d)\mid\forall j\in\{1,\dots,d\}\setminus\{i\},\, 0\leq x_j< s_j\}$ for $i\in\{1,\dots,d\},$
\item and $Z_{t,m}^n((x_i)_i)=(3m+l+3M,3m,\dots,3m)+j_{t,m}^n((x_i)_i)\cdot e_1$, the {\em signature} of the point $(x_i)_i$.
\end{compactenum}
\end{compactenum}

Note that the two sets of points $X_{t,m}^n$ and $Y_{t,m}^n$ are portions of the motif that are common to all the images of the map $\phi_{t,m}^n$. Only $Z_{t,m}^n((x_i)_i)$ retains information about the point $(x_i)_i$.

In order to attain density $\kappa$, the parameters $l$ and $s$ are tightly connected: once one is fixed, the other one has to be chosen accordingly. Note that the points $Y_{t,m}^n$ do not prevent the periodic point set from achieving the desired density since the $d$-dimensional volume of the empty portion of the interior of the unit cell grows faster than the sum of the $(d-1)$-dimensional volumes of its facets. The unit cell of $\phi_{t,m}^n((x_i)_i)$ is represented in Figure \ref{fig:phi_tm_n_x_i}.

\begin{figure}
\centering
\begin{tikzpicture}[>=latex]
\tikzset{decorate sep/.style 2 args={decorate,decoration={shape backgrounds,shape=circle,shape size=#1,shape sep=#2}}}

\fill [white!90!black] (0,0) -- (8.5,0) -- (8.5,0.5) -- (0,0.5) -- (0,0);
\fill [white!90!black] (0,0.5) -- (0.5,0.5) -- (0.5,7.5) -- (0,7.5) -- (0,0.5);
\fill [white!90!black] (0.5,7.5) -- (0.5,7) -- (8.5,7) -- (8.5,7.5) -- (0.5,7.5);
\fill [white!90!black] (8,0.5) -- (8.5,0.5) -- (8.5,7) -- (8,7) -- (8,0.5);

\fill [white!60!red] (5,0.5) -- (5,0.65) -- (6.5,0.65) -- (6.5,0.5) -- (5,0.5);
\fill [white!60!red] (0.5,0.5) -- (4,0.5) -- (4,4) -- (0.5,4) -- (0.5,0.5);
\draw[-] (0,0) -- (8.5,0);
\draw[-] (0,0) -- (0,7.5);
\draw (0,7.7) -- (0,7.9) -- (8.5,7.9) node [pos=0.5,above]{$s_1$} -- (8.5,7.7);
\draw [dashed] (8.5,0) -- (8.5,7.5) -- (0,7.5);
\draw (-0.5,0) -- (-1.1,0) -- (-1.1,7.5) node [pos=0.5,left]{$s_2=2(3m+l+s)$} -- (-0.5,7.5);
\draw (-0.2,7) -- (-0.4,7) -- (-0.4,7.5) node [pos=0.5,left]{$3m$} -- (-0.2,7.5);
\draw (-0.2,0) -- (-0.4,0) -- (-0.4,0.5) node [pos=0.5,left]{$3m$} -- (-0.2,0.5);
\draw (4,-0.2) -- (4,-0.4) -- (5,-0.4) node [pos=0.5,below]{$3M$} -- (5,-0.2);
\draw (5,-0.5) -- (5,-0.7) -- (6.5,-0.7) node [pos=0.5,above]{$M$} -- (6.5,-0.5);
\draw (6.5,-0.2) -- (6.5,-0.4) -- (7.5,-0.4) node [pos=0.5,below]{$3M$} -- (7.5,-0.2);
\draw (8,-0.2) -- (8,-0.4) -- (8.5,-0.4) node [pos=0.5,below]{$3m$} -- (8.5,-0.2);
\draw (4,-0.8) -- (4,-1) -- (8.5,-1) node [pos=0.5,below]{$s$} -- (8.5,-0.8);
\draw (5.75,0.65) node[above]{$j_{t,m}^n((x_i)_i)$};
\draw [dashed] (6.5,0) -- (6.5,0.5);
\draw [dashed] (0,4) -- (8.5,4);
\draw [dashed] (5,0) -- (5,0.5);
\draw [dashed] (4,0)--(4,7.5) (0,0.5)--(8.5,0.5) (0,7)--(8.5,7) (0.5,0)--(0.5,7.5) (8,0)--(8,7.5);
\draw (2.25,2.25) node  {$Q(l)$};
\draw (-0.5,0.5) -- (-0.7,0.5) -- (-0.7,4) node [pos=0.5,right]{$2l$} -- (-0.5,4);
\draw (0,-0.2) -- (0,-0.4) -- (0.5,-0.4) node [pos=0.5,below]{$3m$} -- (0.5,-0.2);
\draw (0.5,-0.5) -- (0.5,-0.7) -- (4,-0.7) node [pos=0.5,above]{$l$} -- (4,-0.5);
\draw[decorate sep={1mm}{4mm},fill,white!60!red] (0,0) -- (8.5,0) (0,0)--(0,7.5);
\end{tikzpicture}
\caption{A (not to scale) representation of the unit cell of $\phi_{t,m}^n((x_i)_i)\in\PPS_r(\R^2,\kappa)$. The motif consists of the red dots on the boundary of the unit cell, and of points contained in the two other red areas: the rectangle  $Q(l)$, and the signature points $j_{t,m}^n((x_i)_i)$. The gray area is empty, and the same holds for a rectangle at the top right (i.e., $[3m+l,3m+l+s]\times[3m+2l,s_2]$). The latter provides a lower bound $s/2$ to the covering radius of the induced periodic point set. Similarly, $(3m+l+s+2r\sqrt{d-1})/2$ is an upper bound (the last addendum is half of the diagonal of a $(d-1)$-dimensional cube with edge length equal to $2r$). These bounds are formalized by Lemma \ref{lemma:c_bounds_for_phi}. }\label{fig:phi_tm_n_x_i}
\end{figure}

Let us provide some immediate bounds on the covering radius of periodic point sets $\phi_{t,m}^n((x_i)_i)$.

\begin{lemma}\label{lemma:c_bounds_for_phi}
In the previous notation, for every $(x_i)_i\in C_{t,m}^n,$
$$\frac{s}{2}\leq c(\phi_{t,m}^n((x_i)_i))\leq \frac{s_1+2r\sqrt{d-1}}{2}.$$
\end{lemma}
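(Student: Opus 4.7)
The plan is to bound the covering radius separately from below and from above, exhibiting in each case an explicit witness point and using the description of the three components $X_{t,m}^n$, $Y_{t,m}^n$, $Z_{t,m}^n((x_i)_i)$ of the motif together with the periodicity of the underlying lattice.

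For the lower bound $c(\phi_{t,m}^n((x_i)_i)) \geq s/2$, I would pick a witness inside the empty region highlighted in Figure~\ref{fig:phi_tm_n_x_i}, for example
\[
y = \bigl(3m + l + s/2,\; 3m + s/2,\; \ldots,\; 3m + s/2\bigr).
\]
I would then verify $\dist(y, P) \geq s/2$ by checking the three types of motif points and their periodic translates separately. The points of $X_{t,m}^n = 3m(e_1+\cdots+e_d)+Q(l)$ lie in the parallelepiped $[3m,3m+l]\times[3m,3m+2l]\times\cdots$, so their $e_1$-coordinate differs from $y_1$ by at least $s/2$. The signature points $Z_{t,m}^n((x_i)_i)$ lie on the line $x_2=\cdots=x_d=3m$, so the $e_2$-component of $y-z$ has modulus $s/2$. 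For $Y_{t,m}^n$, each facet-component gives distance at least $\min(y_i,\,s_i-y_i)$; using $s_1=3m+l+s$ and $s_i=is_1$ one checks that each such value is at least $s/2$ (one uses $s\geq 7M+3m$ here). Including the periodic images of each facet $\{x_i=0\}$ as $\{x_i=s_i\}$ covers all cases.

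For the upper bound $c(\phi_{t,m}^n((x_i)_i)) \leq (s_1+2r\sqrt{d-1})/2$, I would use only the $Y$-component. By periodicity it suffices to take $y=(y_1,\ldots,y_d)$ inside the half-open unit cell, and by symmetry one may assume $y_1 \leq s_1/2$. On the facet $\{x_1 = 0\}$ the set $Y_{t,m}^n$ contains the $(d-1)$-dimensional lattice $(2r\Z)^{d-1}$, so there exist integers $k_2,\ldots,k_d$ with $|y_j - 2rk_j| \leq r$ and $(0,2rk_2,\ldots,2rk_d)$ still lying on the facet. The Euclidean distance from $y$ to this lattice point is at most
\[
\sqrt{(s_1/2)^2 + (d-1)r^2} \leq \frac{s_1}{2} + r\sqrt{d-1} = \frac{s_1+2r\sqrt{d-1}}{2},
\]
using $\sqrt{a^2+b^2}\leq a+b$ for $a,b\geq 0$.

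The only subtlety I anticipate is bookkeeping for the lower bound: one must check that the chosen witness really sits inside the empty region $[3m+l,3m+l+s]\times[3m+2l,s_2]\times\cdots$ (which requires $s/2 \geq 2l$, guaranteed by the standing assumption $s\geq 7M+3m$ once $l$ is small with respect to $s$, as in the construction), and that the periodicity does not drag some $X$- or $Z$-point closer than expected. Once the witness is fixed and these checks are performed component by component, both estimates follow at once.
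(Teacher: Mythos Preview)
Your proof is correct and follows essentially the same approach as the paper's: a witness point in the empty region for the lower bound (the paper simply takes the center of the empty parallelepiped $[3m+l,s_1]\times[3m+2l,s_2]\times\cdots\times[3m+dl,s_d]$, whose shortest side is $s$, whereas you pick slightly different coordinates and check each motif component explicitly), and projection to the nearest $x_1$-facet followed by the closest $(t\Z)^{d-1}$-point of $Y_{t,m}^n$ for the upper bound. Your closing worry about needing $s/2\geq 2l$ is unnecessary, since you verify the distances to $X_{t,m}^n$, $Y_{t,m}^n$, $Z_{t,m}^n$ directly rather than relying on your witness lying inside that particular parallelepiped.
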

\begin{proof}
Since the parallelepiped 
$$[3m+l,3m+l+s]\times[3m+2l,s_2]\times\cdots\times[3m+dl,s_d]$$ 
(the top right rectangle in Figure \ref{fig:phi_tm_n_x_i}) in the unit cell is empty, its center is at least $s/2$-far apart from any point of the periodic point set.

As for the upper bound, every point in $\R^d$ is at distance at most $s_1/2$ from the boundary of a copy of the unit cell. Then, a point in the corresponding copy of $Y_{t,m}^n$ can be found at distance at most the diameter of a $(d-1)$-dimensional cube of edge length $2r$, which is $2r\sqrt{d-1}$. 
\end{proof}

\begin{lemma}\label{lemma:phi_tm_n_ce}
In the previous notation, for sufficiently large and appropriate $l$ and $s$, $\phi_{t,m}^n$ is a coarse embedding whose control functions do not depend on $m$ and $n$. More precisely:
\begin{compactenum}[(a)]
\item if $\PPS_r(\R^d,\kappa)$ is endowed with $d_B$, then $\phi_{t,m}^n$ is an isometric embedding;
\item if $\PPS_r(\R^d,\kappa)$ is endowed with $\EB$, then $\phi_{t,m}^n$ is a $(\rho_-,\rho_+)$-coarse embedding, where $\rho_-\colon x\mapsto x/2$ and $\rho_+=id$.
\end{compactenum}
\end{lemma}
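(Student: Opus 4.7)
The plan is to establish the upper and lower bounds on the target distance separately. The upper bound is uniform across (a) and (b): I will exhibit the natural bijection $f\colon\phi_{t,m}^n((x_i)_i)\to\phi_{t,m}^n((y_i)_i)$ acting as the identity on every lattice translate of the common components $X_{t,m}^n\cup Y_{t,m}^n$, and on the signature of each unit cell realizing the optimal matching $Z_{t,m}^n((x_i)_i)\to Z_{t,m}^n((y_i)_i)$ provided by Lemma \ref{lemma:WeiYamZav}. Since every displacement under $f$ is either zero (common part) or parallel to $e_1$ of magnitude at most $d_{tm}^n((x_i)_i,(y_i)_i)$ (signature part), this immediately gives $d_B(\phi_{t,m}^n((x_i)_i),\phi_{t,m}^n((y_i)_i))\leq d_{tm}^n((x_i)_i,(y_i)_i)$; choosing $\psi=\mathrm{id}$ in the definition of $\EB$ yields the analogous upper bound, giving $\rho_+=\mathrm{id}$ in (b).

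For the lower bound in (a), the strategy is geometric isolation of the signature region inside each unit cell. With $l$ and $s$ chosen sufficiently large relative to the fixed parameter $t$, the separation of each signature point from every non-signature point of its own and every neighbouring cell will strictly exceed the maximal possible value $tm$ of $d_{tm}^n$. Consequently, any bijection $g$ with $\sup_x\|x-g(x)\|<d_{tm}^n((x_i)_i,(y_i)_i)$ must send the signature of each cell onto the corresponding signature of the same cell. Restricted to one cell, $g$ becomes a bijection $Z_{t,m}^n((x_i)_i)\to Z_{t,m}^n((y_i)_i)$, which by Lemma \ref{lemma:WeiYamZav} must displace at least one point by at least $d_{tm}^n((x_i)_i,(y_i)_i)$, a contradiction. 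Hence the inequality in (a) is an equality.

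For the lower bound in (b), I need to show that for every isometry $\psi\in\Iso(\R^d)$ one has $d_B(\phi_{t,m}^n((x_i)_i),\psi(\phi_{t,m}^n((y_i)_i)))\geq d_{tm}^n((x_i)_i,(y_i)_i)/2$. The first step is a rigidity statement: any $\psi$ giving a finite $d_B$ must, modulo a translation by an element of $\Lambda$, be a pure translation $\tau_v$. This relies on the asymmetry of the unit cell (pairwise distinct edge lengths $s_1<s_2<\cdots<s_d$) and of $Q(l)$ (distinct edges $l,2l,\ldots,dl$): any nontrivial orthogonal component of $\psi$ would mismatch these asymmetric features in far-away cells, forcing $d_B=\infty$. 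For $\psi=\tau_v$, the cell-isolation argument of (a) applies again, and the resulting bijection produces displacement at least $\max(\|v\|,\max_i\|v-(x_i-y_i)e_1\|)$; balancing these quantities over $v$ (optimally $v$ is collinear with $e_1$) gives the lower bound $d_{tm}^n((x_i)_i,(y_i)_i)/2$, via the elementary inequalities $|u|+|u-c|\geq|c|$ applied pairwise to the three displacement terms. The main obstacle is the rigidity step, since it requires a careful analysis of how the asymmetric components prevent any nontrivial orthogonal map from preserving bounded matching uniformly across distant unit cells.
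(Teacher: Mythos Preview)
Your arguments for the upper bounds and for item (a) are essentially the paper's approach, and the trade-off analysis at the very end of (b) is also correct once rigidity is established. The genuine gap is in the rigidity step for (b).

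You claim that any isometry $\psi$ with $d_B(\phi_{t,m}^n((x_i)_i),\psi(\phi_{t,m}^n((y_i)_i)))<\infty$ must, modulo $\Lambda$, be a pure translation, because a nontrivial orthogonal component ``would mismatch these asymmetric features in far-away cells, forcing $d_B=\infty$''. This is false: both periodic point sets have density $\kappa$, and isometries preserve density, so by Theorem \ref{theo:connected_component_density} we have $d_B(\phi_{t,m}^n((x_i)_i),\psi(\phi_{t,m}^n((y_i)_i)))<\infty$ for \emph{every} $\psi\in\Iso(\R^d)$. Your rigidity statement is therefore vacuous as written, and the ``far-away cells'' heuristic cannot be salvaged into a proof that $d_B=\infty$: a rotated copy of a periodic point set is again periodic of the same density and always admits a bounded matching.

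What the paper does instead is to first use the already-established upper bound to get $\EB(\phi_{t,m}^n((x_i)_i),\phi_{t,m}^n((y_i)_i))\leq\diam C_{t,m}^n$, and then analyse only a \emph{minimizing} isometry $\Psi$ (which exists by Proposition \ref{PropDebMIN}). Under this a priori bound, one argues locally: the facet-layer $Y_{t,m}^n$ is separated from the rest of the motif by a margin exceeding the diameter, which forces the linear part of $\Psi$ to permute the coordinate directions (sending each $e_i$ to some $\pm e_j$); then the asymmetric placement and shape of the dense block $X_{t,m}^n$ near a single corner rules out all sign changes and nontrivial permutations, leaving $\Psi$ a translation. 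You need to replace ``finite $d_B$'' by ``$d_B$ at most the diameter'' and carry out this two-step local analysis rather than appealing to behaviour at infinity.
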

\begin{proof}
The map $\phi_{t,m}^n$ is well-defined (i.e., the periodic point set has packing radius at least $r$) thanks to the construction of $j_{t,m}^n$ and the constraints on $Q(l)$ and $Y_{t,m}^n$. 

Item (a) easily follows from Lemma \ref{lemma:WeiYamZav}. In fact, we already observed that the subsets $X_{t,m}^n$ and $Y_{t,m}^n$ are common to the images of any pair of points in $C_{t,m}^n$, and therefore they can be matched. Two different signatures can be matched as described in Lemma \ref{lemma:WeiYamZav}.

Let us now discuss item (b). The fact that $\rho_+=id$ is a control function is easy to check. In fact, $\EB$ is always bounded from above by $d_B$ and item (a) implies the claim. As for $\rho_-$, let us divide the proof into steps. 

Let us take two points $(x_i)_i,(y_i)_i\in C_{t,m}^n$. Clearly, $d_{EH}(\phi_{t,m}^n((x_i)_i),\phi_{t,m}^n((y_i)_i))\leq m=\diam(C_{t,m}^n)$.  Let us fix an isometry $\Psi$ of $\R^d$ realizing the Euclidean bottleneck distance.

The points of the copies of $Y_{t,m}^n$ are at least $3m$ far apart from any other point of the periodic point sets (the gray frame in Figure \ref{fig:phi_tm_n_x_i} surround those points).  Furthermore, the lengths of the edges of the unit cell are very different for each dimension. Therefore, there cannot be any rotation involved in $\Psi$, and the linear part of $\Psi$ has to send each vector $e_i$ into either $e_i$ or $-e_i$.

Since the densest part of the unit cell, i.e., $X_{t,m}^n$, is placed close to a specific corner of the unit cell, and far from all the others, the linear part of $\Psi$ must be the identity map. Hence, $\Psi$ is a translation.

Note that any translation, even though it may lower the distance between the signatures of the two points ($Z_{t,m}^n((x_i)_i)$ and $Z_{t,m}^n((y_i)_i)$) creates a misalignment between the two copies of $X_{t,m}^n$ and $Y_{t,m}^n$. Hence, the best trade-off is achieved at most at $d_B(\phi_{t,m}^n((x_i)_i),\phi_{t,m}^n((y_i)_i))/2$, and the claim follows thanks to item (a).
\end{proof}

The next step consists in checking whether the coarse embedding defined on each $C_{t,m}^n$ separately shows that $\PPS_r(\R^d,\kappa)$ contains their coarse disjoint union. In order to do it, we need to evaluate the distance between $\phi_{t,m}^n(C_{t,m}^n)$ and $\phi_{t,m^{\prime}}^{n^\prime}(C_{t,m^{\prime}}^{n^\prime})$ for distinct pairs $(m,n)$ and $(m^\prime,n^\prime)$.

\begin{lemma}
\label{lemma:c_implies_bounds_on_dB}
Let $X,Y\in\PPS(\R^d,\kappa)$. Suppose that $c(X)\leq R$ and $c(Y)>S$ for some $R,S\in\R_{\geq 0}$ with $R<S$. Then $d_B(X,Y)\geq\EB(X,Y)\geq S-R$.
\end{lemma}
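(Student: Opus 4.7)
The first inequality $d_B(X,Y)\geq\EB(X,Y)$ is immediate from the definition of the Euclidean bottleneck distance, since $\EB$ is the infimum of $d_B(X,\psi(Y))$ over all $\psi\in\Iso(\R^d)$, and the identity is an isometry.

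For the second inequality, the plan is to exploit the gap between the two covering radii to force every bijection to stretch some point by at least $S-R$. Fix an arbitrary isometry $\psi\in\Iso(\R^d)$; since isometries preserve covering radii, $c(\psi(Y))=c(Y)>S$. By the supremum characterization of the covering radius, there exists $z\in\R^d$ with $\dist(z,\psi(Y))>S$. On the other hand, because $c(X)\leq R$, for this particular $z$ we have $\inf_{x\in X}\lvert\lvert x-z\rvert\rvert\leq R$; since $X$ is discrete and closed, this infimum is attained at some $x_0\in X$ with $\lvert\lvert x_0-z\rvert\rvert\leq R$.

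Now let $f\colon X\to\psi(Y)$ be any bijection (if none exists then $d_B(X,\psi(Y))=\infty$ and there is nothing to prove). Applying the triangle inequality at the points $x_0$, $f(x_0)$, $z$, we obtain
$$\lvert\lvert x_0-f(x_0)\rvert\rvert\geq\lvert\lvert f(x_0)-z\rvert\rvert-\lvert\lvert x_0-z\rvert\rvert\geq\dist(z,\psi(Y))-R>S-R.$$
Taking the supremum over $x\in X$ and then the infimum over bijections $f$ gives $d_B(X,\psi(Y))\geq S-R$, and finally taking the infimum over $\psi\in\Iso(\R^d)$ yields $\EB(X,Y)\geq S-R$.

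There is no real obstacle here; the only subtle point is making sure that $z$ and $x_0$ can actually be chosen as claimed, which follows directly from the definitions of the covering radius (as a supremum that need not be attained, but for which $c(Y)>S$ guarantees a witness $z$) and from the fact that a periodic point set is closed and discrete (so the infimum $\inf_{x\in X}\lvert\lvert x-z\rvert\rvert$ is attained).
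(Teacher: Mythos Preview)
Your proof is correct and follows essentially the same approach as the paper's: pick a point $z$ witnessing the large covering radius of (the isometric image of) $Y$, pick a nearby point $x_0\in X$ using $c(X)\leq R$, and apply the triangle inequality. The only cosmetic difference is that the paper applies the isometry to $X$ rather than to $Y$ and phrases the final step as a ball-containment $B(f(x),S-R)\cap Y\subseteq B(y,S)\cap Y=\emptyset$ instead of a direct triangle inequality, which amounts to the same thing.
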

\begin{proof}
Since $c(Y)>S$, there exists $y\in\R^d$ such that $\overline{B(y,S)}\cap Y=\emptyset$. Let us fix an isometry $f\in\Iso(\R^d)$. It is clear that $c(X)=c(f(X))$. Since $c(X)\leq R$, there exists $x\in X$ such that $\lvert\lvert f(x)-y\rvert\rvert\leq R$. Then $d_B(f(X),Y)\geq S-R$ since $B(f(x),S-R)\cap Y\subseteq B(y,S-R+R)\cap Y=\emptyset$.
\end{proof}

In order to use Lemma \ref{lemma:c_implies_bounds_on_dB}, we impose further constraints on the constants $l$ and $s$ depending on the pair of indices $(m,n)$ that we are considering.  For this reason, let us rename the two constants associated with the map $\phi_{t,m}^n$ by $l(m,n)$ and $s(m,n)$, respectively. 

We define additional inequalities following the order $\preceq$ on $(\N\setminus\{0\})^2$. We require that, for a given $(m,n)\in(\N\setminus\{0\})^2$, 
\begin{equation}\label{eq:constraint_for_dist}\frac{3m+l(m,n)+s(m,n)+2r\sqrt{d-1}}{2}>\sup_{(m^\prime,n^\prime)\prec(m,n)}\Big(\frac{s(m^\prime,n^\prime)}{2}\Big)+2^{T(m,n)}.\end{equation}
By choosing $s(m,n)$ and $l(m,n)$ recursively and large enough we can guarantee the inequality \eqref{eq:constraint_for_dist}.

We have all the ingredients to prove the first part of Theorem \ref{theo:no_coarse_embedding} regarding the packing radius.

\begin{proof}[Proof of Theorem \ref{theo:no_coarse_embedding} for $\PPS_r(\R^d,\kappa)$]
The maps $\{\phi_{t,m}^n\colon C_{t,m}^n\to\PPS_r(\R^d,\kappa)\}_{m,n\in\N\setminus\{0\}}$ define a coarse embedding according to Lemma \ref{lemma:phi_tm_n_ce}. Furthermore, by carefully choosing the constants $l(m,n)$ and $s(m,n)$, Lemmas \ref{lemma:c_implies_bounds_on_dB}, \ref{lemma:c_bounds_for_phi} and \eqref{eq:constraint_for_dist} imply that 
the images $\phi_{t,m}^n(C_{t,m}^n)$ are scattered in the space of periodic point sets. In fact, 
$$\dist(\phi_{t,m}^n(C_{t,m}^n),\phi_{t,m^\prime}^{n^\prime}(C_{t,m^\prime}^{n^\prime}))\geq 2^{\max\{T(m,n),T(m^\prime,n^\prime)\}},$$
which diverges. Therefore, 
$\PPS_r(\R^d,\kappa)$ contains a coarse disjoint union of the family $\{C_{t,m}^n\}_{m,n\in\N\setminus\{0\}}$. Thanks to Lemma \ref{lemma:emb_from_X_to_cube}, $\PPS_r(\R^d,\kappa)$ contains a coarse disjoint union of any family of finite metric spaces, and so Theorem \ref{theo:Laf} implies the desired statement.
\end{proof}

\subsection{Theorem \ref{theo:no_coarse_embedding} for $\PPS^R(\R^d,\kappa)$}\label{sub:theo_no_ce_PPS^R}

In this subsection, we focus on proving the second claim of Theorem \ref{theo:no_coarse_embedding}, namely that there is no coarse embedding of $\PPS^R(\R^d,\kappa)$ into a uniformly convex Banach space. The strategy of the proof is the same as in the proof of the first part of the said theorem, namely to embed the coarse disjoint union of the family $\{C_{t,m}^n\}_{m,n\in\N\setminus\{0\}}$ into the space. The 
embeddings described in \S\ref{sub:theo_no_ce_PPS_r} have no uniform upper bound on the covering radius. Therefore, we have to develop a different approach.

First, let us construct a different embedding of $C_{m}^n=C_{1,m}^n$ for every $m,n\in\N\setminus\{0\}$. Let $\varepsilon\in(0,1)$, and for every positive natural number $N$, denote by 
$$I(N,\varepsilon)=\Big\{\varepsilon\frac{i}{N}\mid i\in\{0,\dots,N-1\}\Big\}.$$
If, furthermore, $L,K$ are positive natural numbers, we define, for every $(x_i)_i\in C_m^n$,
\begin{equation}\label{eq:J_m^n}\begin{multlined}J_m^n((x_i)_i,\varepsilon,L,K)=\bigcup_{i=1}^n(3(m+1)i+(m+1)(i-1)+x_i+I(L,\varepsilon))\cup\\
\cup(3(n+1)(m+1)+n(m+1)+I(K,\varepsilon)).\end{multlined}\end{equation}
We define 
$$M(m,n,\varepsilon)=3(n+1)(m+1)+n(m+1)+\varepsilon.$$

We represent subset $J_m^n((x_i)_i,\varepsilon,L,K)$ in Figure \ref{fig:J_m^n}.

Given $j\in\{1,\dots,n\}$, let us call $A_j((x_i)_i,m,n)$ the subset of $J_m^n((x_i)_i,\varepsilon,L,K)$ of cardinality 
$L$ representing the value $x_j$, 
i.e., the subset $3(m+1)i+(m+1)(j-1)+x_j+I(L,\varepsilon)$. 
Furthermore, let $A_{n+1}((x_i)_i,m,n)$ be the remaining subset of $K$-many points. 
For the sake of simplicity, when $m$ and $n$ are clear from the context, we simply write $A_j((x_i)_i)$ instead of $A_j((x_i)_i,m,n)$.

\begin{figure}
	\centering
\begin{tikzpicture}
\draw (0,0) node [left] {$0$} -- (8,0) (10,0)--(15.3,0) node [right] {$M(m,n,\varepsilon)$};
\draw (0,0.2)--(0,0.6) --(2,0.6) node [pos=0.5,above]{$3(m+1)$} -- (2,0.2);
\draw (3,0.2)--(3,0.6) --(5,0.6) node [pos=0.5,above]{$3(m+1)$} -- (5,0.2);
\draw (6,0.2)--(6,0.6) --(8,0.6) node [pos=0.5,above]{$3(m+1)$} -- (8,0.2);
\draw (10,0.2)--(10,0.6) --(12,0.6) node [pos=0.5,above]{$3(m+1)$} -- (12,0.2);
\draw (13,0.2)--(13,0.6) --(15,0.6) node [pos=0.5,above]{$3(m+1)$} -- (15,0.2);
\draw (2,0.8)--(2,1)--(3,1) node[pos=0.5,above]{$m+1$}--(3,0.8);
\draw (5,0.8)--(5,1)--(6,1) node[pos=0.5,above]{$m+1$}--(6,0.8);
\draw (12,0.8)--(12,1)--(13,1) node[pos=0.5,above]{$m+1$}--(13,0.8);

\draw[dashed] (8,0) -- (10,0);
\draw (2,-0.2) -- (2,-0.4) -- (2.6,-0.4) node [pos=0.5,below]{$x_1$}--(2.6,-0.2);
\fill[red] (2.6,0.07)--(2.6,-0.07)--(2.9,-0.07)--(2.9,0.07)--(2.6,0.07); 
\draw (2.6,0.2)--(2.6,0.4)--(2.9,0.4)node[pos=0.5,above]{$\varepsilon$}--(2.9,0.2);
\draw[->,red] (2.75,-0.75)node[below]{$A_1((x_i)_i)$}--(2.75,-0.2);
\fill[red] (5.3,0.07)--(5.3,-0.07)--(5.6,-0.07)--(5.6,0.07)--(5.3,0.07);
\draw (5.3,0.2)--(5.3,0.4)--(5.6,0.4)node[pos=0.5,above]{$\varepsilon$}--(5.6,0.2);
\draw[->,red] (5.45,-0.75)node[below]{$A_2((x_i)_i)$}--(5.45,-0.2);
\fill[red] (12.65,0.07)--(12.65,-0.07)--(12.95,-0.07)--(12.95,0.07)--(12.65,0.07);
\draw (12.65,0.2)--(12.65,0.4)--(12.95,0.4)node[pos=0.5,above]{$\varepsilon$}--(12.95,0.2);
\draw[->,red] (12.8,-0.75)node[below]{$A_n((x_i)_i)$}--(12.8,-0.2);
\fill[blue] (15,-0.07)--(15,0.07)--(15.3,0.07)--(15.3,-0.07)--(15,-0.07);
\draw (15,0.2)--(15,0.4)--(15.3,0.4)node[pos=0.5,above]{$\varepsilon$}--(15.3,0.2);
\draw[->,blue] (15.15,-0.75)node[below]{$A_{n+1}((x_i)_i)$}--(15.15,-0.2);
\draw (5,-0.2) -- (5,-0.4) -- (5.3,-0.4)node [pos=0.5,below]{$x_2$}--(5.3,-0.2);
\draw (12,-0.2) -- (12,-0.4)-- (12.65,-0.4) node [pos=0.5,below]{$x_n$}--(12.65,-0.2);
\end{tikzpicture}
\caption{A (not to scale) representation of the subset $J_{m}^n((x_i)_i,\varepsilon,L,K)$ for $(x_i)_i\in C_{t,m}^n$, as defined in \eqref{eq:J_m^n}. In red, the subsets $A_j((x_i)_i)$ with $j\in\{1,\dots,n\}$ of $L$ points, while in blue $A_{n+1}((x_i)_i)$ consisting of $K$ points.}\label{fig:J_m^n}
\end{figure}
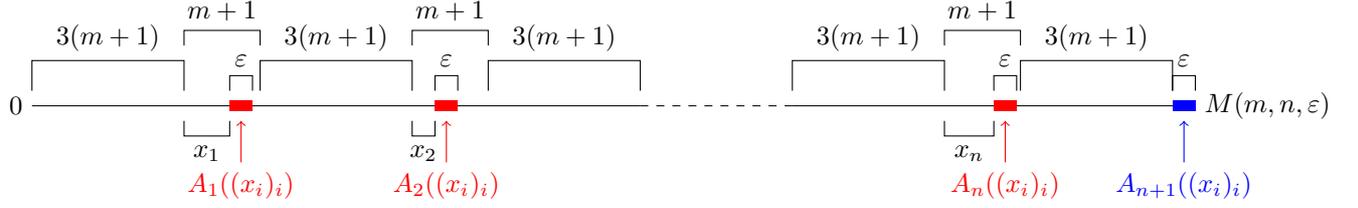

We now construct a coarse embedding $\psi_m^n\colon C_m^n\to\PPS^R(\R^d,\kappa)$. As in the previous section, we define it depending on a family of parameters that will be adjusted later on to achieve the desired properties. Let us fix two natural numbers $L,K>0$, a positive constant $\varepsilon\in (0,1)$ and a real number $l> 3M(m,n,\varepsilon)$. For the sake of simplicity, if $m$, $n$ and $\varepsilon$ are clear, we simply write $M$ for $M(m,n,\varepsilon)$. Let 
$P(l)\subseteq [0,l]^d$ 
be the least dense set of points provided that $\overline{ B(P(l),R)}\supseteq[0,l]^d$ (i.e., $P(l)$ $R$-covers the $d$-dimensional cube of $[0,l]^d$). 

Assume now that $d\geq 2$, and we consider the case $d=1$ separately. Fix, for every $i\in\{2,\dots,d\}$, a real number $s_i \in [3M, l]$, such that the segment $[(0,s_2,\dots,s_d),(l,s_2,\dots,s_d)]$ does not contain points of $P(l)$ (they exists for cardinality reasons). For $(x_i)_i\in C_m^n$, we define
\begin{gather*}
\psi_m^n((x_i)_i)=(Q,(l\Z)^d),\quad\text{where}\\
Q=P(l)\cup((3M,s_2,\dots,s_d)+J_m^n((x_i)_i,\varepsilon,L,K)\cdot e_1).
\end{gather*}
We represent its unit cell in Figure \ref{fig:my_label}. Note that, thanks to the construction of $P(l)$, the periodic point set just defined has covering radius at most $R$.

\begin{figure}
    \centering
\begin{tikzpicture}
    \draw[pattern=my crosshatch dots]  (0,0) rectangle (5,5);
    \draw (-0.2,0) -- (-0.4,0) -- (-0.4,1) node [pos=0.5,left]{$3M$} -- (-0.2,1);
    \draw (-0.5,0) -- (-1.3,0) -- (-1.3,1.475) node [pos=0.5,left]{$s_2$} -- (-0.2,1.475);

    \draw (0,-0.2) -- (0,-0.4) -- (1,-0.4) node [pos=0.5,below]{$3M$} -- (1,-0.2);
    \draw (1,-0.5) -- (1,-0.7) -- (3,-0.7) node [pos=0.5,above]{$M$} -- (3,-0.5);
    \draw[dashed] (0,1)--(5,1) (1,0)--(1,5) (3,0)--(3,1.55)
    (0,1.475)--(5,1.475);
    \fill [white!60!red] (1,1.40) -- (1,1.55) -- (3,1.55) -- (3,1.4) -- (1,1.4);
    \draw (2.5,1.7) node[above]{$J_m^n((x_i)_i,\varepsilon,L,K)$};
    \draw[very thick,->] (0,0)--(5,0)node[below]{$le_1$};
    \draw[very thick,->] (0,0)--(0,5)node[left]{$le_2$};
\end{tikzpicture}    \caption{A (not to scale) representation of the unit cell of $\psi_m^n((x_i)_i,\varepsilon,L,K)$, where $M$ stands for $M(m,n,\varepsilon)$. The red rectangle contains the points of $J_m^n((x_i)_i,\varepsilon,L,K)$ encoding the information about the point $(x_i)_i$.}
\label{fig:my_label}
\end{figure}
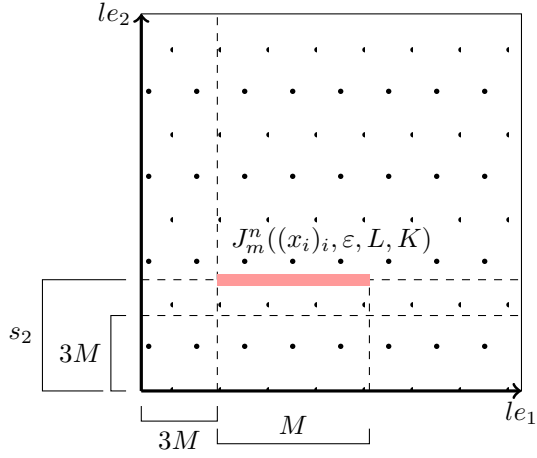

As for the case $d=1$, by choosing an appropriate $s_1\geq 3M$ and possibly increasing the space between consecutive $A_j((x_i)_i,m,n)$, we may assume that, for every $x\in C_m^1$, $s_1+J_m^n(x,\varepsilon,L,K)$ does not intersect $P(l)$. Then, define $\psi_m^1(x)=(P(l)\cup(s_1+J_m^n(x,\varepsilon,L,K)),l\Z)$.

Before proving that, for suitable constants, $\psi_m^n$ is a coarse embedding, we state an immediate, but crucial result.
\begin{lemma}\label{lemma:too_many_points}
Let $X$ be a Delone set in $\R^d$, $m\in\N$, and 
$$N=\max_{x\in\R^d}\lvert B(x,m+1)\cap X\rvert+1.$$
If $Y$ is a subset of $\R^d$ such that $\lvert Y\rvert=N$ and $\diam Y<1$, then, for every injection $f\colon Y\to X$, there exists $y\in Y$ satisfying $\lvert\lvert y-f(y)\rvert\rvert>m$.
\end{lemma}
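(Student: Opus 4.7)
The plan is to prove this by contradiction via a straightforward pigeonhole argument using the triangle inequality. Assume that the conclusion fails, i.e.\ $\|y - f(y)\| \leq m$ for every $y \in Y$, and derive a contradiction with the definition of $N$.

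First I would fix an arbitrary anchor point $y_0 \in Y$. For any $y \in Y$, the triangle inequality together with $\diam Y < 1$ and the standing assumption gives
\[
\|y_0 - f(y)\| \leq \|y_0 - y\| + \|y - f(y)\| < 1 + m.
\]
Hence $f(Y) \subseteq B(y_0, m+1) \cap X$, since $f$ takes values in $X$.

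Next I would apply injectivity of $f$ to get $|f(Y)| = |Y| = N$, so that
\[
N = |f(Y)| \leq |B(y_0, m+1) \cap X| \leq \max_{x \in \R^d} |B(x, m+1) \cap X| = N - 1,
\]
which is the desired contradiction. (The maximum in the definition of $N$ is finite because $X$ is Delone, hence locally finite, so this is well-defined.)

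There is no real obstacle here; the statement is essentially a packing/pigeonhole observation, and the only subtlety is tracking that the strict inequality $\diam Y < 1$ feeds into the open ball $B(y_0, m+1)$ so that the $\leq m$ assumption is still enough to land $f(y)$ inside it. If anything requires care, it is simply making sure that the definition of $N$ is invoked at the correct centre $y_0 \in Y \subseteq \R^d$, which is permitted since the maximum ranges over all of $\R^d$.
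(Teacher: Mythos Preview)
Your proof is correct and essentially identical to the paper's own argument: both proceed by contradiction, fix an anchor point in $Y$, use the triangle inequality with $\diam Y<1$ and $\|y-f(y)\|\leq m$ to place $f(Y)$ inside $B(y_0,m+1)\cap X$, and then obtain the contradiction $N\leq N-1$ from injectivity.
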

\begin{proof}
Suppose, by contradiction, that there is an injection $f$ such that, for every $y\in Y$, $\lvert\lvert y-f(y)\rvert\rvert\leq m$. 
If $x\in Y$, for every $y\in Y$, $\lvert\lvert x-y\rvert\rvert<1$, and so
$$\lvert\lvert x-f(y)\rvert\rvert\leq\lvert\lvert x-y\rvert\rvert +\lvert\lvert y-f(y)\rvert\rvert<1+m.$$
Therefore, the ball $B(x,m+1)$ contains at least $N$-many points of $X$, which contradicts the definition of $N$.
\end{proof}

\begin{lemma}\label{lemma:PPS^R_ce}
In the previous notation and for suitable choices of the parameters $L,K\in\N\setminus\{0\}$ and $l\in\R_{>0}$, 
$\psi_{m}^n$ is a coarse embedding whose control functions do not depend on $m$ and $n$. More precisely:
\begin{compactenum}[(a)]
\item if $\PPS^R(\R^d,\kappa)$ is endowed with $d_B$, then $\psi_{m}^n$ is a $(\rho_-,\rho_+)$-coarse embedding, where $$\rho_-\colon x\mapsto \max\{x-\varepsilon,0\} \quad\text{ and }\quad\rho_+=id;$$
\item if $\PPS^R(\R^d,\kappa)$ is endowed with $\EB$, then $\psi_{m}^n$ is a $(\rho_-,\rho_+)$-coarse embedding, where $$\rho_-\colon x\mapsto \max\{x/2-\varepsilon,0\} \quad\text{ and }\quad \rho_+=id.$$
\end{compactenum}
\end{lemma}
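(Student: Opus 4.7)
The plan is to first establish the upper bound by exhibiting an explicit bijection, and then to establish the lower bound by a rigidity argument that forces any near-optimal bijection to match cluster-to-cluster, after which the bottleneck comes out to essentially $d_m^n(x,y)$.

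\textbf{Step 1: choice of parameters and well-definedness.} I would first fix $\varepsilon\in(0,1)$ and choose $l$ large enough so that the density of the periodic point set $\psi_m^n((x_i)_i)$ equals the prescribed $\kappa$; this forces a relation between $l$, $L$, $K$, and $\lvert P(l)\rvert$. By construction of $P(l)$ the covering radius is at most $R$, so $\psi_m^n((x_i)_i)\in\PPS^R(\R^d,\kappa)$. I then choose $L$ to exceed the maximum number of points of $P(l)$ that can lie in any ball of radius $2$ (which is well defined because $P(l)$ is finite), and I pick $K$ so that $K>L$ and $K$ is still strictly less than twice the number of points in each cluster (so that $A_{n+1}$ is combinatorially distinguishable from every $A_j$, $j\leq n$).

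\textbf{Step 2: upper bound $\rho_+=\mathrm{id}$.} I would define an explicit bijection between $\psi_m^n((x_i)_i)$ and $\psi_m^n((y_i)_i)$ that is the identity on every lattice translate of $P(l)$ and that sends, within each lattice translate of the signature, the cluster $A_j((x_i)_i)$ to $A_j((y_i)_i)$ by the natural order-preserving map along $e_1$. This shifts each signature point by at most $\lvert x_j-y_j\rvert$ along $e_1$, and hence the bijection certifies $d_B(\psi_m^n(x),\psi_m^n(y))\leq \max_j\lvert x_j-y_j\rvert = d_m^n(x,y)$. Since $\EB\leq d_B$, this proves $\rho_+=\mathrm{id}$ for both (a) and (b).

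\textbf{Step 3: lower bound for $d_B$.} Fix a bijection $f$ witnessing $d_B(\psi_m^n(x),\psi_m^n(y))$ up to an error $\eta>0$, and let $D$ be its bottleneck. Each cluster $A_j((x_i)_i)$ has diameter $<\varepsilon<1$. If the bottleneck $D$ were $<1$, then $f$ maps each cluster into a ball of radius $D+\varepsilon < 2$; by the choice of $L$ in Step 1 and Lemma \ref{lemma:too_many_points}, that image cannot lie entirely in $P(l)$-translates, so it must be largely concentrated within a cluster of $\psi_m^n(y)$. A counting argument comparing the cardinalities $L$ vs $K$ forces $A_{n+1}(x)$ to be matched to $A_{n+1}(y)$, and then the remaining clusters must be matched to each other in a bijective way. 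Because the separations $3(m+1)$ between clusters (and the fact that the central segment $A_{n+1}$ is fixed) preserves the relative ordering, the only allowed matching of cluster to cluster is $A_j(x)\mapsto A_j(y)$. Hence there is some $j^*$ with $D\geq \lvert x_{j^*}-y_{j^*}\rvert -\varepsilon$, namely $j^*=\mathrm{argmax}_j\lvert x_j-y_j\rvert$. This yields $d_B(\psi_m^n(x),\psi_m^n(y))\geq d_m^n(x,y)-\varepsilon$, and we can drop the inequality to $0$ when the right-hand side is negative.

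\textbf{Step 4: lower bound for $\EB$.} Now I additionally have to fight an isometry $\Psi\in\Iso(\R^d)$ that is applied before the bijection. The same reasoning as in the proof of Lemma \ref{lemma:phi_tm_n_ce}(b) applies: the signature points are all concentrated in a very sparse and asymmetric ``corner'' of each unit cell, whose position inside the cube $[0,l]^d$ is uniquely determined by the chosen parameters $s_2,\ldots,s_d$, and the set $P(l)$ that covers the remaining cell is generic (by choice of $s_i$'s it contains no signature-aligned row). Consequently the linear part of $\Psi$ must preserve the lattice $(l\Z)^d$ and the position of the signature, which forces it to fix $e_1$ (up to sign). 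Thus $\Psi$ reduces to a translation, and a translation that reduces the distance between the two signatures by $\tau$ creates a misalignment of exactly $\tau$ between the two copies of $P(l)$. Optimizing this trade-off gives $\EB(\psi_m^n(x),\psi_m^n(y))\geq \tfrac12(d_m^n(x,y)-\varepsilon)$, slightly reorganized to $\max\{d_m^n(x,y)/2-\varepsilon,0\}$ by absorbing the factor into $\varepsilon$.

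\textbf{Main obstacle.} The technically delicate step is Step 4, namely proving that the isometries $\Psi$ cannot exploit hidden symmetries of $(l\Z)^d$ to shuffle coordinates and thereby cheapen the bottleneck. I would handle this by choosing the $s_i$'s pairwise distinct and away from any symmetry plane of $[0,l]^d$, so that the full signature (including $A_{n+1}$) sits at a point whose stabilizer in the symmetry group of the cube is trivial. Step 3's cluster-recognition argument is conceptually cleanest but depends on quantitative control of $L$ vs.~the local density of $P(l)$, which is why $L$ has to be defined after $l$ (the circular-looking dependence is resolved because $l$ depends only on $\varepsilon$, $d$, $\kappa$, $R$ through the density constraint, and then $L$ is chosen accordingly, after which $K$ and the remaining positional parameters are adjusted).
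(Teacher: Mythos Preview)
Your overall strategy---upper bound via the obvious bijection, lower bound via a cluster-recognition argument forcing $A_j\mapsto A_j$---is the same as the paper's, but two of your parameter choices break the argument.

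\textbf{The radius defining $L$ must scale with $m$, not be fixed at $2$.} In Step~1 you pick $L$ so that no ball of radius~$2$ in the background $P(l)$-set contains $L$ points, and in Step~3 you invoke this only under the hypothesis ``if the bottleneck $D$ were $<1$''. But you never handle $D\in[1,m]$, and that case is not vacuous: the upper bound only gives $D\le d_m^n(x,y)\le m$, while you must prove $D\ge d_m^n(x,y)-\varepsilon$ for \emph{all} pairs. The paper sets
\[
L=\max_{x\in\R^d}\lvert B(x,m+1)\cap X\rvert+1,
\]
so that Lemma~\ref{lemma:too_many_points} applies whenever $D\le m$, i.e.\ always, and the cluster-recognition runs unconditionally. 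This makes $L$ depend on $m$, which is harmless since $\rho_\pm$ remain independent of $m,n$. Relatedly, the paper takes $K\ge 2L$ (not $L<K<2L$): in Claim~\ref{claim:matching_n+1} one must rule out $A_{n+1}((x_i)_i)$ being matched into some $\Phi(A_j((y_i)_i))$ with $j\le n$; such a block absorbs at most $L$ points, and one needs the remaining $K-L\ge L$ points to trigger Lemma~\ref{lemma:too_many_points} a second time.

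\textbf{The anchor for the translation trade-off must be $A_{n+1}$, not $P(l)$.} In Step~4 you write that a translation by $\tau$ ``creates a misalignment of exactly $\tau$ between the two copies of $P(l)$''. But we are in $\PPS^R$, with no lower bound on packing radius; nothing prevents $P(l)$ and $P(l)+\tau e_1$ from being at bottleneck distance much smaller than $\tau$. The paper's argument runs differently: once the cluster-matching Claims~\ref{claim:matching_n+1} and~\ref{claim:matching} are established (for the isometrically moved copy $\Phi(\psi_m^n((y_i)_i))$, using the $m$-dependent $L$ above), they directly force the linear part of $\Phi$ to preserve the $e_1$-direction, so $\Phi$ acts on the signature as translation by some $ke_1$. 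Then the \emph{common} block $A_{n+1}$ (identical for $(x_i)_i$ and $(y_i)_i$, diameter $<\varepsilon$) is the anchor: Claim~\ref{claim:matching} gives $D\ge k-\varepsilon$, while the $j$-th blocks give $D\ge\lvert x_j-y_j\rvert-k-\varepsilon$, and optimising over $k$ yields the factor $1/2$ in~(b). Your symmetry-breaking via generic $s_i$'s is not how the paper rules out rotations, and without the cluster-matching claims at full strength it does not by itself control the isometry.
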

\begin{proof}
We prove (b), whereas (a) can be deduced using some of the ideas here described. More explicitly, we show that, for every $(x_i)_i,(y_i)_i\in C_m^n$,
\begin{equation}\label{eq:EB_PPS^R}\frac{d_m^n((x_i)_i,(y_i)_i)}{2}-\varepsilon\leq \EB(\psi_m^n((x_i)_i),\psi_m^n((y_i)_i))\leq d_m^n((x_i)_i,(y_i)_i)\end{equation}
(see \eqref{eq:d_m^n} for the definition of the metric $d_m^n$). The upper bound of \eqref{eq:EB_PPS^R} always holds independently of the parameters. In fact, already \begin{equation}\label{eq:d_B_upper_bound}d_B(\psi_m^n((x_i)_i),\psi_m^n((y_i)_i))\leq d_m^n((x_i)_i,(y_i)_i)\end{equation} 
since the most immediate  matching already achieves the value $d_m^n((x_i)_i,(y_i)_i)$. Indeed, consider the matching associating all the points in $P(l)$ to themselves, and, for every $j=1,\dots,n+1$, $A_j((x_i)_i)$ to $A_j((y_i)_i)$ (see Figure \ref{fig:upper_bound}).
\begin{figure}
	\centering
\begin{tikzpicture}
 \draw[pattern=my crosshatch dots]  (-1,-2) rectangle (13.3,2);

\draw (0,0) 
--(12.3,0) 
;
\draw (0,0.2)--(0,0.6) --(3,0.6) node [pos=0.5,above]{$3(m+1)$} -- (3,0.2);
\draw (4.5,0.2)--(4.5,0.6) --(7.5,0.6) node [pos=0.5,above]{$3(m+1)$} -- (7.5,0.2);
\draw (9,0.2)--(9,0.6) --(12,0.6) node [pos=0.5,above]{$3(m+1)$} -- (12,0.2);
\draw[green]  (3.3,0) to[out=-90,in=-90,distance=0.4cm] (3.85,0) (3.45,0) to[out=-90,in=-90,distance=0.4cm] (4,0) (3.6,0) to[out=-90,in=-90,distance=0.4cm] (4.15,0);

\draw[red] (3,-0.2) -- (3,-0.4) -- (3.3,-0.4) node [pos=0.5,below]{$x_1$}--(3.3,-0.2);
\fill[red] (3.6,0) circle (1.5pt) (3.45,0) circle (1.5pt) (3.3,0) circle (1.5pt);
\fill[blue] (3.85,0) circle (1.5pt) (4,0) circle (1.5pt) (4.15,0) circle (1.5pt);
\draw[blue] (3,0.2) -- (3,0.4) -- (3.85,0.4) node [pos=0.5,above]{$y_1$}--(3.85,0.2);
\draw[green]  (8.5,0) to[out=90,in=90,distance=0.4cm] (7.7,0) (8.65,0) to[out=90,in=90,distance=0.4cm] (7.85,0) (8.8,0) to[out=90,in=90,distance=0.4cm] (8,0);

\fill[red] (8.5,0) circle (1.5pt) (8.65,0) circle (1.5pt) (8.8,0) circle (1.5pt);
\fill[blue] (7.7,0) circle (1.5pt) (7.85,0) circle (1.5pt) (8,0) circle (1.5pt);
\draw[blue] (7.5,0.2) -- (7.5,0.4) -- (7.7,0.4) node [pos=0.5,above]{$y_2$}--(7.7,0.2);

\draw[->,red] (3.45,-0.95)node[below]{$A_1((x_i)_i)$}--(3.45,-0.4);
\draw[->,blue] (4,0.75)node[above]{$A_1((y_i)_i)$}--(4,0.2);
\draw[->,blue] (7.85,0.75)node[above]{$A_2((y_i)_i)$}--(7.85,0.4);

\draw[->,red] (8.65,-0.95)node[below]{$A_2((x_i)_i)$}--(8.65,-0.2);
\fill (12,0) circle (1.5pt) (12.1,0) circle (1.5pt) (12.2,0) circle (1.5pt) (12.3,0) circle (1.5pt);
\draw[->] (12.15,-0.95)--(12.15,-0.2);
\draw (11.4,-0.95)node[below]{$A_{3}((x_i)_i)=A_3((y_i)_i)$};
\draw[red] (7.5,-0.2) -- (7.5,-0.4) -- (8.5,-0.4)node [pos=0.5,below]{$x_2$}--(8.5,-0.2);
\end{tikzpicture}
\caption{A matching showing the inequality \eqref{eq:d_B_upper_bound} for $n=2$. Portions of the periodic point sets $\psi_m^2((x_i)_i)$ and $\psi_m^2((y_i)_i)$ are represented as the union of the red and the black dots and of the blue and the black dots, respectively. Then, we consider the matching associating the black dots with themselves and the red dots with the blue ones according to the green lines. By construction, this matching associates points that are at most $\max_{i=1,2}\lvert x_i-y_i\rvert=d_m^2((x_i)_i,(y_i)_i)$-far apart.}\label{fig:upper_bound}
\end{figure}
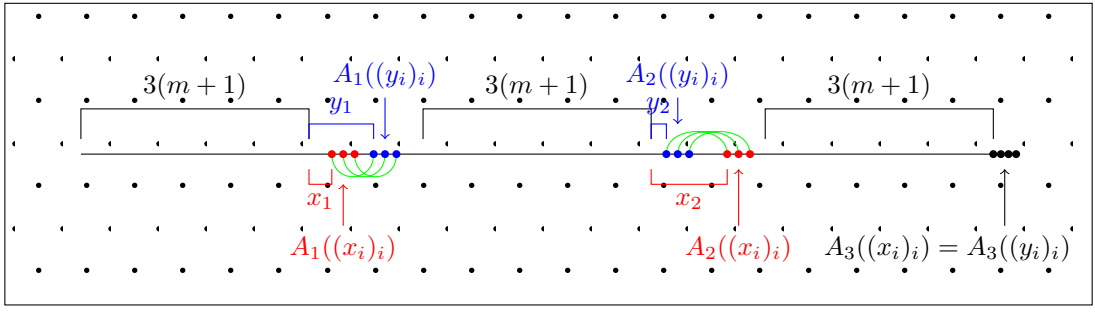

Let $X$ be the periodic point set having $P(l)$ as motif and $(l\Z)^d$ as lattice. Define 
\begin{equation}\label{eq:L_def}L=\max_{x\in\R^d}\lvert B(x,m+1)\cap X\rvert+1\end{equation}
and choose $K\geq 2L$.

Let $\Phi\in\Iso(\R^d)$ such that $$\EB(\psi_m^n((x_i)_i),\psi_m^n((y_i)_i))=d_B(\psi_m^n((x_i)_i),\Phi(\psi_m^n((y_i)_i))).$$
In particular, $d_B(\psi_m^n((x_i)_i),\Phi(\psi_m^n((y_i)_i)))\leq m$. 
Let $f$ be the matching between $\psi_m^n((x_i)_i)$ and $\Phi(\psi_m^n((y_i)_i))$ such that $\sup_{x\in \psi_m^n((x_i)_i)}\lvert\lvert x-f(x)\rvert\rvert$ realizes the bottleneck distance (see Proposition \ref{PropDbMIN}). 

In the next part of the proof, we show that $\Phi$ can be assumed to be a translation without loss of generality. In Figure \ref{fig:lower_bound}, we represent the steps to achieve this result.

\begin{claim}\label{claim:matching_n+1}
$f$ matches at least one point of a copy of $A_{n+1}((x_i)_i)$ to one of $\Phi(A_{n+1}((y_i)_i))$.
\end{claim}
\begin{proof}[Proof of Claim \ref{claim:matching_n+1}]
Let us consider the union of balls of radius $m$ around 
the points of $A_{n+1}((x_i)_i)$. Since $K\geq L$ and $\diam A_{n+1}((x_i)_i)$, Lemma \ref{lemma:too_many_points} implies that $f$ cannot match all the $K$ points to points of $X$, and so there exists $j\in\{1,\dots,n+1\}$ such that $\Phi(A_{j}((y_i)_i)$ has non-empty intersection with $B\big(A_{n+1}((x_i)_i),m\big)$. Note that $j$ is unique since the distance between two of those blocks is larger than $3(m+1)$.

If $j\neq n+1$, $f$ can match at most $L$ points of $A_{n+1}((x_i)_i)$ to $\Phi(A_j((y_i)_i))$. Thus, $A_{n+1}((x_i)_i)$ would still have at least $L$ points unmatched ($K\geq 2L$). Using Lemma \ref{lemma:too_many_points} again, we can show that the remaining points cannot be matched with $\Phi(X)$. Therefore, $j=n+1$.
\end{proof}
\begin{claim}\label{claim:matching}
For every $j\in\{1,\dots,n+1\}$, $f$ matches at least one point of a copy of $A_{j}((x_i)_i)$ to one of $\Phi(A_{j}((y_i)_i))$.
\end{claim}
\begin{proof}[Proof of Claim \ref{claim:matching}]
Claim \ref{claim:matching_n+1} implies the result for $j=n+1$. Consider now $A_n((x_i)_i)$. These $L$ points cannot be matched with $\Phi(X)$ because of Lemma \ref{lemma:too_many_points}. By construction, since $A_{n+1}((x_i)_i)$ is at distance at most $m$ from $\Phi(A_{n+1}((y_i)_i))$, the only block that can be at distance at most $m$ from $A_n((x_i)_i)$ is $\Phi(A_n((y_i)_i))$. Therefore, the claim is verified also for $j=n$. Iterating this procedure, the claim follows.
\end{proof}

As a consequence of Claim \ref{claim:matching}, we have that the linear part of $\Phi$ has to preserve the vector $e_1$. Therefore, on the subsets $A_j((y_i)_i)$ and their copies $\Phi$ acts as a translation by a vector $v$. Without loss of generality, we can assume that $v=ke_1$ (otherwise, take its projection onto the line generated by $e_1$) and $k\in[0,l)$. 

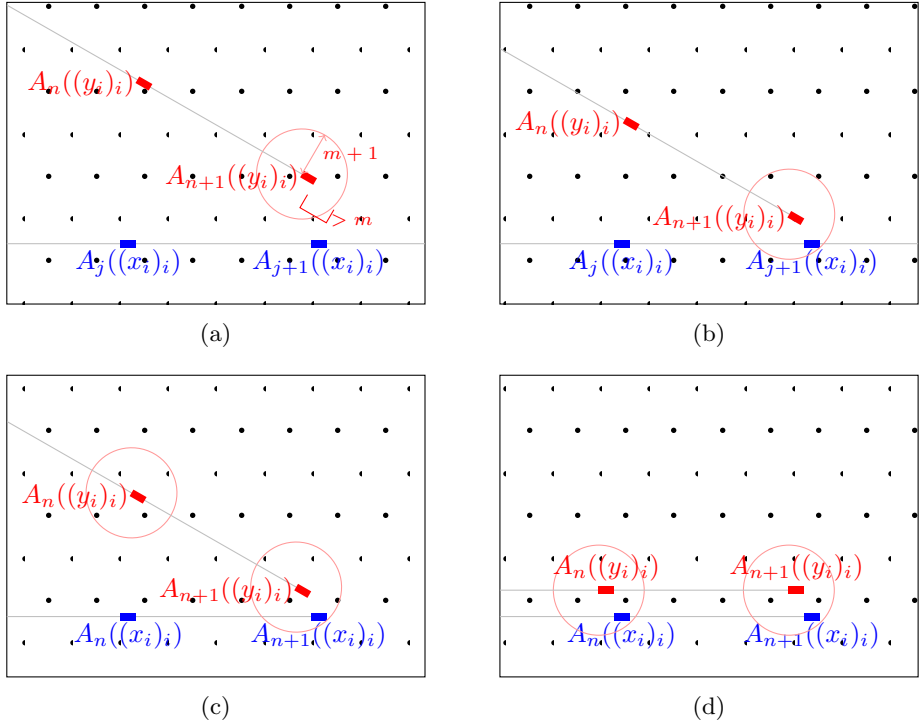
\begin{figure}
    \centering
  \begin{subfigure}[b]{0.4\textwidth}
\centering \begin{tikzpicture}
 \draw[pattern=my crosshatch dots]  (0,1) rectangle (5.5,5);
 \draw[lightgray] (0,1.8)--(5.5,1.8);
 \fill[blue] (4,1.75)--(4.2,1.75)--(4.2,1.85)--(4,1.85)--(4,1.75);
 \draw[blue] (4.1,1.85) node[below]{$A_{j+1}((x_i)_i)$};
 \fill[blue] (1.5,1.75)--(1.7,1.75)--(1.7,1.85)--(1.5,1.85)--(1.5,1.75);
\draw[blue] (1.6,1.85) node[below]{$A_{j}((x_i)_i)$};
 \begin{scope}[rotate=-30,shift={(-2,2.5)}]
   \draw[white!60!red] (4,1.8) circle (17pt);
     \draw[lightgray] (-0.48,1.8)--(4,1.8);
 \fill[red] (4,1.75)--(4.2,1.75)--(4.2,1.85)--(4,1.85)--(4,1.75);
  \draw[red] (4.1,1.8) node[left]{$A_{n+1}((y_i)_i)$};
 \draw[red] (1.6,1.8) node[left]{$A_n((y_i)_i)$};
 \fill[red] (1.5,1.75)--(1.7,1.75)--(1.7,1.85)--(1.5,1.85)--(1.5,1.75);
  \draw[red] (4.2,1.6)--(4.2,1.4)--(4.6,1.4)node[pos=0.82,right]{\scriptsize $>m$}--(4.6,1.6);
 \draw[<->,white!60!red] (4,1.8)--(4,2.4);
 \draw[red] (4,2.1)node[right]{\scriptsize$m+1$};
    \end{scope}
 \end{tikzpicture}
 \caption{}\end{subfigure}
  \begin{subfigure}[b]{0.4\textwidth}
\centering
\begin{tikzpicture}
 \draw[pattern=my crosshatch dots]  (0,1) rectangle (5.5,5);
 \draw[lightgray] (0,1.8)--(5.5,1.8);
 \fill[blue] (4,1.75)--(4.2,1.75)--(4.2,1.85)--(4,1.85)--(4,1.75);
 \draw[blue] (4.1,1.85) node[below]{$A_{j+1}((x_i)_i)$};
 \fill[blue] (1.5,1.75)--(1.7,1.75)--(1.7,1.85)--(1.5,1.85)--(1.5,1.75);
\draw[blue] (1.6,1.85) node[below]{$A_{j}((x_i)_i)$};
 \begin{scope}[rotate=-30,shift={(-1.8,2)}]
   \draw[white!60!red] (4,1.8) circle (17pt);
     \draw[lightgray] (-0.39,1.8)--(4,1.8);
 \fill[red] (4,1.75)--(4.2,1.75)--(4.2,1.85)--(4,1.85)--(4,1.75);
  \draw[red] (4.1,1.8) node[left]{$A_{n+1}((y_i)_i)$};
 \draw[red] (1.6,1.8) node[left]{$A_n((y_i)_i)$};
 \fill[red] (1.5,1.75)--(1.7,1.75)--(1.7,1.85)--(1.5,1.85)--(1.5,1.75); 
 \end{scope}
 \end{tikzpicture}
  \caption{}\end{subfigure} 
  
  \vspace{0.3cm}
  
  \begin{subfigure}[b]{0.4\textwidth}
\centering
\begin{tikzpicture}
 \draw[pattern=my crosshatch dots]  (0,1) rectangle (5.5,5);
  \draw[lightgray] (0,1.8)--(4,1.8);
 \fill[blue] (4,1.75)--(4.2,1.75)--(4.2,1.85)--(4,1.85)--(4,1.75);
 \draw[blue] (4.1,1.85) node[below]{$A_{n+1}((x_i)_i)$};
 \fill[blue] (1.5,1.75)--(1.7,1.75)--(1.7,1.85)--(1.5,1.85)--(1.5,1.75);
\draw[blue] (1.6,1.85) node[below]{$A_{n}((x_i)_i)$};
 \begin{scope}[rotate=-30,shift={(-1.8,2)}]
   \draw[white!60!red] (4,1.8) circle (17pt);
      \draw[white!60!red] (1.5,1.8) circle (17pt);
     \draw[lightgray] (-0.39,1.8)--(4,1.8);
 \fill[red] (4,1.75)--(4.2,1.75)--(4.2,1.85)--(4,1.85)--(4,1.75);
  \draw[red] (4.1,1.8) node[left]{$A_{n+1}((y_i)_i)$};
 \draw[red] (1.6,1.8) node[left]{$A_n((y_i)_i)$};
 \fill[red] (1.5,1.75)--(1.7,1.75)--(1.7,1.85)--(1.5,1.85)--(1.5,1.75); 
 \end{scope}
 \end{tikzpicture}
  \caption{}
  \end{subfigure}
  \begin{subfigure}[b]{0.4\textwidth}
\centering
\begin{tikzpicture}
 \draw[pattern=my crosshatch dots]  (0,1) rectangle (5.5,5);
   \draw[lightgray] (0,1.8)--(4,1.8);
 \fill[blue] (4,1.75)--(4.2,1.75)--(4.2,1.85)--(4,1.85)--(4,1.75);
 \draw[blue] (4.1,1.85) node[below]{$A_{n+1}((x_i)_i)$};
 \fill[blue] (1.5,1.75)--(1.7,1.75)--(1.7,1.85)--(1.5,1.85)--(1.5,1.75);
\draw[blue] (1.6,1.85) node[below]{$A_{n}((x_i)_i)$};
 \begin{scope}[shift={(-0.2,0.35)}]
   \draw[white!60!red] (4,1.8) circle (17pt);
      \draw[white!60!red] (1.5,1.8) circle (17pt);
     \draw[lightgray] (0.2,1.8)--(4,1.8);
 \fill[red] (4,1.75)--(4.2,1.75)--(4.2,1.85)--(4,1.85)--(4,1.75);
  \draw[red] (4.1,1.8) node[above]{$A_{n+1}((y_i)_i)$};
 \draw[red] (1.6,1.8) node[above]{$A_n((y_i)_i)$};
 \fill[red] (1.5,1.75)--(1.7,1.75)--(1.7,1.85)--(1.5,1.85)--(1.5,1.75); 
 \end{scope}
 \end{tikzpicture}
  \caption{}
  \end{subfigure}

    \caption{A representation of the steps of the part of the proof of Lemma \ref{lemma:PPS^R_ce} showing that we can assume, without loss of generality, $\Phi$ to be a translation in the case $d=2$. A portion of the periodic point set $\psi_m^n((x_i)_i)$ is represented by the points contained in the blue boxes and the black dots belonging to $X$. We represent in red the images of the blocks $A_n((y_i)_i)$ and $A_{n+1}((y_i)_i)$ along the isometry $\Phi$. The red ball of radius $m+1$ around the first of the $K$ points of $A_{n+1}((y_i)_i)$ (which contains the union of the balls of radius $m$ centered in the $K$ points of $A_{n+1}((y_i)_i)$) does not contain $L\leq K$ points of $\psi_m^n((x_i)_i)$ if it does not intersect any of the blue blocks (see \eqref{eq:L_def}) This situation is shown in (a). Hence, any matching between the two periodic point sets connects at least a pair of points that are more than $m$ far apart. The same happens if the red ball intersects a block $A_{j+1}((x_i)_i)$ with $j\neq n$, as in (b). In fact, the blue block can absorb at most $L$ points, so there will still be at least $L$ unmatched points in $A_{n+1}((y_i)_i)$. In (c), the ball of radius $m+1$ centred in the first of the points of the red block $A_n((y_i)_i)$ does not intersect any blue block, and so again the conclusion since that red block contains $L$ points. Since these observations can be repeated for every red block, $\Phi$ does not rotate $\psi_m^n((y_i)_i)$ as depicted in (d).}\label{fig:lower_bound}
\end{figure}

If $k=0$, then Claim \ref{claim:matching} implies that for every $j\in\{1,\dots,n\}$ there exists $x\in A_j((x_i)_i)$ such that $$\lvert\lvert x-f(x)\rvert\rvert\geq\lvert x_i-y_i\rvert-\varepsilon$$ (if $x_i<y_i$, the right-most point of $A_j((x_i)_i)$ can be matched with the left-most one of $A_j((y_i)_i)$). If $k$ were not $0$, this distance could decrease. However, it would also create a mismatch between the points of $A_{n+1}((x_i)_i)$ and $A_{n+1}((y_i)_i$. More precisely, according to Claim \ref{claim:matching_n+1},
$$\max_{x\in A_{n+1}((x_i)_i)}\lvert\lvert x-f(x)\rvert\rvert\geq\dist(A_{n+1}((x_i)_i),ke_1+A_{n+1}((y_i)_i))\geq k-\varepsilon.$$
Thus, the best trade off can achieve at most the value $((d_m^n((x_i)_i,(y_i)_i)-\varepsilon)-\varepsilon)/2$, which is the lower bound in \eqref{eq:EB_PPS^R}.
\end{proof}

We are now ready to prove the second part of Theorem \ref{theo:no_coarse_embedding}.

\begin{proof}[Proof of Theorem \ref{theo:no_coarse_embedding} for $\PPS^R(\R^d,\kappa)$] Let us adopt the notation above. Similarly to the case of $\PPS_r(\R^d,\kappa)$, we fix the parameters defining the embeddings $\psi_m^n$ recursively on the order $\preceq$. Let us consider a pair of indices $(m,n)\in(\N\setminus\{0\})^2$, and suppose that we have defined the embeddings $\psi_{m^\prime}^{n^\prime}$ for every $(m^\prime,n^\prime)\prec(m,n)$. We require that
\begin{equation}\label{eq:def_K_PPSR}K\geq\max_{\substack{x\in\R^d\\(m^\prime,n^\prime)\prec(m,n)\\
(x_i)_i\in C_{m^\prime}^{n^\prime}}}\lvert B(x,2^{T(m,n)}+1)\cap\psi_{m^\prime}^{n^\prime}((x_i)_i)\rvert +1.\end{equation}
Then, with this further requirement, we can choose $l$ and $K$ to match the desired density. Thus, $\{\psi_m^n\}_{m,n\in\N\setminus\{0\}}$ is a coarse embedding into $\PPS^R(\R^d,\kappa)$. We just need to show that 

$$\dist_{EB}(\psi_m^n(C_m^n),\psi_{m^\prime}^{n^\prime}(C_{m^\prime}^{n^\prime}))\to\infty$$ to apply Lemma \ref{lemma:emb_from_X_to_cube} and Theorem \ref{theo:Laf} and conclude the proof.

Let $(m,n),(m^\prime,n^\prime)\in(\N\setminus\{0\})^2$ be two distinct pair of indices. Without loss of generality, we can assume that $(m^\prime,n^\prime)\prec(m,n)$. Then, thanks to \eqref{eq:def_K_PPSR} and Lemma \ref{lemma:too_many_points}, for every $(x_i)_i\in C_m^n$, $(y_i)_i\in C_{m^\prime}^{n^\prime}$, $\Phi\in\Iso(\R^d)$ and every bijection $f\colon\psi_m^n((x_i)_i)\to\Phi(\psi_{m^\prime}^{n^\prime}((y_i)_i))$, there exists $x\in A_{n+1}((x_i)_i,m,n)$ such that $\lvert\lvert x-f(x)\rvert\rvert>2^{T(m,n)}$. Hence, $\EB(\psi_m^n((x_i)_i),\psi_{m^\prime}^{n^\prime}((y_i)_i))>2^{T(m,n)}$, and so $$\dist_{EB}(\psi_m^n(C_m^n),\psi_{m^\prime}^{n^\prime}(C_{m^\prime}^{n^\prime}))>2^{T(m,n)}\to\infty.$$
\end{proof}

\appendix

\section{On the definition of the extended pseudo-metrics}\label{appendix}

\begin{proposition}
\label{PropDbMIN}
Assume $X$ and $Y$ are periodic point sets in $\R^n$. In the definition of $d_B$ (Definition \ref{DefDB})
$$
d_B(X,Y)=\inf_{f\colon X\to Y\text{ bijection}}\sup_{x\in X}\lvert\lvert x-f(x)\lvert\lvert
$$
the bijection realizing the infimum is attained (i.e., we can replace inf by min), while the supremum may not be attained (i.e., we cannot replace sup by max).
\end{proposition}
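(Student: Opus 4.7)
I would treat the two assertions separately.

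\emph{The infimum is attained.} Assume $d_B(X,Y) = D < \infty$; otherwise any bijection (which exists since $X$ and $Y$ are countably infinite) realizes the infimum $\infty$ trivially. The strategy is a diagonal/compactness argument exploiting that periodic point sets are Delone, hence locally finite. Take a minimizing sequence of bijections $f_n \colon X \to Y$ with $\sup_{x\in X}\|x - f_n(x)\| \leq D + 1/n$. For each $x \in X$, the values $\{f_n(x)\}_n$ all lie in the finite set $\overline{B(x, D+1)} \cap Y$; symmetrically, $\{f_n^{-1}(y)\}_n$ lies in $\overline{B(y, D+1)} \cap X$ for every $y \in Y$. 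Enumerating $X = \{x_1, x_2, \dots\}$ and $Y = \{y_1, y_2, \dots\}$ and running a diagonal argument, I would extract a subsequence $(f_{n_k})_k$ along which $f_{n_k}(x_i)$ stabilizes to some $f(x_i)$ and $f_{n_k}^{-1}(y_j)$ stabilizes to some $g(y_j)$ for all $i,j$. Clearly $\|x - f(x)\| \leq D$. Injectivity of $f$ is then automatic: if $f(x) = f(x')$ with $x \neq x'$, then $f_{n_k}(x) = f_{n_k}(x')$ for large $k$, contradicting the injectivity of $f_{n_k}$. Surjectivity follows from the identity $f(g(y)) = y$, which holds because both the forward and the inverse sequences have stabilized. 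Hence $f$ is the required minimizing bijection.

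\emph{The supremum need not be attained.} I would exhibit one explicit bijection whose sup is not a max. Fix an irrational $\alpha \in \R$, take $X = \Z^2$, and let $Y$ be the image of $\Z^2$ under the shear $A = \left(\begin{smallmatrix} 1 & 0 \\ \alpha & 1 \end{smallmatrix}\right)$; both are lattices of density one, and in particular $d_B(X,Y) < \infty$ by Theorem \ref{theo:connected_component_density}. Define $f \colon X \to Y$ by
\[
f(m,n) = \bigl(m,\, n + m\alpha - \lfloor m\alpha \rfloor\bigr).
\]
A direct check shows that $f$ is a bijection onto $Y$, and the displacement satisfies $\|f(m,n) - (m,n)\| = m\alpha - \lfloor m\alpha \rfloor \in [0,1)$ for every $(m,n) \in X$. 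By Weyl's equidistribution theorem, these fractional parts come arbitrarily close to $1$ as $m$ varies over $\Z$ but never attain it, so the supremum equals $1$ and is not realized.

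The main obstacle will be the bookkeeping in the diagonal argument: the subsequence has to be extracted so as to stabilize $f_n$ and $f_n^{-1}$ \emph{simultaneously}, so that the limit is automatically a bijection and one need not verify surjectivity independently. This is purely technical but must be written out with some care.
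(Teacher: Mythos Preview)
Your compactness argument for the infimum is correct and is essentially the paper's proof in different packaging: both extract a subsequence along which the bijections and their inverses stabilize on finite pieces, using that a Delone set meets any ball in finitely many points. The paper organizes the diagonalization by alternately stabilizing $f_i$ on $B(0,r)\cap X$, then $f_i^{-1}$ on $B(0,3r)\cap Y$, then $f_i$ on $B(0,5r)\cap X$, and so on; you instead enumerate $X$ and $Y$ and stabilize both directions simultaneously. The content is the same.

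Your treatment of the supremum misses the point of the statement. ``We cannot replace $\sup$ by $\max$'' refers to the formula as a whole: one must exhibit $X,Y$ for which the \emph{minimizing} bijection fails to attain its supremum. Producing \emph{some} bijection whose sup is not a max is trivial and is exactly what you do: your map $f(m,n)=(m,\,n+m\alpha-\lfloor m\alpha\rfloor)$ has $\sup_x\|x-f(x)\|=1$, but for your pair of lattices one already has $d_B(X,Y)=1/2$, realized by the nearest-point map $(m,n)\mapsto(m,\,n+m\alpha-[m\alpha])$ with $[\,\cdot\,]$ the nearest integer. So your $f$ is not a minimizer and your example, as written, proves nothing about the proposition. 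The paper uses essentially the same pair of lattices (roles of $X$ and $Y$ interchanged), but takes this nearest-point bijection, observes that its displacements are the numbers $|k\alpha-[k\alpha]|\in[0,1/2)$ with supremum $1/2$ never attained (irrationality forbids any $k\alpha$ from being a half-integer), and argues it is optimal. To repair your argument you need both changes: replace the floor by the nearest integer, and show $d_B(X,Y)\ge 1/2$ (e.g.\ any bijection with all displacements $<1/2$ must preserve columns and then agree with the nearest-point map, whose supremum is $1/2$). Incidentally, Weyl equidistribution is more than you need; density of $\{m\alpha\}$ in $[0,1)$ follows from irrationality alone.
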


\begin{proof}
In order to prove that the supremum may not be attained consider two lattices in $\R^2$, lattice $X$ generated by $(1,0)$ and $(\alpha,1)$ for some irrational $\alpha \in (0,1)$, and lattice $Y$ being the standard $\Z^2$ lattice. 

Observe that the vertical distance between horizontal cosets is $1$. Thus the coset shift along the horizontal cosets (see Figure \ref{fig:coset}) is a bijection $f \colon X \to Y$ mapping each point of $X$ to its unique (by irrationality of $\alpha$) closest point in $Y$ at distance less than $1/2$, and is thus an optimal bijection. (See Lemma \ref{LemDifference} for a similar argument for the case of $d_{EB}$) On the other hand, as $\alpha$ is irrational we have $\sup_{x\in X}\lvert\lvert x-f(x)\lvert\lvert=1/2$.

We now prove that the bijection realizing the infimum is attained. Assume $d_B(X,Y)=r$. Then for each $i \in \N$ there exists a bijection $f_i \colon X \to Y$ such that $d(x,f_i(x))\leq r + 1/i$. Let $\Phi = \{f_i\}_{i\in \N}$ denote the resulting sequence of bijections. We will construct a ``limit bijection'' $f$. 

Let $0$ denote the zero vector in $\R^n$.
\begin{enumerate}
    \item For each $i$ we have $f_i(B(0,r)\cap X)\subset B(0,3r)\cap Y$. As $B(0,r)\cap X$ and $B(0,3r)\cap Y$ are finite we may replace $\Phi$ by a subsequence such that functions $f_i$ coincide on $B(0,r)\cap X$. Define $f|_{B(0,r)\cap X}=f_i$.
    \item For each $i$ we have $f^{-1}_i(B(0,3r)\cap Y)\subset B(0,5r)\cap X$. As before we may replace $\Phi$ by a subsequence such that functions $f^{-1}_i$ coincide on $B(0,3r)\cap Y$. 
    \item We proceed inductively by alternating between steps 1. and 2. to progressively define $f$ on an ever larger set. For example, in the third step we  replace $\Phi$ by a subsequence such that functions $f_i$ coincide on $B(0,5r)\cap X$, and define $f|_{B(0,5r)\cap X}=f_i$.
\end{enumerate}
The resulting rules defining $f$ on progressively larger balls constitute a well defined bijection:
\begin{itemize}
    \item Map $f$ is well defined: For each $x \in X$ point $f(x)$ is defined when defining $f|_{B(0,(2k+1)r)\cap X}$ for the first $k$ for which $(2k+1)r> ||x||$ and the definition does not change during later steps.
    \item Map $f$ is injective: For points $x_1 \neq x_2$ of $X$ contained in some $B(0,(2k+1)r)$, their $f$ images differ as the definition of $f|_{B(0,(2k+1)r)\cap X}$ in an inductive step arises from injective function.
    \item Map $f$ is surjective: Given $y\in Y \cap B(0,4kr)$, an inductive version of step 2. guarantees that for all subsequently considered bijections $f_i$ the preimage $f_i^{-1}(y)$ is a fixed single point, whose image via $f$ is $y$.
\end{itemize}
By construction $d(x,f(x)) \leq r, \forall x\in X$ as $f(x)=f_i(x)$ for infinitely many original functions $f_i$. Thus $f$ is a bijection at which the infimum is attained.
\end{proof}

\begin{proposition}
\label{PropDebMIN}
Assume $X$ and $Y$ are periodic point sets in $\R^n$. In the definition of $\EB$ (Definition \ref{DefDB})
$$
\EB(X,Y)=\inf_{\psi\in\Iso(\R^d)}d_B(X,\psi(Y)),
$$
the isometry realizing the infimum is attained (i.e., we can replace inf by min).
\end{proposition}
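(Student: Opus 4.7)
The plan mirrors the proof of Proposition~\ref{PropDbMIN}. Assume $r := \EB(X,Y) < \infty$, since otherwise every isometry trivially realises the infimum. Pick a sequence $\{\psi_i\} \subset \Iso(\R^d)$ with $d_B(X, \psi_i(Y)) \leq r + 1/i$, and, by Proposition~\ref{PropDbMIN}, bijections $f_i \colon X \to \psi_i(Y)$ achieving these bottleneck distances. Decompose $\psi_i(z) = A_i z + b_i$ with $A_i \in O(d)$ and $b_i \in \R^d$.

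The first step is to extract a subsequence of $\{\psi_i\}$ that converges to some isometry $\psi$. The orthogonal parts $A_i$ live in the compact group $O(d)$, but a priori the translations $b_i$ may be unbounded. The key observation is that precomposing $\psi_i$ with a translation by any $\lambda$ in the lattice $\Lambda_Y$ of $Y$ leaves the image $\psi_i(Y)$ unchanged while replacing $b_i$ by $b_i + A_i \lambda$. Since $A_i \Lambda_Y$ has the same covering radius as $\Lambda_Y$, we may choose $\lambda_i \in \Lambda_Y$ so that the reduced translation $b_i + A_i \lambda_i$ has norm at most $c(\Lambda_Y)$. Replacing $\{\psi_i\}$ accordingly, $\{(A_i, b_i)\}$ lies in the compact set $O(d) \times \overline{B(0, c(\Lambda_Y))}$, so a subsequence converges to some $(A, b)$; set $\psi(z) := Az + b$.

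To upgrade this to $d_B(X, \psi(Y)) \leq r$, define bijections $h_i := \psi_i^{-1} \circ f_i \colon X \to Y$. Since $\psi_i$ is an isometry, $\|\psi_i^{-1}(x) - h_i(x)\| = \|x - \psi_i(h_i(x))\| \leq r + 1/i$. For each fixed $x \in X$, the sequence $\{h_i(x)\}_i$ is contained in a ball of radius $\|x\| + c(\Lambda_Y) + r + 1$ around $0$, hence in a finite subset of $Y$ by discreteness. Running the alternating diagonal extraction from the proof of Proposition~\ref{PropDbMIN} verbatim on $\{h_i\}$ yields a bijection $h \colon X \to Y$ with $h_i(x) = h(x)$ eventually for each $x \in X$ and $h_i^{-1}(y) = h^{-1}(y)$ eventually for each $y \in Y$. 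Passing to the limit in $\|x - A_i h_i(x) - b_i\| \leq r + 1/i$ gives $\|x - \psi(h(x))\| \leq r$ for every $x$, so $f := \psi \circ h$ is a bijection $X \to \psi(Y)$ witnessing $d_B(X, \psi(Y)) \leq r$.

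The main obstacle is the uniform boundedness of the translations $b_i$; the rest is a routine compactness plus diagonal argument built on top of Proposition~\ref{PropDbMIN}. This boundedness is precisely where the periodicity of $Y$ enters through its lattice $\Lambda_Y$, since without it there would be no translation group modulo which to reduce the $b_i$, and $\Iso(\R^d)$ itself is not compact.
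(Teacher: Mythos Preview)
Your proof is correct and follows essentially the same approach as the paper's: reduce the translational parts of the $\psi_i$ modulo the lattice of $Y$ to force compactness in $\Iso(\R^d)$, extract a convergent subsequence $\psi_i\to\psi$, then run the diagonal extraction of Proposition~\ref{PropDbMIN} on the induced bijections $X\to Y$ and pass to the limit. The only cosmetic differences are that the paper writes $\psi_i=\rho_i\circ\tau_i$ and reduces $\tau_i$ into a fixed unit cell of $\Lambda_Y$, whereas you write $\psi_i(z)=A_iz+b_i$ and reduce $b_i$ into the ball of radius $c(\Lambda_Y)$; and the paper starts with bijections $f_i\colon X\to Y$ directly rather than first taking $f_i\colon X\to\psi_i(Y)$ and then pulling back via $\psi_i^{-1}$.
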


\begin{proof}
Assume $\EB(X,Y)=r$. Then for each $i \in \N$ there exist a bijection $f_i \colon X \to Y$ and $\psi_i \in \Iso(\R^n)$ such that $d(x,\psi_i(f_i(x)))\leq r + 1/i, \forall x \in X$. Each isometry $\psi_i$ decomposes uniquely as $\psi_i = \rho_i \circ \tau_i$, where $\tau_i$ is a translation and $\rho_i$ is a basepoint preserving isometry, i.e., $\rho_i \in O(n)$. As translations by the generators of the underlying lattice $\Lambda$ of $Y$ do not change $Y$, we may assume $\tau_i$ is a member of a chosen fixed unit cell of $\Lambda$. The closure of the unit cell and $O(n)$ are compact, thus by taking a subsequence we may assume $\tau_i$ converge towards translation $\tau$ by an element of the closure of the unit cell, and $\rho_i$ converge towards $\rho \in O(n)$. We claim $\psi = \rho \circ \tau$ is an isometry at which the infimum in the definition of $\EB(X,Y)$ is attained, i.e., $d_B(X,\psi(Y))=r$.

By the same argument as in the previous proposition we can construct a bijection $f\colon X 
\to Y$ such that for each $x\in X$ we have  $f(x)=f_i(x)$ for infinitely many indices $i$. It remains to show that $d(x,\psi(f(x)))\leq r$ for each $x\in X$. Let $\varepsilon > 0$ and fix $x \in X$. Choose large enough $i$ so that:
\begin{enumerate}
    \item $\varepsilon > 2/i$.
    \item $f(x)=f_i(x)$.
    \item $d(\psi_i(f(x)),\psi(f(x)))< \varepsilon/2$.
\end{enumerate}
Then 
$$
d(x,\psi(f(x))) \leq d(x,\psi_i(f(x))) + d(\psi_i(f(x)),\psi(f(x))) \leq r + 1/i +\varepsilon/2 = r + \varepsilon.
$$
As $\varepsilon>0$ is arbitrary we obtain $d(x,\psi(f(x)))\leq r, \forall x\in X$.
\end{proof}

\Addresses


\begin{thebibliography}{10}

\bibitem{Ant1} S. A. Antonyan, {\em The Gromov-Hausdorff hyperspace of a Euclidean space}, Advances in Mathematics, Volume 363, 2020, 106977, ISSN 0001-8708, {\tt  https://doi.org/10.1016/j.aim.2020.106977}.
\bibitem{Ant2} S. A. Antonyan, {\em The Gromov-Hausdorff hyperspace of a Euclidean space, II}, Advances in Mathematics, Volume 393, 2021, 108055, ISSN 0001-8708, {\tt https://doi.org/10.1016/j.aim.2021.108055}.


\bibitem{BatGar} D. Bate, A. L. Garcia-Pulido, {\em Bi-Lipschitz embeddings of the space of unordered m-tuples with a partial transportation metric}, {\tt arXiv:2212.01280}.

\bibitem{Bie1} L. Bieberbach, {\"Uber die Bewegungsgruppen der Euklidischen R\"aume I}, Mathematische Annalen, Volume 70
1911, 297--336, {\tt https://doi.org/10.1007/BF01564500}.

\bibitem{Bie2} L. Bieberbach, {\"Uber die Bewegungsgruppen der Euklidischen R\"aume II}, Mathematische Annalen, Volume 72
1912, 400--412, {\tt https://doi.org/10.1007/BF01456724}.


\bibitem{Blu} L. Blumenthal, {\em Theory and Applications of Distance Geometry}, At the Clarendon Press, Oxford, 1953.

\bibitem{Bog} O. V. Bogopolski, {\it Infinite commensurable hyperbolic groups are bi-Lipschitz equivalent}, Algebra and Logic, Volume 36, No. 3, 1997, 155--163.

\bibitem{BubWag} P. Bubenik, A. Wagner, {\em Embeddings of persistence diagrams into Hilbert spaces}, J. Appl. Comput. Topol. 4 (2020), no. 3, 339--351, DOI 10.1007/s41468-020-00056-w.
MR4130975.
\bibitem{CarBau} M. Carri\`ere, U. Bauer, {\em On the metric distorsion of embedding persistence diagrams into separable Hilbert spaces} in 35th International Symposium on Computational Geometry. Ed. Gill Barequet and Yusu Wang. LIPIcs. Leibniz International Proceedings in Informatics, 129. Wadern, Germany: Dagstuhl Publishing, 2019. 21:1--21:15.

\bibitem{CKMRV17}
H. Cohn, A. Kumar, S. Miller, D. Radchenko, M. Viazovska, {\it The sphere packing problem in dimension 24}, Ann. of Math. {\bf 185}:3 (2017), 1017--1033.

\bibitem{CDHT01} J. H. Conway, O. Delgado Friedrichs, D. T. Huson, W. P. Thurston, {\it On Three-Dimensional Space Groups}, Beitr\"age zur Algebra und Geometrie, Volume 42, No 2, 2001, 475--507.


\bibitem{DunOgu} M. Duneau, C. Oguey, {\em Bounded interpolations between lattices}, 1991 J. Phys. A: Math. Gen. 24 461.

\bibitem{DraGonLafYu} A. N. Dranishnikov, G. Gong, V. Lafforgue, and G. Yu, {\em Uniform embeddings into Hilbert space and a question of Gromov}, Canad. Math. Bull. 45 (2002), no. 1, 60--70, DOI 10.4153/CMB2002-006-9. MR1884134.


\bibitem{EdeHeiKurSmiWin} H. Edelsbrunner, T. Heiss, V. Kurlin, P. Smith, M. Wintraecken, {\em The density
fingerprint of a periodic point set}. In: Proceedings of SoCG (2021).
\bibitem{Edw} D. A. Edwards, {\em The Structure of Superspace}, published in: Studies in Topology, Academic Press, 1975.

\bibitem{FG-pisot} D. Frettl\"oh, A. Garber, {\it Pisot substitution sequences, one dimensional cut-and-project sets and bounded remainder sets with fractal boundary}, Indagationes Mathematicae, Volume 29, Issue 4, 2018, 1114--1130.

\bibitem{FG-weighted} D. Frettl\"oh, A. Garber, {\it Weighted $1\times 1$  Cut-and-Project Sets in Bounded Distance to a Lattice}, Discrete \& Computational Geometry, Volume 62, 2019, 649--661.

\bibitem{fre22}
D. Frettl\"oh, A. Garber, L. Sadun, {\em Number of bounded distance equivalence classes in hulls of repetitive Delone sets}, Disc. \& Cont. Dyn. Sys. 42 (2022), no. 3, 1403--1414.

\bibitem{Gro_GH_fr} M. Gromov, {\em Structures M\'etriques pour les Vari\'et\'es Riemanniennes} (J. Lafontaine and P. Pansu, eds.), Textes Math\'ematiques, 1, CEDIC, Paris, 1981.

\bibitem{Grom}
M. Gromov, {\em Asymptotic invariants for infinite groups}, in
Geometric Group Theory, vol. 2, 1--295, G. Niblo and M. Roller,
eds., Cambridge University Press, 1993.

\bibitem{Hal05}
T. Hales, {\it A proof of the Kepler conjecture}, Ann. of Math. {\bf 162} (2005), 1065--1185.

\bibitem{Haynes} A. Haynes, {\it Equivalence classes of codimension-one cut-and-project nets}, Ergodic Theory and Dynamical Systems, Volume 36, Issue 3, 2016, 816--831, {\tt https://doi.org/10.1017/etds.2014.90}.

\bibitem{HayKoi} A. Haynes, H. Koivusalo, {\it Constructing bounded remainder sets and cut-and-project sets which are bounded distance to lattices}, Isr. J. Math., Volume 212, Issue 1, 2015, 189--201.

\bibitem{HKK} A. Haynes, M. Kelly, H. Koivusalo, {\it  Constructing bounded remainder sets and cut-and-project sets which are bounded distance to lattices, II}, Indag. Math., Volume 28, Issue 1, 2017, 138--144.

\bibitem{Isb}  J.R. Isbell, {\em Uniform Spaces}, American Mathematical Society, 1964.

\bibitem{Lac} M. Laczkovich, {\it Uniformly spread discrete sets in $\mathbb R^d$}, J. Lond. Math. Soc., Volume 46, No. 1, 1992, 39--57.


\bibitem{Laf} V. Lafforgue, {\em Un renforcement de la propri\'{e}t\'{e} (T)}, Duke Math. J. 143 (2008), no. 3, 559--602. 
\bibitem{MajVitWen} S. Majhi, J. Vitter, C. Wenk, {\em Approximating Gromov-Hausdorff Distance in Euclidean Space}, preprint, {\tt arXiv:1912.13008v3}.

\bibitem{ManMar22} Y. Manin, M. Marcoli, {\it Computability questions in the sphere packing problem}, preprint, 2022, \url{https://arxiv.org/abs/2212.05119v1}.

\bibitem{Mem07} F. M\'emoli, {\em On the use of Gromov-Hausdorff distances for shape comparison}, Point Based Graphics, 2 (2007).

\bibitem{Mem} F. M\'emoli, {\em Gromov-Hausdorff distances in Euclidean spaces}, in ``2008 IEEE Computer Society Conference on Computer Vision and Pattern Recognition Workshops'', (Anchorage, AK, USA), pp. 1--8, IEEE, June 2008.

\bibitem{MemSap1} F. M\'emoli, G. Sapiro, {\em Comparing point clouds}, in: SGP '04: Proceedings of the 2004 Eurographics/ACM SIGGRAPH Symposium on Geometry Processing, pp. 32--40. ACM, New York (2004).
	\bibitem{MemSap2} F. M\'emoli, G. Sapiro, {\em A theoretical and computational framework for isometry invariant recognition	of point cloud data}, Found. Comput. Math. 5(3), 313--347 (2005).
\bibitem{MitVir} A. Mitra, \v{Z}. Virk, {\em The space of persistence diagrams of $n$ points coarsely embeds into Hilbert space}, Proc. Amer. Math. Soc. 149 (2021), 2693--2703.

\bibitem{Rad} C. Radin, {\it The open mathematics of crystallization}, Notices AMS, Volume 64, No. 6, 2017, 551--556.


\bibitem{Roe} J. Roe, \emph{Lectures on Coarse Geometry}, 
University lecture Series, Am. Math. Soc., 2003.

\bibitem{Sen} M. Senechal, {\it Quasicrystals and Geometry}, Cambridge University Press, Cambridge, 1995.

\bibitem{Sch08} A. Sch\"urmann, {\it Computational Geometry of Positive Definite Quadratic Forms}, University lecture Series, Am. Math. Soc., 2008.

\bibitem{Sol} Y. Solomon, {\it A simple condition for bounded displacement}, J. Math. Anal. Appl., Volume 414, 2014, 134--148.

\bibitem{Via17}
M. Viazovska, {\it The sphere packing problem in dimension 8}, Ann. of Math. {\bf 185}:3 (2017), 991--1015.


\bibitem{WeiYamZav} T. Weighill, T. Yamauchi, N. Zava, {\em Coarse infinite-dimensionality of hyperspaces of finite subsets}, Eur. J. Math., to appear.

\bibitem{Yu} G. Yu, {\em The coarse Baum-Connes conjecture for spaces which admit a uniform embedding into Hilbert space}, Invent. Math. 139 (2000), no. 1, 201--240, DOI
10.1007/s002229900032. MR1728880

\bibitem{Za} N. Zava, {\em Cowellpoweredness and closure operators in categories of coarse spaces}, Topology Appl. 268 (2019), 106899.

\bibitem{Za_GH} N. Zava, {\em Non-embeddability results of the Gromov-Hausdorff space}, preprint.

\end{thebibliography}
\end{document}